\begin{document}
\theoremstyle{plain}
\newtheorem{thm}{Theorem}[section]
\newtheorem*{thm1}{Theorem 1}
\newtheorem*{thm2}{Theorem 2}
\newtheorem{lemma}[thm]{Lemma}
\newtheorem{lem}[thm]{Lemma}
\newtheorem{cor}[thm]{Corollary}
\newtheorem{pro}[thm]{Proposition}
\newtheorem{propose}[thm]{Proposition}
\newtheorem{variant}[thm]{Variant}
\theoremstyle{definition}
\newtheorem{notations}[thm]{Notations}
\newtheorem{rem}[thm]{Remark}
\newtheorem{rmk}[thm]{Remark}
\newtheorem{rmks}[thm]{Remarks}
\newtheorem{defi}[thm]{Definition}
\newtheorem{exe}[thm]{Example}
\newtheorem{claim}[thm]{Claim}
\newtheorem{ass}[thm]{Assumption}
\newtheorem{prodefi}[thm]{Proposition-Definition}
\newtheorem{que}[thm]{Question}
\numberwithin{equation}{section}
\newcounter{elno}                
\def\points{\list
{\hss\llap{\upshape{(\roman{elno})}}}{\usecounter{elno}}}
\let\endpoints=\endlist


\catcode`\@=11
%
%
\def\opn#1#2{\def#1{\mathop{\kern0pt\fam0#2}\nolimits}} 
\def\bold#1{{\bf #1}}%
\def\underrightarrow{\mathpalette\underrightarrow@}
\def\underrightarrow@#1#2{\vtop{\ialign{$##$\cr
 \hfil#1#2\hfil\cr\noalign{\nointerlineskip}%
 #1{-}\mkern-6mu\cleaders\hbox{$#1\mkern-2mu{-}\mkern-2mu$}\hfill
 \mkern-6mu{\to}\cr}}}
\let\underarrow\underrightarrow
\def\underleftarrow{\mathpalette\underleftarrow@}
\def\underleftarrow@#1#2{\vtop{\ialign{$##$\cr
 \hfil#1#2\hfil\cr\noalign{\nointerlineskip}#1{\leftarrow}\mkern-6mu
 \cleaders\hbox{$#1\mkern-2mu{-}\mkern-2mu$}\hfill
 \mkern-6mu{-}\cr}}}
%
%

%
\def\:{\colon}
\let\oldtilde=\tilde
\def\tilde#1{\mathchoice{\widetilde{#1}}{\widetilde{#1}}%
{\indextil{#1}}{\oldtilde{#1}}}
\def\indextil#1{\lower2pt\hbox{$\textstyle{\oldtilde{\raise2pt%
\hbox{$\scriptstyle{#1}$}}}$}}
\def\pnt{{\raise1.1pt\hbox{$\textstyle.$}}}
%

%
\let\amp@rs@nd@\relax
\newdimen\ex@
\ex@.2326ex
\newdimen\bigaw@
\newdimen\minaw@
\minaw@16.08739\ex@
\newdimen\minCDaw@
\minCDaw@2.5pc
\newif\ifCD@
\def\minCDarrowwidth#1{\minCDaw@#1}
\newenvironment{CD}{\@CD}{\@endCD}
\def\@CD{\def\A##1A##2A{\llap{$\vcenter{\hbox
 {$\scriptstyle##1$}}$}\Big\uparrow\rlap{$\vcenter{\hbox{%
$\scriptstyle##2$}}$}&&}%
\def\V##1V##2V{\llap{$\vcenter{\hbox
 {$\scriptstyle##1$}}$}\Big\downarrow\rlap{$\vcenter{\hbox{%
$\scriptstyle##2$}}$}&&}%
\def\={&\hskip.5em\mathrel
 {\vbox{\hrule width\minCDaw@\vskip3\ex@\hrule width
 \minCDaw@}}\hskip.5em&}%
\def\verteq{\Big\Vert&&}%
\def\noarr{&&}%
\def\vspace##1{\noalign{\vskip##1\relax}}\relax\let\amp@rs@nd@&\iffalse}\fi
 \CD@true\vcenter\bgroup\relax\let\\=\cr\iffalse}\fi\tabskip\z@skip\baselineskip20\ex@
 \lineskip3\ex@\lineskiplimit3\ex@\halign\bgroup
 &\hfill$\m@th##$\hfill\cr}
\def\@endCD{\cr\egroup\egroup}
%
\def\>#1>#2>{\amp@rs@nd@\setbox\z@\hbox{$\scriptstyle
 \;{#1}\;\;$}\setbox\@ne\hbox{$\scriptstyle\;{#2}\;\;$}\setbox\tw@
 \hbox{$#2$}\ifCD@
 \global\bigaw@\minCDaw@\else\global\bigaw@\minaw@\fi
 \ifdim\wd\z@>\bigaw@\global\bigaw@\wd\z@\fi
 \ifdim\wd\@ne>\bigaw@\global\bigaw@\wd\@ne\fi
 \ifCD@\hskip.5em\fi
 \ifdim\wd\tw@>\z@
 \mathrel{\mathop{\hbox to\bigaw@{\rightarrowfill}}\limits^{#1}_{#2}}\else
 \mathrel{\mathop{\hbox to\bigaw@{\rightarrowfill}}\limits^{#1}}\fi
 \ifCD@\hskip.5em\fi\amp@rs@nd@}
\def\<#1<#2<{\amp@rs@nd@\setbox\z@\hbox{$\scriptstyle
 \;\;{#1}\;$}\setbox\@ne\hbox{$\scriptstyle\;\;{#2}\;$}\setbox\tw@
 \hbox{$#2$}\ifCD@
 \global\bigaw@\minCDaw@\else\global\bigaw@\minaw@\fi
 \ifdim\wd\z@>\bigaw@\global\bigaw@\wd\z@\fi
 \ifdim\wd\@ne>\bigaw@\global\bigaw@\wd\@ne\fi
 \ifCD@\hskip.5em\fi
 \ifdim\wd\tw@>\z@
 \mathrel{\mathop{\hbox to\bigaw@{\leftarrowfill}}\limits^{#1}_{#2}}\else
 \mathrel{\mathop{\hbox to\bigaw@{\leftarrowfill}}\limits^{#1}}\fi
 \ifCD@\hskip.5em\fi\amp@rs@nd@}
%
%
\newenvironment{CDS}{\@CDS}{\@endCDS}
\def\@CDS{\def\A##1A##2A{\llap{$\vcenter{\hbox
 {$\scriptstyle##1$}}$}\Big\uparrow\rlap{$\vcenter{\hbox{%
$\scriptstyle##2$}}$}&}%
\def\V##1V##2V{\llap{$\vcenter{\hbox
 {$\scriptstyle##1$}}$}\Big\downarrow\rlap{$\vcenter{\hbox{%
$\scriptstyle##2$}}$}&}%
\def\={&\hskip.5em\mathrel
 {\vbox{\hrule width\minCDaw@\vskip3\ex@\hrule width
 \minCDaw@}}\hskip.5em&}
\def\verteq{\Big\Vert&}
\def\novarr{&}
\def\noharr{&&}
\def\SE##1E##2E{\slantedarrow(0,18)(4,-3){##1}{##2}&}
\def\SW##1W##2W{\slantedarrow(24,18)(-4,-3){##1}{##2}&}
\def\NE##1E##2E{\slantedarrow(0,0)(4,3){##1}{##2}&}
\def\NW##1W##2W{\slantedarrow(24,0)(-4,3){##1}{##2}&}
\def\slantedarrow(##1)(##2)##3##4{%
\thinlines\unitlength1pt\lower 6.5pt\hbox{\begin{picture}(24,18)%
\put(##1){\vector(##2){24}}%
\put(0,8){$\scriptstyle##3$}%
\put(20,8){$\scriptstyle##4$}%
\end{picture}}}
\def\vspace##1{\noalign{\vskip##1\relax}}\relax\let\amp@rs@nd@&\iffalse}\fi
 \CD@true\vcenter\bgroup\relax\let\\=\cr\iffalse}\fi\tabskip\z@skip\baselineskip20\ex@
 \lineskip3\ex@\lineskiplimit3\ex@\halign\bgroup
 &\hfill$\m@th##$\hfill\cr}
\def\@endCDS{\cr\egroup\egroup}
%
\newdimen\TriCDarrw@
\newif\ifTriV@
\newenvironment{TriCDV}{\@TriCDV}{\@endTriCD}
\newenvironment{TriCDA}{\@TriCDA}{\@endTriCD}
\def\@TriCDV{\TriV@true\def\TriCDpos@{6}\@TriCD}
\def\@TriCDA{\TriV@false\def\TriCDpos@{10}\@TriCD}
\def\@TriCD#1#2#3#4#5#6{%
\setbox0\hbox{$\ifTriV@#6\else#1\fi$}
\TriCDarrw@=\wd0 \advance\TriCDarrw@ 24pt
\advance\TriCDarrw@ -1em
\def\SE##1E##2E{\slantedarrow(0,18)(2,-3){##1}{##2}&}
\def\SW##1W##2W{\slantedarrow(12,18)(-2,-3){##1}{##2}&}
\def\NE##1E##2E{\slantedarrow(0,0)(2,3){##1}{##2}&}
\def\NW##1W##2W{\slantedarrow(12,0)(-2,3){##1}{##2}&}
\def\slantedarrow(##1)(##2)##3##4{\thinlines\unitlength1pt
\lower 6.5pt\hbox{\begin{picture}(12,18)%
\put(##1){\vector(##2){12}}%
\put(-4,\TriCDpos@){$\scriptstyle##3$}%
\put(12,\TriCDpos@){$\scriptstyle##4$}%
\end{picture}}}
\def\={\mathrel {\vbox{\hrule
   width\TriCDarrw@\vskip3\ex@\hrule width
   \TriCDarrw@}}}
\def\>##1>>{\setbox\z@\hbox{$\scriptstyle
 \;{##1}\;\;$}\global\bigaw@\TriCDarrw@
 \ifdim\wd\z@>\bigaw@\global\bigaw@\wd\z@\fi
 \hskip.5em
 \mathrel{\mathop{\hbox to \TriCDarrw@
{\rightarrowfill}}\limits^{##1}}
 \hskip.5em}
\def\<##1<<{\setbox\z@\hbox{$\scriptstyle
 \;{##1}\;\;$}\global\bigaw@\TriCDarrw@
 \ifdim\wd\z@>\bigaw@\global\bigaw@\wd\z@\fi
 \mathrel{\mathop{\hbox to\bigaw@{\leftarrowfill}}\limits^{##1}}
 }
 \CD@true\vcenter\bgroup\relax\let\\=\cr\iffalse}\fi
 \tabskip\z@skip\baselineskip20\ex@
 \lineskip3\ex@\lineskiplimit3\ex@
 \ifTriV@
 \halign\bgroup
 &\hfill$\m@th##$\hfill\cr
#1&\multispan3\hfill$#2$\hfill&#3\\
&#4&#5\\
&&#6\cr\egroup%
\else
 \halign\bgroup
 &\hfill$\m@th##$\hfill\cr
&&#1\\%
&#2&#3\\
#4&\multispan3\hfill$#5$\hfill&#6\cr\egroup
\fi}
\def\@endTriCD{\egroup}
\newcommand{\mc}{\mathcal}
\newcommand{\mb}{\mathbb}
\newcommand{\surj}{\twoheadrightarrow}
\newcommand{\inj}{\hookrightarrow}
\newcommand{\zar}{{\rm zar}}
\newcommand{\an}{{\rm an}}
\newcommand{\red}{{\rm red}}
\newcommand{\codim}{{\rm codim}}
\newcommand{\rank}{{\rm rank}}
\newcommand{\Ker}{{\rm Ker \ }}
\newcommand{\Pic}{{\rm Pic}}
\newcommand{\Div}{{\rm Div}}
\newcommand{\Hom}{{\rm Hom}}
\newcommand{\im}{{\rm im}}
\newcommand{\Spec}{{\rm Spec \,}}
\newcommand{\Frac}{{\rm Frac \,}}
\newcommand{\Sing}{{\rm Sing}}
\newcommand{\sing}{{\rm sing}}
\newcommand{\reg}{{\rm reg}}
\newcommand{\Char}{{\rm char}}
\newcommand{\Tr}{{\rm Tr}}
\newcommand{\ord}{{\rm ord}}
\newcommand{\id}{{\rm id}}
\newcommand{\Gal}{{\rm Gal}}
\newcommand{\Min}{{\rm Min \ }}
\newcommand{\Max}{{\rm Max \ }}
\newcommand{\Alb}{{\rm Alb}\,}
\newcommand{\GL}{{\rm GL}\,}        
\newcommand{\PGL}{{\rm PGL}\,}
\newcommand{\Bir}{{\rm Bir}}
\newcommand{\Aut}{{\rm Aut}}
\newcommand{\End}{{\rm End}}
\newcommand{\Per}{{\rm Per}\,}
\newcommand{\ie}{{\it i.e.\/},\ }
\newcommand{\niso}{\not\cong}
\newcommand{\nin}{\not\in}
\newcommand{\soplus}[1]{\stackrel{#1}{\oplus}}
\newcommand{\by}[1]{\stackrel{#1}{\rightarrow}}
\newcommand{\longby}[1]{\stackrel{#1}{\longrightarrow}}
\newcommand{\vlongby}[1]{\stackrel{#1}{\mbox{\large{$\longrightarrow$}}}}
\newcommand{\ldownarrow}{\mbox{\Large{\Large{$\downarrow$}}}}
\newcommand{\lsearrow}{\mbox{\Large{$\searrow$}}}
\renewcommand{\d}{\stackrel{\mbox{\scriptsize{$\bullet$}}}{}}
\newcommand{\dlog}{{\rm dlog}\,}    
\newcommand{\longto}{\longrightarrow}
\newcommand{\vlongto}{\mbox{{\Large{$\longto$}}}}
\newcommand{\limdir}[1]{{\displaystyle{\mathop{\rm lim}_{\buildrel\longrightarrow\over{#1}}}}\,}
\newcommand{\liminv}[1]{{\displaystyle{\mathop{\rm lim}_{\buildrel\longleftarrow\over{#1}}}}\,}
\newcommand{\norm}[1]{\mbox{$\parallel{#1}\parallel$}}
\newcommand{\boxtensor}{{\Box\kern-9.03pt\raise1.42pt\hbox{$\times$}}}
\newcommand{\into}{\hookrightarrow}
\newcommand{\image}{{\rm image}\,}
\newcommand{\Lie}{{\rm Lie}\,}      
\newcommand{\CM}{\rm CM}
\newcommand{\sext}{\mbox{${\mathcal E}xt\,$}}  
\newcommand{\shom}{\mbox{${\mathcal H}om\,$}}  
\newcommand{\coker}{{\rm coker}\,}  
\newcommand{\sm}{{\rm sm}}
\newcommand{\pgcd}{\text{pgcd}}
\newcommand{\tensor}{\otimes}
\renewcommand{\iff}{\mbox{ $\Longleftrightarrow$ }}
\newcommand{\supp}{{\rm supp}\,}
\newcommand{\ext}[1]{\stackrel{#1}{\wedge}}
\newcommand{\onto}{\mbox{$\,\>>>\hspace{-.5cm}\to\hspace{.15cm}$}}
\newcommand{\propsubset}
{\mbox{$\textstyle{
\subseteq_{\kern-5pt\raise-1pt\hbox{\mbox{\tiny{$/$}}}}}$}}
\newcommand{\sA}{{\mathcal A}}
\newcommand{\sB}{{\mathcal B}}
\newcommand{\sC}{{\mathcal C}}
\newcommand{\sD}{{\mathcal D}}
\newcommand{\sE}{{\mathcal E}}
\newcommand{\sF}{{\mathcal F}}
\newcommand{\sG}{{\mathcal G}}
\newcommand{\sH}{{\mathcal H}}
\newcommand{\sI}{{\mathcal I}}
\newcommand{\sJ}{{\mathcal J}}
\newcommand{\sK}{{\mathcal K}}
\newcommand{\sL}{{\mathcal L}}
\newcommand{\sM}{{\mathcal M}}
\newcommand{\sN}{{\mathcal N}}
\newcommand{\sO}{{\mathcal O}}
\newcommand{\sP}{{\mathcal P}}
\newcommand{\sQ}{{\mathcal Q}}
\newcommand{\sR}{{\mathcal R}}
\newcommand{\sS}{{\mathcal S}}
\newcommand{\sT}{{\mathcal T}}
\newcommand{\sU}{{\mathcal U}}
\newcommand{\sV}{{\mathcal V}}
\newcommand{\sW}{{\mathcal W}}
\newcommand{\sX}{{\mathcal X}}
\newcommand{\sY}{{\mathcal Y}}
\newcommand{\sZ}{{\mathcal Z}}
\newcommand{\A}{{\mathbb A}}
\newcommand{\B}{{\mathbb B}}
\newcommand{\C}{{\mathbb C}}
\newcommand{\D}{{\mathbb D}}
\newcommand{\E}{{\mathbb E}}
\newcommand{\F}{{\mathbb F}}
\newcommand{\G}{{\mathbb G}}
\newcommand{\HH}{{\mathbb H}}
\newcommand{\I}{{\mathbb I}}
\newcommand{\J}{{\mathbb J}}
\newcommand{\M}{{\mathbb M}}
\newcommand{\N}{{\mathbb N}}
\renewcommand{\P}{{\mathbb P}}
\newcommand{\Q}{{\mathbb Q}}
\newcommand{\R}{{\mathbb R}}
\newcommand{\T}{{\mathbb T}}
\newcommand{\U}{{\mathbb U}}
\newcommand{\V}{{\mathbb V}}
\newcommand{\W}{{\mathbb W}}
\newcommand{\X}{{\mathbb X}}
\newcommand{\Y}{{\mathbb Y}}
\newcommand{\Z}{{\mathbb Z}}

\newcommand{\fix}{\mathrm{Fix}}

\title[]{Periodic points of birational maps on projective surfaces }

\author{Xie Junyi}
\address{Centre de Math\'ematiques Laurent Schwartz \'Ecole Polytechnique, 91128, Palaiseau
Cedex, France}

\email{jxie@clipper.ens.fr}

\date{\today}

\maketitle

\begin{abstract}
We classify birational maps of projective smooth surfaces  whose non-critical periodic points are Zariski dense. In particular, we show that if the first dynamical degree is greater than one, then the periodic points are Zariski dense.
\end{abstract}

\section{Introduction}
\bibliographystyle{plain}
Hrushovski and Fakhruddin \cite{fa} recently proved by purely algebraic methods that the set of periodic points of a polarized endomorphism of degree at least $2$ of a projective variety over any algebraically closed field is Zariski dense. In this article, we give a complete classification of birational surface maps whose non-critical points are Zariski dense.

In order to state our main result, we need to recall some basic notions related to birational maps of surfaces.  Let $X$ be a projective surface, $L$ be an ample line bundle on $X$ and $f:X\dashrightarrow X$ be any birational map.  We set $\deg_L(f)=(f^*L\cdot L)$ and call it the degree of $f$ respect to $L$. One can show (\cite{boucksomfavrejonsson}) that $\deg_L (f^{m+n})\leq 2\deg_L(f^m)\deg_L(f^n)$ for all $n,m \geq 0$, so that the limit  $$\lambda_1(f):=\lim_{n \rightarrow \infty}\deg_L(f^n)^{1/n}\geq 1$$ is well defined. It is independent on the choice of $L$ and we call it the first dynamical degree of $f$. It is constant on the conjugacy class of $f$ in the group of birational transformations of $X$.
\begin{thm}\label{class}
Let $X$ be a projective smooth surface over an algebraically closed field of characteristic different from $2$ and $3$, $L$ be an ample line bundle and $f$ be a birational map of $X$. We denote by $\mathcal{P}$ the set of non-critical periodic points of $f$. Then we are in one of the following  four cases.
\begin{points}
\item If $\lambda_1(f)>1$, then $\mathcal{P}$ is Zariski dense.
\item If $\lambda_1(f)=1$ and $\deg(f^n)\sim cn,$ where $c>0$, then $\mathcal{P}$ is Zariski dense if and only if for some $N>0$, $f^N$ preserves a rational fibration and acts on the base as identity.

\item If $\lambda_1(f)=1$ and $\deg(f^n)\sim cn^2,$ where $c>0$, then $\mathcal{P}$ is Zariski dense if and only if for some $N>0$, $f^N$ preserves an elliptic fibration and acts on the base as identity.
\item If $\lambda_1(f)=1$, and $\deg(f^n)$ is bounded, then $\mathcal{P}$ is Zariski dense if and only if there is an integer $N>0$ such that $f^N=\id$.

\end{points}
\end{thm}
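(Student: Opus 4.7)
The plan is to treat the four cases separately, using the Diller--Favre classification, which says that the sequence $\deg_L(f^n)$ is either bounded, grows linearly, grows quadratically, or grows exponentially --- matching cases (iv), (ii), (iii), and (i) respectively. In each polynomial-growth case, after passing to a smooth model and an iterate, $f$ either becomes an automorphism (case (iv)) or preserves a rational, resp.\ elliptic, fibration (cases (ii), (iii)); this structural input reduces these cases to an analysis of fiber-preserving maps and automorphisms. Case (i) needs a genuinely dynamical argument.

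For case (i), I would first replace $X$ by a model on which $f$ is algebraically stable, so that $(f^n)^* = (f^*)^n$ on $N^1(X)$. The action of $f^*$ then has spectral radius $\lambda_1(f)>1$ realized on a one-dimensional eigenspace spanned by a nef class $\theta_+$, while the remaining eigenvalues are bounded in modulus by $\lambda_1(f)^{1/2}$ via Hodge index. The key step is a Lefschetz-type intersection count on the graph $\Gamma_{f^n}\subset X\times X$: its class is controlled by $(f^n)^*\theta_+$ and the intersection $\Gamma_{f^n}\cdot\Delta_X$ grows like $\lambda_1(f)^n$. One then bounds the contribution from curves fixed pointwise by some iterate and from excess intersection at indeterminacy centers, showing both are subexponential; the residual isolated non-critical periodic points form a set whose size is $\sim\lambda_1(f)^n$. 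Because the Zariski closure $\overline{\mathcal{P}}$ is $f$-invariant, if it were a proper subvariety $V\subsetneq X$ then $f|_V$ would be a birational self-map of a finite union of curves, whose periodic points grow only polynomially; this contradicts the exponential growth, so $\mathcal{P}$ is Zariski dense.

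Cases (ii) and (iii) reduce, via the invariant fibration $\pi:X\to B$, to analysis of the induced map $\bar f$ on $B$: if $\bar f$ has infinite order then periodic points of $f$ project into the finite periodic set of $\bar f$ and cannot be Zariski dense, while if $\bar f$ is identity for some iterate then $f$ acts on the generic fiber as a non-trivial $\P^1$-automorphism (case (ii)) or translation (case (iii)), and torsion points (resp.\ torsion sections) provide a Zariski dense set of periodic points over a Zariski open in the base. For case (iv), after birational conjugation and passing to an iterate one is dealing with an automorphism $f^N$ of a minimal projective smooth surface, and running through the Enriques--Kodaira classification one checks that density of periodic points forces $f^N=\id$; the assumption $\Char\neq 2,3$ enters at this step to exclude pathologies (wild Enriques surfaces, quasi-elliptic fibrations) in small characteristic. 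The main obstacle is case (i): in positive characteristic one cannot invoke the complex-analytic equidistribution of periodic points with respect to the measure of maximal entropy, so the bound on non-isolated contributions to $\Gamma_{f^n}\cdot\Delta_X$ must be produced purely algebraically and uniformly in $n$, and one must further ensure that the isolated fixed points counted are genuine non-critical periodic points of $f$ on $X$, not artifacts of the model chosen to achieve algebraic stability.
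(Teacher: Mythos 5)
Your four-way decomposition via the Diller--Favre classification is the same as the paper's, and the reductions in cases (ii)--(iv) are in the right spirit; but your case (i) has a genuine gap at its central step. You propose a Lefschetz-type count of $\Gamma_{f^n}\cdot\Delta_X$, which requires showing that the contributions from excess intersection at indeterminacy centers and from pointwise-fixed curves are subexponential in $n$. You flag this yourself as ``the main obstacle,'' but you do not supply the argument, and it is not a routine technicality: for a general birational map this control is precisely what is not known, even over $\mathbb{C}$ --- it is why Diller, Dujardin and Guedj needed the hypothesis that $I(f^{-1})$ does not cluster near $I(f)$, and why the paper abandons fixed-point counting entirely in case (i). The paper's actual route is arithmetic: spread $X$ and $f$ over a finitely generated $\mathbb{Z}$-algebra; prove via the isometric action on the hyperbolic space $\mathbb{H}(\mathfrak{X})$ of $\mathbb{L}^2$-classes on the Riemann--Zariski space (Theorem \ref{f^2>2^18f1} and Theorem \ref{generally}) that $\lambda_1$ is lower semicontinuous in families, so some closed fiber has $\lambda_1(f_p)>1$; apply Hrushovski's theorem on intersections with Frobenius graphs to get Zariski density of non-critical periodic points over $\overline{\mathbb{F}_p}$; and lift to the generic fiber using Cantat's finiteness of periodic curves (Theorem \ref{finite}) together with the DVR dimension argument of Lemma \ref{dvr}. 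A fixed-point-formula count is carried out in the paper only for \emph{automorphisms} in characteristic $0$ (Theorem \ref{cisop}), where there is no indeterminacy and Saito's formula applies.

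On the remaining cases: in case (iii) you assert that ``torsion sections provide a Zariski dense set of periodic points,'' but a priori the fiberwise translation could be non-torsion on all but finitely many fibers, leaving no periodic points there; the content of Proposition \ref{n^2} is exactly that this scenario forces $(f^{n*}L\cdot L)$ to stay bounded (via a local computation of the intersection of a section with its iterates at the finitely many fixed points), contradicting quadratic growth --- you are assuming what must be proved. Likewise in case (ii) the linear degree growth must be used to show that the trace invariant of the fiberwise M\"obius transformation is non-constant along the base (Proposition \ref{linearg}); a constant non-torsion multiplier gives only finitely many periodic curves. Finally, case (iv) does not require the Enriques--Kodaira classification: the paper observes that $f$ lies in the group of automorphisms fixing an ample class, which sits inside a Hilbert scheme of finite type, and a Noetherian argument on the increasing fixed-point loci gives $f^N=\id$ (Proposition \ref{pic0}); the hypothesis $\mathrm{char}\neq 2,3$ is used in the structure Theorem \ref{fadi} and in the elliptic-fibration analysis of case (iii), not in case (iv) as you suggest.
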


The most interesting case in the previous theorem
 is case (i). We actually prove this result over a field of arbitrary characteristic.

\begin{thm}\label{main1}
Let $X$ be a projective surface over an algebraically closed field $\mathbf{k}$, and $f:X\dashrightarrow X$ be a
birational map. If
$\lambda_1(f)>1$ then the set of non-critical periodic points is
Zariski dense.

\end{thm}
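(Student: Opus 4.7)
The plan is to establish an exponential lower bound on the number of non-critical periodic points of $f^n$ via an intersection-theoretic Lefschetz-type count on an algebraically stable model, and then to deduce Zariski density by observing that a proper Zariski-closed subset of $X$ can support only sub-exponentially many such points.

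By the Diller--Favre theorem, whose proof is purely intersection-theoretic and valid in arbitrary characteristic, we may replace $X$ by a birational model on which $f$ is algebraically stable, i.e., $(f^n)^* = (f^*)^n$ on $N^1(X)_{\R}$. Zariski density of the non-critical periodic set is preserved under such a reduction, since a birational morphism of smooth surfaces is an isomorphism outside a proper closed subset. The Perron--Frobenius theorem combined with the Hodge index theorem then produces nef classes $\theta^{\pm} \in N^1(X)_{\R}$ with $f^*\theta^+ = \lambda_1(f)\, \theta^+$, $f_*\theta^- = \lambda_1(f)\, \theta^-$, and $\theta^+ \cdot \theta^- > 0$.

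The key quantity is the intersection number $\Gamma_{f^n} \cdot \Delta$ in $X \times X$, where $\Gamma_{f^n}$ is the closure of the graph of $f^n$ and $\Delta$ the diagonal. Using the K\"unneth decomposition of $N^1(X\times X)_{\R}$ together with algebraic stability, this intersection is $\lambda_1(f)^n + O(1)$. The supporting $0$-cycle decomposes into three contributions: (a) isolated non-critical fixed points, each with bounded local intersection multiplicity; (b) fixed points of $f^n$ lying in the indeterminacy or critical locus; and (c) curves of fixed points of $f^n$. One shows that the contributions of types (b) and (c) are $o(\lambda_1(f)^n)$. For (b) this uses algebraic stability to control the forward and backward orbits of the indeterminacy locus; for (c) one argues that an invariant irreducible curve $C$ contributes at most $d_C^n + O(1)$ fixed points to $f^n$, where $d_C = \deg(f|_C)$, with $d_C < \lambda_1(f)$ coming from the comparison $f^*[C]\cdot\theta^- = [C]\cdot f_*\theta^- = \lambda_1(f)\,([C]\cdot\theta^-)$ and effectivity of the exceptional part of $f^*[C]$.

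If $\mathcal{P}$ failed to be Zariski dense, its closure would be a finite union of curves and points which we may assume, after passing to an iterate, are all $f$-invariant. The bounds underlying (c), summed over these finitely many components, give a strictly sub-exponential upper bound on $|\mathcal{P}\cap\fix(f^n)|$, contradicting the exponential lower bound from (a). The main obstacle is step (b): at an indeterminacy point the local intersection multiplicity of $\Gamma_{f^n}$ with $\Delta$ can a priori be very large, and it is precisely the algebraic stability of $f$ together with a careful analysis of the orbits of $I(f)$ and $I(f^{-1})$ that guarantees the total contribution is strictly sub-exponential in $n$.
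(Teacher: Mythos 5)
There is a genuine gap, and it sits exactly where you flag it: step (b). Your proposal is a Lefschetz-type count of $\Gamma_{f^n}\cdot\Delta$, but for a \emph{birational} (non-regular) map the local contribution of this intersection at points lying over $I(f^n)$ is not controlled by algebraic stability. Algebraic stability only says that no curve is contracted into $I(f)$ under forward iteration, which governs the action on $N^1$; it says nothing about the local intersection multiplicities of the closure of the graph with the diagonal at indeterminacy points, and these can a priori swallow the entire main term $\lambda_1(f)^n$. This is precisely the obstruction that forced Diller--Dujardin--Guedj and Bedford et al.\ to impose extra hypotheses (that $I(f^{-1})$ does not cluster near $I(f)$) even over $\mathbb{C}$ with analytic tools at hand; writing ``a careful analysis of the orbits of $I(f)$ and $I(f^{-1})$ guarantees the contribution is sub-exponential'' asserts the hard step rather than proving it. A second, related gap is in (c): a curve of \emph{pointwise} fixed points of $f^n$ contributes to $\Gamma_{f^n}\cdot\Delta$ not a number of points but an excess-intersection term governed by local invariants of the type $\delta(f^n,x)$, $v_C(f^n)$, $\mu_{C,x}(f^n)$; showing these stay bounded in $n$ is nontrivial even for automorphisms (this is the content of Propositions \ref{pro1} and \ref{pro2}, which moreover require first blowing up the singularities of $\fix(f^n)$, and which are carried out only in characteristic $0$). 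Finally, in positive characteristic the cohomological Lefschetz count requires $\ell$-adic cohomology and a bound on the trace of the graph correspondence on the part of $H^2$ complementary to $N^1$, which $\lambda_1$ does not obviously provide.

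The paper takes an entirely different, arithmetic route precisely to avoid these issues: reduce to finite fields, where Hrushovski's theorem on intersections with Frobenius graphs (Theorem \ref{hrushovski}) gives density of non-critical periodic points for any birational map with no counting of multiplicities at all; then lift to characteristic $0$ via a DVR argument (Lemma \ref{dvr}), using Cantat's finiteness of periodic curves (Theorem \ref{finite}) to rule out the obstruction to lifting. The one genuinely dynamical input needed is that $\lambda_1(f_p)>1$ for some reduction $f_p$, and this is what Theorem \ref{f^2>2^18f1} (the effective two-iterate lower bound on $\lambda_1$, proved via the isometric action on the hyperbolic space $\mathbb{H}(\mathfrak{X})$) combined with the semicontinuity statement Theorem \ref{generally} delivers. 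The Lefschetz/Saito counting strategy you propose is used in the paper only for automorphisms (Theorem \ref{cisop}), where $I(f)=\emptyset$ and step (b) disappears. To salvage your approach for general birational $f$ you would need a new argument bounding the local multiplicities at indeterminacy points, which is not supplied.
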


In the case $\mathbf{k}=\mathbb{C}$, this theorem has been proved in many cases using analytic methods. In \cite{Diller2010} of Diller, Dujardin and Guedj, they proved it for birational polynomial maps; or more generally for any birational map such that the points of indeterminacy of $f^{-1}$ does not cluster too much near the points of indeterminacy of $f$. We refer to  \cite{Bedford2005} for a precise statement.

It follows from \cite{favre} that any birational map with $\lambda_1>1$ defined over a non-rational
surface is birationally equivalent to an automorphism.
When $f$ is an automorphism with $\lambda_1>1$, then it is possible to get a more precise count
on the number of isolated periodic points based on Saito's fixed point formula, see
\cite{Iwasaki2010,Saito1987}.
\begin{thm}\label{cisop}Let $X$ be a projective smooth surface over a field with characteristic $0$, $f:X\rightarrow X$ be a nontrivial automorphism with $\lambda_1(f)>1$. We denote by $\#\Per_n(f)$ the number of isolated periodic points of period $n$ counted with multiplicities. Then  we have $$\#\Per_n(f)\sim \lambda_1(f)^n.$$\end{thm}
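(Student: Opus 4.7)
I would work over $\mathbb{C}$ by the Lefschetz principle, and combine the topological Lefschetz fixed point formula, in the version due to Saito that allows fixed curves on a surface, with a spectral analysis of the action of $f^{*}$ on $H^{*}(X,\mathbb{C})$.

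\emph{Step 1: cohomological growth of $L(f^{n})$.} By a classification theorem of Cantat, the hypothesis $\lambda_{1}(f)>1$ forces $X$ to be birational to a K3, an Enriques, an abelian, or a rational surface, so that one may arrange $H^{1}(X,\mathbb{C})=0$ or $X$ is abelian (and then $H^{2}\supset\wedge^{2}H^{1}$). The map $f^{*}$ preserves the cup product on $H^{2}$, and on $\mathrm{NS}(X)_{\mathbb{R}}$ this form has signature $(1,\rho-1)$. A Perron--Frobenius argument combined with the Hodge index theorem shows that $\lambda_{1}(f)$ is a simple eigenvalue of $f^{*}$, paired with $\lambda_{1}(f)^{-1}$, while all other eigenvalues of $f^{*}$ on $H^{2}(X,\mathbb{C})$ have modulus at most $\sqrt{\lambda_{1}(f)}$; the same bound holds for the eigenvalues on $H^{1}$ and $H^{3}$. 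Since $H^{0}$ and $H^{4}$ contribute $1$ each, the alternating trace satisfies
\[
L(f^{n})=\sum_{i=0}^{4}(-1)^{i}\,\mathrm{tr}\bigl(f^{n*}\mid H^{i}(X,\mathbb{C})\bigr)=\lambda_{1}(f)^{n}+O\bigl(\lambda_{1}(f)^{n/2}\bigr).
\]

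\emph{Step 2: Saito's fixed point formula.} Applied to the automorphism $f^{n}$ of the smooth projective surface $X$, Saito's refinement of the Lefschetz--Hopf formula reads
\[
L(f^{n})=\#\mathrm{Per}_{n}(f)+\sum_{C}\chi_{\mathrm{top}}(C),
\]
where $C$ runs over the one-dimensional components of $\mathrm{Fix}(f^{n})$. Each isolated fixed point $p$ of $f^{n}$ contributes its holomorphic Lefschetz index, namely $\dim_{\mathbb{C}}\mathcal{O}_{X,p}/((f^{n})^{*}t_{1}-t_{1},(f^{n})^{*}t_{2}-t_{2})$, which is a positive integer equal to the local multiplicity, so the first sum equals $\#\mathrm{Per}_{n}(f)$ exactly. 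Along a fixed curve the normal eigenvalue of $f^{n}$ is automatically $\neq 1$ (otherwise $\mathrm{Fix}(f^{n})$ would be two-dimensional, contradicting $\lambda_{1}(f^{n})>1$), so the curve contribution reduces to the topological Euler characteristic.

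\emph{Step 3: the curve contribution is $O(1)$.} A component of $\mathrm{Fix}(f^{n})$ is an $f$-periodic irreducible curve. I would invoke the finiteness of the set $\mathcal{Q}$ of $f$-periodic curves of an automorphism with $\lambda_{1}>1$. Indeed, the class of such a curve lies in the $f^{*}$-periodic subspace $W\subset\mathrm{NS}(X)_{\mathbb{R}}$; since $\lambda_{1}(f)$ is not a root of unity, $W$ is contained in the orthogonal complement of the $\lambda_{1}(f)^{\pm 1}$-eigenvectors, and the Hodge index theorem makes the intersection form negative-definite on $W$. Distinct irreducible curves give distinct classes of negative self-intersection in $W\cap\mathrm{NS}(X)$, and a Northcott-type bound on the degree of the finitely many $f$-orbits inside this bounded-rank lattice yields $|\mathcal{Q}|<\infty$. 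Each curve in $\mathcal{Q}$ contributes a fixed Euler characteristic, hence
\[
\Bigl|\sum_{C}\chi_{\mathrm{top}}(C)\Bigr|\leq\sum_{C\in\mathcal{Q}}|\chi_{\mathrm{top}}(C)|=O(1).
\]
Combining the three steps,
\[
\#\mathrm{Per}_{n}(f)=L(f^{n})-O(1)=\lambda_{1}(f)^{n}+O\bigl(\lambda_{1}(f)^{n/2}\bigr)\sim\lambda_{1}(f)^{n}.
\]

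\emph{Main obstacle.} The most delicate step is Step 3: passing from ``a periodic curve has class in a negative-definite subspace of $\mathrm{NS}(X)$'' to the actual finiteness of $\mathcal{Q}$ requires the additional degree bound to rule out accumulation of classes in the lattice $W\cap\mathrm{NS}(X)$. Once this finiteness is in place, Step 1 (a standard spectral analysis) and Step 2 (Saito's fixed point formula) combine cleanly to deliver the asymptotic.
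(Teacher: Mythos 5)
Your Steps 1 and 3 match the paper's strategy (the paper quotes $L(f^n)\sim\lambda_1(f)^n$ from Iwasaki--Uehara and gets finiteness of periodic curves from Cantat's theorem), but Step 2 contains a genuine gap: the fixed point formula you invoke is not Saito's formula, and the simplification you use to reduce to it is based on a false claim. Saito's formula on a surface reads
$$L(f^n)=\sum_{x\in \fix(f^n)}v_x(f^n)+\sum_{C\ \text{of type I}}\chi(C)\,v_C(f^n)+\sum_{C\ \text{of type II}}(C^2)\,v_C(f^n),$$
where each fixed curve carries a multiplicity $v_C(f^n)$ (the generic order of vanishing of $f^n-\mathrm{id}$ along $C$), the curves are split into two types according to the behaviour of an associated $1$-form, and the local terms $v_x(f^n)$ at points of the fixed curves involve further invariants $\delta(f^n,x)$ and $\mu_{C,x}(f^n)$. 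All of these quantities depend a priori on $n$, so even after you know the set of periodic curves is finite, you still have to show the total curve contribution is $O(1)$. Your argument for discarding this --- ``along a fixed curve the normal eigenvalue of $f^n$ is automatically $\neq 1$, otherwise $\fix(f^n)$ would be two-dimensional'' --- is wrong: a map such as $(z_1,z_2)\mapsto(z_1,z_2+z_1^2)$ fixes $\{z_1=0\}$ pointwise with $df=\id$ along the whole curve, yet its fixed locus is one-dimensional. This tangent-to-the-identity situation is exactly the hard case, and the paper devotes an entire proposition to it: writing $f=\exp(\partial)$ for a formal vector field, factoring out the fixed divisor from $\partial$, and comparing Hilbert--Samuel multiplicities via integral closures of ideals to prove that $\delta(f^n,x)$, $v_C(f^n)$ and $\mu_{C,x}(f^n)$ are all independent of $n$. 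Without that input (or the companion statement for the case where the normal eigenvalue is not a root of unity), the identity $\#\Per_n(f)=L(f^n)+O(1)$ is unproved.

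Two smaller omissions: Saito's formula as used requires the components of $\fix(f^n)$ to be smooth (the paper first blows up the singular points of the periodic-curve configuration, checks that this changes both $L(f^n)$ and $\#\Per_n$ only by $O(1)$, and reduces to the simple normal crossing case), and one must first replace $f$ by an iterate so that every periodic curve is actually fixed and the normal eigenvalue along each fixed curve is either $1$ or never a root of unity. Your Step 3, by contrast, is a legitimate alternative to citing Cantat for automorphisms, and Step 1 is the standard spectral estimate; the missing content is entirely the $n$-uniformity of the local indices in Saito's formula.
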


For completeness, we also induce the following corollary of the work of Amerik in \cite{Amerik}.
\begin{thm}\label{coram1}Let $X$ be a surface over an algebraically closed field of characteristic $0$, $f:X\dashrightarrow X$ be a birational map with $\lambda_1(f)>1$. Then there is a point $x\in X$ such that $f^n(x)\in X-I(f)$ for any $n\in \mathbb{Z}$ and $\{f^{n}(x)|n\in \mathbb{Z}\}$ is Zariski dense.

\end{thm}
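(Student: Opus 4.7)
The plan is to combine Amerik's theorem from \cite{Amerik} with the finiteness of $f$-periodic curves for birational surface maps of dynamical degree larger than one. Recall that Amerik's main result, valid in characteristic zero, states that a dominant rational self-map $g\colon Y \dashrightarrow Y$ with $\lambda_1(g)>1$ admits a non-preperiodic algebraic point. Her proof is $\mathfrak{p}$-adic: after spreading the situation out to a model over a number ring, one chooses a prime $\mathfrak{p}$ of good reduction and exhibits a $\mathfrak{p}$-adic ball $U$ and an integer $N$ such that $g^N$ contracts $U$ strictly into itself, yielding an attracting periodic point whose basin contains many non-preperiodic points.

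Applied to our $f$ (and, via $\lambda_1(f^{-1})=\lambda_1(f)>1$, also to $f^{-1}$), this produces some $x_0 \in X$ with a well-defined two-sided orbit avoiding $I(f)\cup I(f^{-1})$ and with $x_0$ non-preperiodic. Let $Y \subseteq X$ denote the Zariski closure of $\{f^n(x_0):n\in\mathbb{Z}\}$; this $Y$ is invariant under $f$ and $f^{-1}$ (away from their indeterminacy loci), and since $x_0$ is non-preperiodic, $\dim Y \geq 1$. If $\dim Y=2$ we are done by irreducibility. The only possibility to exclude is $\dim Y = 1$, in which case the irreducible components of $Y$ form a finite set of curves permuted by $f$, so each one is an $f$-periodic irreducible curve.

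The key geometric input to rule out this case is that for a birational surface map $f$ with $\lambda_1(f)>1$ there are only finitely many $f$-periodic irreducible curves, a classical fact of Diller--Favre and Cantat: such a curve produces an effective class in $N^1(X)_\mathbb{R}$ fixed by an iterate of $f^*$, and the Hodge index theorem combined with $\lambda_1(f)>1$ restricts the set of such fixed effective classes to a finite set. Let $\mathcal{C}$ be the union of all $f$-periodic irreducible curves. I would then refine the application of Amerik's theorem so as to locate a point outside $\mathcal{C}$: since $\mathcal{C}\cup I(f)\cup I(f^{-1})$ is a proper Zariski closed subset, and since Amerik's $\mathfrak{p}$-adic construction has flexibility in the choice of prime $\mathfrak{p}$, iterate $N$, and starting ball $U$, one can arrange her attracting basin to lie in the complement of this subset. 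The resulting point $x$ is non-preperiodic and not on $\mathcal{C}$, so its orbit closure $Y$ is neither a point nor contained in $\mathcal{C}$; hence $\dim Y = 2$ and $Y=X$, proving Zariski density.

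The main obstacle is this final refinement: one must verify that Amerik's basin of attraction can be placed in the complement of a prescribed proper subvariety. Morally this is a Baire-type flexibility statement in the $\mathfrak{p}$-adic analytic topology, and it should follow from a careful reading of the proof in \cite{Amerik}: the set of primes of good reduction is infinite, and for each such prime there is abundant room in $X(\overline{\mathbb{Q}}_{\mathfrak{p}})$ to choose the contracting ball away from the specialization of $\mathcal{C}$.
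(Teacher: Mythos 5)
Your overall strategy --- Amerik's theorem plus Cantat's finiteness of periodic curves, then ruling out a one-dimensional orbit closure --- is exactly the paper's. But the step you yourself flag as ``the main obstacle'' is a genuine gap and is left unproven: you never establish that Amerik's $\mathfrak{p}$-adic attracting basin can be arranged to produce an algebraic point whose whole two-sided orbit avoids the union $\mathcal{C}$ of periodic curves together with $I(f)\cup I(f^{-1})$. Saying that ``it should follow from a careful reading of the proof in \cite{Amerik}'' is not an argument, and the refinement is not automatic: one must control not just the starting ball but the location of the entire orbit of the chosen algebraic point, and the specialization of $\mathcal{C}$ could a priori meet every candidate ball at the chosen prime. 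As written, your proof only excludes $\dim Y=0$, not $\dim Y=1$.

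The paper closes this gap by a much simpler device that makes any re-engineering of Amerik's construction unnecessary: her theorem, as recorded in Theorem \ref{amerik} and extended in Proposition \ref{am}, is stated for an arbitrary variety, not only a projective one. Since $\mathcal{C}$ is invariant, $U:=X-\mathcal{C}$ is an open subvariety on which $f$ restricts to a birational self-map, and one applies Proposition \ref{am} directly to $(U,f|_U)$. This yields a point $x\in U$ with infinite two-sided orbit contained in $U-I(f)$; if the closure $S$ of the orbit were a curve, the union $D$ of its one-dimensional components would be a periodic curve meeting $U$, contradicting the definition of $U$. A second, smaller omission in your write-up: the statement is over an arbitrary algebraically closed field of characteristic $0$, whereas Amerik's $\mathfrak{p}$-adic method requires the data to be defined over $\overline{\mathbb{Q}}$; the paper devotes Lemma \ref{section} and Proposition \ref{am} to the spreading-out and specialization argument that transports the conclusion to a general $\mathbf{k}$, a step your proposal takes for granted.
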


Let us explain our strategy to prove Theorem \ref{main1}.
We follow the original method of Hrushovski and Fakhruddin by reducing our result to the case of finite fields.

For simplicity, we may assume that $X=\mathbb{P}^2$ and $f=[f_0,f_1,f_2]$ is a birational map with $\lambda_1(f)>1$ and has integral coefficients.

First assume we can find a prime $p>0$ such that the reduction $f_p$ modulo $p$ of $f$ satisfies $\lambda_1(f_p)>1$. Then $\mathcal{P}(f_p)$ is Zariski dense in $\mathbb{P}^2(\overline{\mathbb{F}_p})$ by a direct application of Hrushovski's arguments. One then lifts these periodic points to $\mathbb{P}^2(\overline{\mathbb{Q}})$ by combining a result of Cantat \cite{Invarianthypersurfaces} proving that most periodic points are isolated together with a simple dimensional argument borrowed from Fakhruddin \cite{fa}.

The main difficulty thus lies in proving that $\lambda_1(f_p)>1$ for at least one prime $p$. Actually we show:
 \begin{thm}\label{p1} Let $f:\mathbb{P}^2(\mathbb{Z})\dashrightarrow\mathbb{P}^2(\mathbb{Z})$ be a
birational map defined over $\mathbb{Z}$ (i.e. $f=[f_0,f_1,f_2]$
such that all coefficients of $f_i$'s are integers). Then for any
prime $p$ sufficiently large, $f$ induces a birational map
$f_p:\mathbb{P}^2(\mathbb{F}_p)\dashrightarrow\mathbb{P}^2(\mathbb{F}_p)$
on the special fiber at $p$, and
$$\lim_{p\rightarrow\infty}\lambda_1(f_p)=\lambda_1(f).$$

 \end{thm}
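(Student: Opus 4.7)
My plan is to reduce $f$ to an algebraically stable (AS) birational model via the regularization theorem of Diller--Favre, where $\lambda_1$ is computed as the spectral radius of an integer matrix, and then to exploit good reduction of this linear data. The easy upper bound $\lambda_1(f_p) \leq \lambda_1(f)$ comes first: writing $f^n = [F_0^{(n)} : F_1^{(n)} : F_2^{(n)}]$ in lowest terms over $\mathbb{Z}$ with $\deg F_i^{(n)} = \deg(f^n)$, reduction modulo $p$ can only introduce further common factors between the $\overline{F_i^{(n)}}$, so $\deg(f_p^n) \leq \deg(f^n)$ for every $n$. Taking $n$-th roots and letting $n \to \infty$ gives the upper bound immediately, hence $\limsup_p \lambda_1(f_p) \leq \lambda_1(f)$.

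\medskip

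For the matching lower bound, I would invoke Diller--Favre: possibly after replacing $f$ by an iterate $f^M$, there exists a birational morphism $\pi : X \to \mathbb{P}^2$, a composition of blowups at finitely many (possibly infinitely near) points defined over $\mathbb{Z}[1/K]$ for some integer $K$, such that $\tilde g := \pi^{-1} \circ f^M \circ \pi$ is algebraically stable on $X$. On an AS model, $\lambda_1(\tilde g) = \rho(\tilde g^*|_{N^1(X)_\R})$, the spectral radius of an integer matrix. For $p \nmid K$ and outside finitely many further bad primes, both $X$ and $\tilde g$ reduce to a smooth projective $X_p$ and a birational $\tilde g_p$. Under good reduction, specialization embeds $N^1(X)$ as a sublattice of $N^1(X_p)$ compatibly with intersection pairings and pullbacks, so $\tilde g_p^*$ restricted to $N^1(X)_\R$ is given by the same integer matrix as $\tilde g^*$. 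Granting that $\tilde g_p$ is itself AS, this yields
\[
\lambda_1(\tilde g_p) = \rho\bigl(\tilde g_p^*|_{N^1(X_p)_\R}\bigr) \geq \rho\bigl(\tilde g_p^*|_{N^1(X)_\R}\bigr) = \lambda_1(\tilde g) = \lambda_1(f)^M.
\]
Birational invariance of $\lambda_1$ combined with $\lambda_1(f_p^M) = \lambda_1(f_p)^M$ then gives $\lambda_1(f_p) \geq \lambda_1(f)$ for $p$ large.

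\medskip

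The hard part will be verifying that $\tilde g_p$ remains AS for $p$ large. Algebraic stability is a priori an orbit condition involving infinitely many iterates, but on a surface it can be encoded as a finite collection of non-incidence conditions between the forward orbits of the exceptional divisors of $\tilde g$ and the indeterminacy locus $I(\tilde g)$. These conditions are algebraic and satisfied at the generic point by construction, so they persist under reduction for $p \gg 0$. Once this is secured, combining the two bounds actually shows $\lambda_1(f_p) = \lambda_1(f)$ for every sufficiently large $p$, which is strictly stronger than the claimed limit.
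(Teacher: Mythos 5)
Your upper bound is fine and matches the paper's. But the lower bound collapses at exactly the point you flag as ``the hard part'': algebraic stability is \emph{not} a finite collection of algebraic conditions. It is the requirement that for every contracted curve $V$ and \emph{every} $n\geq 0$ one has $f^n(V)\not\subseteq I(f)$ --- infinitely many non-incidence conditions along a typically infinite orbit. Each individual condition is open and holds at the generic point, but their intersection need not contain any cofinite set of primes. The paper's own Section 5 is a counterexample to your claim: $f=[xy,\,xy-2z^2,\,yz+3z^2]$ is algebraically stable over $\mathbb{Q}$ because the relevant orbit is $\{[0,2^{l+2},1]\}$ (after a coordinate change) and never meets $I(f)$; but modulo any $p>2$ the element $2$ has finite multiplicative order, the orbit becomes periodic, hits $I(f_p)$, and $f_p$ fails to be algebraically stable --- with the consequence that $\lambda_1(f_p)<\lambda_1(f)=2$ for \emph{every} prime $p$. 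This also refutes your final assertion that $\lambda_1(f_p)=\lambda_1(f)$ for all $p\gg 0$; only the limit statement (equivalently, lower semi-continuity of $\lambda_1$ in the family) is true, and your ``strictly stronger'' conclusion is false.

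The paper avoids this obstruction entirely by never reducing an AS model. Instead it proves an effective two-iterate criterion (Theorem \ref{f^2>2^18f1}, via the isometric action on the hyperbolic space $\mathbb{H}(\mathfrak{X})$ of $\mathbb{L}^2$-classes on the Riemann--Zariski space): if $q_n=\deg_L(f^{2n})/(3^{18}\sqrt{2}\deg_L(f^n))\geq 1$ then $\lambda_1(f)>(q_n/2)^{1/n}$, and $(q_n/2)^{1/n}\to\lambda_1(f)$. Thus any $\lambda<\lambda_1(f)$ is certified by the two integers $\deg_L(f^n)$ and $\deg_L(f^{2n})$ for a single large $n$. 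Since $\deg_{L_p}(f_p^m)=\deg_{L_\kappa}(f_\kappa^m)$ for each fixed $m$ on a nonempty open set of primes (Lemma \ref{lowersemicontinuous}), these two finitely many equalities transfer to all large $p$ and give $\lambda_1(f_p)>\lambda$. If you want to salvage your approach, you would need to replace ``$\tilde g_p$ is AS'' by some quantitative statement about how long the orbits of exceptional curves avoid $I(\tilde g_p)$, which is essentially what the degree of a high iterate encodes --- i.e., you would be led back to a criterion of the paper's type.
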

We also give an example of a birational map $f$ on $\mathbb{P}^2(\mathbb{Z})$ such that  $\lambda_1(f_p)<\lambda_1(f)$ for all $p$ in section \ref{example}.
\\

In fact we prove a quite general version of Theorem \ref{p1} for families of birational maps of surfaces over integral schemes. It allows us to prove:
\begin{thm}\label{1general}Let $\mathbf{k}$ be an algebraically closed field and $d\geq 2$ be an integer. We denote by $\Bir_d$ the space of birational maps of $\mathbb{P}^2(\mathbf{k})$ of degree $d$. Then for any $\lambda<d$, $U_{\lambda}=\{f\in \Bir_d | \lambda_1(f)>\lambda\}$ is a Zariski dense open set of $\Bir_d$.

In particular, for a general birational map $f$ of degree $d>1$, we have $\lambda_1(f)>1$.

\end{thm}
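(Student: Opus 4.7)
The plan is to combine the general family version of Theorem~\ref{p1} (announced in the introduction for families of birational surface maps over integral schemes) with an explicit construction of a degree-$d$ birational map of $\mathbb{P}^2$ realizing $\lambda_1 = d$. Together, these give openness and non-emptiness of $U_\lambda$, hence its density.

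For openness, consider the tautological family of birational maps over any irreducible subvariety $S \subseteq \Bir_d$. The family version of Theorem~\ref{p1} gives $\lambda_1(f_s) \le \lambda_1(f_\eta)$ for every closed point $s\in S$ (where $\eta$ is the generic point of $S$), with equality on a Zariski dense open subset of $S$. Applied to all one-parameter subfamilies through any $f \in \Bir_d$, this yields the lower semicontinuity of $f \mapsto \lambda_1(f)$, whence $U_\lambda = \{f : \lambda_1(f) > \lambda\}$ is Zariski open.

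For non-emptiness, I would use the H\'enon-type map
\[
f_0 \colon [X:Y:Z] \longmapsto [\,YZ^{d-1} :\ Y^d + \alpha X Z^{d-1} :\ Z^d\,], \qquad \alpha \in \mathbf{k}^\times,
\]
a birational transformation of algebraic degree $d$ with unique indeterminacy point $[1:0:0]$, whose exceptional line $\{Z=0\}$ is contracted to the fixed point $[0:1:0]$. Since $[0:1:0]$ is not an indeterminacy point, $f_0$ is algebraically stable on $\mathbb{P}^2$; hence $\deg(f_0^n) = d^n$ and $\lambda_1(f_0) = d$, so $f_0 \in U_\lambda$ for every $\lambda < d$.

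Combining these two ingredients, $U_\lambda$ is a non-empty Zariski open subset of $\Bir_d$; density is then argued component by component, by deforming the H\'enon example inside each irreducible component of $\Bir_d$ via pre- and post-composition with linear automorphisms of $\mathbb{P}^2$ and standard Cremona involutions, without lowering $\lambda_1$. The principal obstacle is the family version of Theorem~\ref{p1}: extending the specialization analysis from $\Spec \Z$ to an arbitrary integral base requires following, uniformly in the parameter, the birational models on which each $f_s$ becomes algebraically stable, which is where most of the work of the paper is concentrated.
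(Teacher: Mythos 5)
Your openness step is essentially the paper's: lower semicontinuity of $f\mapsto\lambda_1(f)$ on families (Theorem \ref{generally}, proved via Lemma \ref{lowersemicontinuous} and the effective bound of Corollary \ref{kappa}) makes $U_\lambda$ open. (One caveat: the family theorem does not give $\lambda_1(f_s)=\lambda_1(f_\eta)$ on a dense open set, only $\lambda_1(f_s)>\lambda$ for each fixed $\lambda<\lambda_1(f_\eta)$; Section \ref{example} shows equality can fail at every closed point. But semicontinuity is all you need here, so this does not hurt you.)

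The genuine gap is in your density argument. A non-empty open subset of $\Bir_d$ is dense only if it meets every irreducible component, and $\Bir_d$ is reducible for $d\geq 3$, with components that are not well understood --- the paper explicitly flags this as the whole difficulty ("there is no good description of the components of $\Bir_d$ for $d\geq 3$"). Your H\'enon-type example certifies non-emptiness, but deforming it by pre- and post-composition with elements of $\PGL(3)$ never leaves the component containing it (since $\PGL(3)$ is connected), and composing with Cremona involutions changes the degree, so it does not even stay in $\Bir_d$. Thus your construction says nothing about the other components. The paper's argument is structured the other way around: start from an \emph{arbitrary} $f$ in an \emph{arbitrary} component $S$, and show that for $A$ in a non-empty open subset of $\PGL(3)$ the map $A\circ f$ satisfies $\deg\bigl((A\circ f)^s\bigr)=d^s$ for $s\leq 2l$ (by arranging that finitely many forward orbits of points avoid $I(f)$); since $\PGL(3)$ is irreducible, $A\circ f$ still lies in $S$, and the effective two-iterate bound of Theorem \ref{f^2>2^18f} --- not algebraic stability of all iterates, which one cannot guarantee here --- forces $\lambda_1(A\circ f)>\lambda$ once $l$ is large. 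Semicontinuity applied to the tautological family over $S$ then spreads this to a general point of $S$. Without an argument of this kind reaching into every component, your proof establishes that $U_\lambda$ is open and non-empty, but not that it is dense.
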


In order to prove Theorem \ref{p1}, we need to  control $\lambda_1(f)$ in terms of the degree of a fixed iterate of $f$. The control is given by our next theorem.
\begin{thm}\label{f^2>2^18f1}
Let $X$ be a projective surface over an algebraically closed field, $L$ be an ample bundle on $X$, $f:X\dashrightarrow X$ be a birational map
and $q=\frac{\deg_{L}(f^2)}{3^{18}\sqrt{2}\deg_{L}(f)} $. If $q\geq1$, then $$\lambda_1(f)>\frac{(4\times3^{36}-1)q^2+1}{4\times3^{36}q}\geq1.$$

\end{thm}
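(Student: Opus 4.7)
My approach is to work on the Picard--Manin space $\mathcal{Z}(X)$, a real vector space of countably infinite rank equipped with a symmetric bilinear form of Lorentzian signature $(1,\infty)$, on which the birational pullback $f^*$ extends to a linear isometry satisfying the crucial identity $(f^n)^* = (f^*)^n$ (which generally fails on $\mathrm{NS}(X)$). Setting $\omega := L/\sqrt{L\cdot L}$ so that $\omega \cdot \omega = 1$, the quantities $c_n := \omega\cdot(f^*)^n\omega = \deg_L(f^n)/(L\cdot L)$ record the orbit of $\omega$ under $f^*$ in the associated infinite-dimensional hyperbolic model. The isometric identities $(f^*L)^2 = (f^{*2}L)^2 = L\cdot L$ and $f^*L\cdot f^{*2}L = L\cdot f^*L$ pin down the Gram matrix of $\{L,f^*L,f^{*2}L\}$ purely in terms of $c_1 = a_1/a_0$ and $c_2 = a_2/a_0$.

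The first step is to apply the Hodge index theorem to this 3-dimensional subspace, which via nonnegativity of the Gram determinant yields the universal constraint $2c_1^2 \geq c_2 + 1$, a ``reverse Cauchy--Schwarz'' between $\deg_L(f)^2$ and $\deg_L(f^2)$. The second step is to use the classification of isometries of $\mathcal{Z}(X)$ (elliptic, parabolic, or loxodromic). In the non-loxodromic cases, the orbit of $\omega$ stays on an invariant sphere or horosphere, and one can show via the hyperbolic law of cosines that $c_2/c_1$ is bounded by an explicit universal constant — essentially $2$ in the elliptic case and $4$ in the parabolic case — which is far below $3^{18}\sqrt{2}$. Hence the hypothesis $q \geq 1$ forces $f^*$ loxodromic and $\lambda_1(f) > 1$; in that regime $f^*$ preserves an invariant Lorentzian 2-plane $W$ on which it is a Lorentz boost of translation length $\log\lambda_1$, together with an orthogonal negative-definite complement $W^\perp$ on which $f^*$ acts as an orthogonal rotation $\rho$.

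For the quantitative bound, write $\omega = \omega_W + \omega^\perp$ with $s := -(\omega^\perp)^2 \geq 0$ and set $T := \cosh(\log\lambda_1) = (\lambda_1 + \lambda_1^{-1})/2$. The explicit action of $f^*$ gives $c_1 = (1+s)T - \langle\omega^\perp,\rho\omega^\perp\rangle$ and $c_2 = (1+s)(2T^2-1) - \langle\omega^\perp,\rho^2\omega^\perp\rangle$, with the Cauchy--Schwarz bounds $|\langle\omega^\perp,\rho^i\omega^\perp\rangle|\leq s$. Minimizing $T$ over all parameters $(s,\rho,\omega^\perp)$ compatible with the prescribed $c_1, c_2$ (and with the Hodge-index constraint $2c_1^2 \geq c_2+1$) should yield a quadratic inequality equivalent to $Kq\lambda_1 > (K-1)q^2 + 1$ with $K = 4\cdot 3^{36}$. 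The main obstacle is precisely this minimization over the transverse rotation: in the trivial case $\rho = \mathrm{id}$ one obtains a much sharper estimate $\lambda_1 \gtrsim c_2/c_1$, but a generic nontrivial $\rho$ can substantially shrink the apparent lower bound on $\lambda_1$ for fixed $(c_1, c_2)$, and balancing these two constraints is where the explicit constants $3^{18}\sqrt{2}$ and $4\cdot 3^{36}$ enter. The technical heart of the proof is thus the careful optimization that controls this worst-case contribution and extracts the stated form of the inequality.
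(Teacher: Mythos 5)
Your proposal takes a genuinely different route from the paper. The paper never classifies the isometry $f^*$ of $\mathbb{H}(\mathfrak{X})$: it only uses that $\mathbb{H}(\mathfrak{X})$ satisfies the Rips condition with constant $\delta=\log 3$ and then invokes the local-to-global criterion for quasi-geodesics (Theorem 5.16 of \cite{harpe}): if $d(x_{i+1},x_{i-1})\geq\max(d(x_{i+1},x_i),d(x_i,x_{i-1}))+18\delta+\kappa$ along the orbit $x_i=f^{*i}\mathcal{L}$, then $d(x_n,x_0)\geq\kappa n$. Since $d(x_{i+1},x_{i-1})=\cosh^{-1}(\deg_L f^2)$ and $d(x_{i+1},x_i)=\cosh^{-1}(\deg_L f)$ for every $i$, an elementary $\cosh^{-1}$ computation converts the hypothesis $q\geq 1$ into this displacement condition; the constants $3^{18}\sqrt{2}$ and $4\times 3^{36}$ are artifacts of $e^{18\delta}=3^{18}$, not of any spectral optimization. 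Your route via the loxodromic normal form would, if completed, actually yield a much sharper bound of the shape $\deg_L(f^2)/\deg_L(f)<2+\lambda_1+\lambda_1^{-1}$ (plus a uniform bound $\leq 4$ in the non-loxodromic cases), which trivially implies the stated inequality.

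The difficulty is that the decisive estimates are not carried out, and you say so yourself ("should yield", "the technical heart of the proof is thus the careful optimization"). Concretely, two things are missing. First, the trichotomy with normal forms on the infinite-dimensional space $\mathbb{L}^2(\mathfrak{X})$ (existence of the isotropic eigenvectors in the loxodromic case, of a fixed point or fixed isotropic ray otherwise) needs justification, and your claimed universal bounds on $c_2/c_1$ in the elliptic and parabolic cases are asserted via a "hyperbolic law of cosines" without proof (the correct bound is $c_2\leq 4c_1-3$ in both cases, obtained e.g.\ from $\|\rho^2 v-v\|\leq 2\|\rho v-v\|$ on an invariant sphere/horosphere). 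Second, and more seriously, the minimization over $(s,\rho,\omega^{\perp})$ is exactly where the proof lives, and you stop short of it while expressing doubt that it closes ("a generic nontrivial $\rho$ can substantially shrink the apparent lower bound"). It does close, but not with the crude bounds $|\langle\omega^{\perp},\rho^i\omega^{\perp}\rangle|\leq s$ you list, which are insufficient (they allow $T\to 1$ with $c_2/c_1$ fixed as $s\to\infty$). One must use the positive semidefiniteness of the Gram matrix of $\{\omega^{\perp},\rho\omega^{\perp},\rho^2\omega^{\perp}\}$, i.e.\ $t_2\geq 2t_1^2/s-s$, after which a monotonicity argument in $\sigma=s/(1+s)$ gives $\sup c_2/c_1=2(T+\tau_1)\leq 2(T+1)$. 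Without this step the proposal does not prove the theorem; the Hodge-index constraint $2c_1^2\geq c_2+1$ you do establish is an upper bound on $c_2$ and contributes nothing to lower-bounding $\lambda_1$.
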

In particular if $\deg_L(f^2)\geq 3^{18}\sqrt{2}\deg_L(f)$ then $\lambda_1(f)>1.$
 This result has been stated in \cite{favreBourbaki} by Favre without proof.

The key tool to prove this theorem is the action of the birational map as an isometry on a suitable hyperbolic space of infinite dimension. This space is constructed as a set of classes in the Riemann-Zariski space of a surface which was introduced by Cantat in \cite{Cantat2007}. See also \cite{Cantat}, \cite{boucksomfavrejonsson} and \cite{favreBourbaki}.\\

The article is organized in 7 sections.

In section \ref{notation} we introduce backgrand on intersection theory on surface and  Riemann-Zariski space that will be used in this article.
In section \ref{deg} we prove Theorem \ref{f^2>2^18f1}
In section \ref{variation} we apply Theorem \ref{main1} to study the behavior of the first dynamical degree in a family of birational maps of surfaces.
In section \ref{example} we give an example of a birational map $f$ on $\mathbb{P}^2(\mathbb{Z})$ such that  $\lambda_1(f_p)<\lambda_1(f)$ for all prime $p$.
In section \ref{case1}, we prove Theorem \ref{main1} and Theorem \ref{coram1}.
In section \ref{Automorphisms>1}, we use Saito's formula to study isolated periodic points of automorphisms and prove Theorem \ref{cisop}.
In section \ref{case2}, we study the Zariski density of periodic points in the case $\lambda_1=1$.
We use Theorem \ref{fadi} of Favre and Diller \cite{favre} which divides the birational map of surfaces into four cases by the degree growth.
We analyze all these cases in Theorem \ref{fadi}.
Finally we combine the results in section \ref{case1} with section \ref{case2} to get Theorem \ref{class}.

\section*{Acknowledgement}
I thank ANR Berko for the support. I thank Charles Favre for his direction during the writing of this article, and the referee for his careful reading. I thank Fu Lie with whom I had several interesting discussions about the material presented here. Also, I thank  Joseph H.Silverman for pointing out those mistakes in the first version of this article. Finally I thank Zhuang Wei Dong for his help in correcting my English.

\section{Background and Notation }\label{notation}
In this paper, a variety is always defined over an algebraically
closed field and we use the notation $\mathbf{k}$ to denote an algebraically closed field of arbitrary characteristic except in Subsection \ref{subam}, Section \ref{Automorphisms>1} and Section \ref{case2}. In Subsection \ref{subam} and Section \ref{Automorphisms>1}, $\mathbf{k}$  denotes an algebraically closed field of characteristic $0$. In Section \ref{case2}, $\mathbf{k}$  denotes an algebraically closed field of characteristic different from $2$ and $3$.

\subsection{N\'eron-Severi group }
In this subsection, we will recall the definition and some properties of the N\'eron-Severi group. All the material in this section can be found in \cite{Lazarsfeld}.

Let $X$ be a projective variety over $\mathbf{k}$. We denote by $\Pic(X)$ the Picard group of $X$. We denote by $\Pic^0(X)$ the group of elements in $\Pic(X)$ which are numerically equivalent to $0$.  We recall that it is an abelian variety.
The N\'eron-Severi group of $X$ is defined as the group of numerical
equivalence classes of divisors of $X$. We denote it by
$N^1(X)$, and write
$N^1(X)_{\mathbb{R}}=N^1(X)\otimes_{\mathbb{Z}}\mathbb{R}$.
This group $N^1(X)$ is a free abelian group of finite
rank (see \cite{n1}).
Let $\phi:X\rightarrow Y$ be a morphism between projective variety
over an algebraically closed field. Then it induces natural maps
$\phi^*: N^1(Y)_{\mathbb{R}}\rightarrow N^1(X)_{\mathbb{R}}$, and
$\phi_*: N^1(X)_{\mathbb{R}}\rightarrow N^1(Y)_{\mathbb{R}}$ induced
by the actions on divisors .

\begin{pro}(Pull-back formula)
 Let $ \pi : Y \rightarrow X $ be a
surjective morphism between projective varieties over an
algebraically closed field. For any choose $\alpha_1,\cdots , \alpha_r\in N^1(Y)$ ,we
have $(\pi^*\alpha_1 \cdot\ldots\cdot\pi^*\alpha_r) = \deg (\pi)(\alpha_1
\cdot\ldots\cdot \alpha_r)$.
\end{pro}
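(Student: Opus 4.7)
\medskip
\noindent\textbf{Proof plan.} First I would observe that the displayed formula only makes sense when both sides are defined: the left-hand side is an intersection of $r$ divisor classes on $Y$, so it is a number exactly when $r=\dim Y$, and similarly for the right when $r=\dim X$. Since $\pi$ is a surjective morphism of projective varieties and $\deg(\pi)$ (as the degree of the function field extension $[k(Y):k(X)]$) is meant to be finite, we must have $\dim X=\dim Y=r$. So the first step is just to fix this dimension convention and recall that $\deg(\pi)$ is then well defined.

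Next I would reduce the statement to one about $0$-cycles. Because $\pi^*\colon N^1(X)_\R\to N^1(Y)_\R$ is compatible with the intersection product (it is a ring homomorphism on the Chow/numerical rings after pulling back divisor classes), we have the identity
$$\pi^*\alpha_1\cdot\ldots\cdot\pi^*\alpha_r=\pi^*(\alpha_1\cdot\ldots\cdot\alpha_r)$$
inside $N^r(Y)_\R$, i.e.\ as a class of $0$-cycles on $Y$. So the whole problem collapses to the single assertion that for every $0$-cycle class $\gamma\in N^r(X)_\R$ one has $\deg_Y(\pi^*\gamma)=\deg(\pi)\cdot\deg_X(\gamma)$. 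By multilinearity it suffices to check this when $\gamma$ is the class $[x]$ of a single closed point.

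To prove $\deg_Y(\pi^*[x])=\deg(\pi)$, I would choose the point $x$ generically: specifically, inside the dense open $U\subset X$ over which $\pi$ is finite and flat (in characteristic zero, étale above the complement of the branch locus; in positive characteristic, still flat on a suitable dense open, with the scheme-theoretic fiber of length $\deg(\pi)$). Then $\pi^{-1}(x)$ is a zero-dimensional subscheme of $Y$ of length $\deg(\pi)$, representing $\pi^*[x]$, whose degree is therefore $\deg(\pi)$. Combining with the previous step yields the pull-back formula. The main delicacy to be mindful of is arranging the representative so that flatness of $\pi$ at the chosen point is available — one could equivalently invoke the projection formula $\pi_*(\pi^*\alpha\cdot\beta)=\alpha\cdot\pi_*\beta$ together with $\pi_*[Y]=\deg(\pi)\,[X]$ as a black box, which automatically handles the inseparable and ramified cases uniformly.
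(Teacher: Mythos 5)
The paper offers no proof of this proposition: it is quoted as standard background, with the blanket reference to Lazarsfeld's book at the start of the subsection, so there is nothing to compare your argument against line by line. (Note also that the statement as printed has a typo: the $\alpha_i$ must lie in $N^1(X)$, not $N^1(Y)$, for $\pi^*\alpha_i$ to make sense; you silently and correctly fix this.)

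Your argument is correct in substance, and the two variants you offer are not equally robust. The proposition is stated for projective \emph{varieties}, not smooth ones, so intersection numbers of classes in $N^1$ are Snapper--Kleiman numbers of Cartier divisor classes rather than products in a Chow ring; your first reduction, rewriting $\pi^*\alpha_1\cdots\pi^*\alpha_r$ as $\pi^*(\alpha_1\cdots\alpha_r)$ in a group of $0$-cycle classes and then evaluating on a general fiber, implicitly uses a cycle-theoretic intersection product and a moving argument that one should not take for granted on singular $X$ and $Y$. The parenthetical alternative you mention at the end --- the projection formula $\pi_*(\pi^*\alpha\cdot\beta)=\alpha\cdot\pi_*\beta$ together with $\pi_*[Y]=\deg(\pi)[X]$, applied $r$ times --- is the standard proof in this generality (it is how Kleiman and Lazarsfeld derive the statement), handles ramification and inseparability uniformly as you say, and is the version I would keep as the actual proof rather than as a remark.
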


One defines $N_1(X)_{\mathbb{R}}$ to be the real space of numerical
equivalence classes of real one-cycles of $X$. One has a perfect pairing$$N^1(X)_{\mathbb{R}}\times
N_1(X)_{\mathbb{R}}\rightarrow \mathbb{R},\,\,\,
(\delta,\gamma)\rightarrow (\delta\cdot\gamma)\in \mathbb{R}.$$

So that $N_1(X)_{\mathbb{R}}$ is dual to $N^1(X)_{\mathbb{R}}$.
In particular $N_1(X)_{\mathbb{R}}$ is a finite dimensional real
vector space on which we put the standard Euclidean topology.

A class $\alpha\in N^1(X)_{\mathbb{R}}$, we say that $\alpha$ is nef if and only if $(\alpha \cdot [C])\geq 0$ for any curve $C$.

In this paper, we are often in the case that
$X$ is a projective smooth surface. In this case, we can identify
$N^1(X)_{\mathbb{R}}$ and $N_1(X)_{\mathbb{R}}$. Then we get a natural bilinear form on $N^1(X)_{\mathbb{R}}$.
The following theorem tells us that the signature of this bilinear form is $(1,\dim N^1(X)_{\mathbb{R}}-1)$.

\begin{thm}(Hodge index theorem)\label{Hodge}
Let $X$ be a projective smooth surface, $L,M$ $\mathbb{R}$-divisor,
such that $M\nsim_{num}0$, $(L^2)\geq 0$ and $(L\cdot M)=0$. Then $(M^2)\leq 0$ and the equality holds if and only if $(L^2)= 0$ and
$M\sim_{num}lL$ for some $l\in \mathbb{R}$.
\end{thm}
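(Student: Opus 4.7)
The plan is to reduce the statement to a signature fact about the intersection form on $N^1(X)_\R$. Once one knows that this form has signature $(1,\rho(X)-1)$, the theorem follows by elementary linear algebra. Indeed, when $L^2>0$ the decomposition $N^1(X)_\R=\R L\oplus L^\perp$ is orthogonal with $L^\perp$ negative definite, so every nonzero $M\in L^\perp$ satisfies $M^2<0$, which is the strict inequality case. When $L^2=0$ and $L\not\equiv 0$, the class $L$ is a nonzero null vector, and the form restricted to $L^\perp$ (which contains $L$ itself) is negative semidefinite with radical exactly $\R L$; hence any $M\in L^\perp\setminus\{0\}$ satisfies $M^2\le 0$, with equality precisely when $M\in\R L$.

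Thus the heart of the argument is the signature statement, which I would formulate as the following key lemma: for any ample class $H$, the restriction of the intersection form to $H^\perp$ is negative definite. So fix $H$ ample, take $D\in N^1(X)_\R$ with $D\cdot H=0$ and $D\not\equiv 0$, and aim to show $D^2<0$. After clearing denominators I may assume $D$ is an integral class, and I suppose for contradiction that $D^2\ge 0$.

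In the case $D^2>0$, asymptotic Riemann--Roch gives $\chi(nD)=\chi(\sO_X)+\frac{n^2}{2}D^2-\frac{n}{2}D\cdot K_X\to+\infty$, so for some subsequence either $h^0(nD)$ or $h^0(K_X-nD)$ is unbounded. In the first case $nD$ is numerically equivalent to an effective divisor $E$ with $E\cdot H=0$, so ampleness of $H$ forces $E=0$ and hence $D\equiv 0$, a contradiction. In the second case $K_X-nD\equiv F_n$ with $F_n$ effective and $F_n\cdot H=K_X\cdot H$ bounded; effective classes of bounded $H$-degree form a bounded family with only finitely many numerical classes, so two of the $F_n$ coincide, which forces $D\equiv 0$, again a contradiction.

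In the case $D^2=0$ the growth of $\chi(nD)$ is no longer enough, and I would use perturbation inside $H^\perp$. For any $E\in H^\perp$, the class $D+tE$ lies in $H^\perp$ and has $(D+tE)^2=2t(D\cdot E)+t^2E^2$, which is strictly positive for a suitably chosen small nonzero $t$ whenever $D\cdot E\neq 0$; the previous case then forces $D+tE\equiv 0$, absurd. So $D\cdot E=0$ for every $E\in H^\perp$, and non-degeneracy of the intersection pairing on $N^1(X)_\R$ places $D$ in $(H^\perp)^\perp=\R H$; the constraint $D\cdot H=0$ then gives $D=0$, a final contradiction. The main obstacle is precisely this borderline case $D^2=0$, which genuinely requires both the perturbation trick and the non-degeneracy of the numerical intersection pairing; the $D^2>0$ case is a direct Riemann--Roch plus bounded-family argument, and the passage from the key lemma to the theorem is pure linear algebra.
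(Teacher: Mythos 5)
The paper offers no proof of this theorem to compare against: it is quoted as classical background in the subsection on the N\'eron--Severi group, where the reader is referred to the literature. Your argument is the standard one (essentially Hartshorne's proof of the Hodge index theorem, plus the linear-algebra reduction from the signature statement), and its skeleton is sound: the Riemann--Roch and bounded-family contradiction when $D^2>0$, the perturbation combined with non-degeneracy of the pairing when $D^2=0$, and the Cauchy--Schwarz analysis of the orthogonal complement of a nonzero null vector in a form of signature $(1,\rho-1)$ are all correct.

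Two points deserve attention. First, the statement concerns $\mathbb{R}$-divisor classes, but your key lemma is only established where ``clearing denominators'' makes sense, i.e.\ for rational classes; an irrational class in $H^{\perp}$ is not a multiple of an integral one, so Riemann--Roch cannot be applied to it directly, and negative definiteness on a dense subset of $H^{\perp}$ only gives negative semidefiniteness on all of $H^{\perp}$ by continuity. The gap closes easily: $H$ may be taken integral, so $H^{\perp}$ is a rational hyperplane with dense rational points, and the condition $D^2>0$ is open; a real class in $H^{\perp}$ of positive square would therefore be approximated by a rational one of positive square, contradicting the case you proved, and strict negativity for a nonzero real class with $D^2=0$ then follows from your own radical argument (a null vector of a negative semidefinite form lies in its radical, which injects into the radical of the full non-degenerate pairing). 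Second, as literally stated the theorem fails when $L\equiv 0$, since all hypotheses are then vacuous while $M$ may be ample; you implicitly and correctly restrict to $L\not\equiv 0$ in the $L^2=0$ branch, which is the intended reading of the statement.
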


\subsection{Basics on birational maps on surfaces}
We recall that the resolution of singularities of surface over any algebraically closed field exists (see \cite{Abhyankar1966}).

Let $X,Y$ be projective smooth surfaces. A birational map $f:X\dashrightarrow Y$ is defined by its graph $\Gamma(f)\subseteq X\times Y$, which is an irreducible subvariety for which the projections $\pi_1:\Gamma(f)\rightarrow X$ and $\pi_2:\Gamma(f)\rightarrow Y$ are birational morphisms. Let $I(f)\subseteq X$ denote the finite set of points where $\pi_1$ does not admit a local inverse. We call this set the $indeterminacy$ set of $f$. Let $\mathcal{E}(f)=\pi_1\pi_2^{-1}(I(f^{-1})).$

If $g:Y\dashrightarrow Z$ is another birational map, the graph $\Gamma(g\circ f)$ of the composite map is the closure of the set $$\{(x,g(f(x)))\in X\times Z| x\in X-I(f), f(x)\in Y-I(g)\}.$$ This is equal to the set $$\Gamma(g)\circ\Gamma(f)=\{(x,z)\in X\times Z| (x,y)\in \Gamma(f), (y,z)\in \Gamma(g) \text{ for some $y\in Y$}\},$$ if and only if there is no component $V\subseteq \mathcal{E}(f)$ such that $f(V)\subseteq I(g)$. Here $f(V)$ means $\overline{f(V-I(f))}$.

\begin{thm}(see \cite{Hartshorne1977})Any birational morphism $\pi:X\rightarrow Y$ between smooth surfaces is a composition of finitely many point blowups.

\end{thm}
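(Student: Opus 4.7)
The plan is to prove this by induction on the Picard number difference $n(\pi) := \rho(X) - \rho(Y)$, which is a non-negative integer since a birational morphism of smooth projective surfaces induces an injection $\pi^\ast\colon N^1(Y) \hookrightarrow N^1(X)$ (a left inverse is provided by $\pi_\ast$, and $\pi_\ast \pi^\ast = \mathrm{id}$ by the projection formula). If $n(\pi) = 0$, I will show $\pi$ is an isomorphism, so the theorem holds with the empty composition.

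The key step is the \emph{factorization lemma}: if $\pi$ is not an isomorphism, there is a point $q \in Y$ and a birational morphism $\pi'\colon X \to \tilde{Y}$ such that $\pi = \sigma \circ \pi'$, where $\sigma\colon \tilde{Y} \to Y$ is the blowup of $Y$ at $q$. To produce $q$, I use Zariski's Main Theorem: since $\pi$ is birational but not an isomorphism, the inverse rational map $\pi^{-1}$ has a nonempty indeterminacy locus; pick $q$ in it. Then the fiber $\pi^{-1}(q)$ contains a curve (otherwise $\pi$ would be an isomorphism near $q$ by ZMT applied to $\pi$ composed with a projective embedding of $Y$, using smoothness of both surfaces). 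To get the factorization, I invoke the universal property of blowups: it suffices to check that $\pi^{-1}\mathfrak{m}_q \cdot \mathcal{O}_X$ is an invertible ideal sheaf on $X$, which is a local computation at each point of $\pi^{-1}(q)$ using smoothness of $X$ and the fact that $\mathfrak{m}_q = (u,v)$ for local parameters $u,v$ at the smooth point $q$; alternatively, one can construct $\pi'$ as the rational map $\sigma^{-1} \circ \pi$ and verify it extends to a morphism using properness of $\tilde{Y} \to Y$ and the valuative criterion on the local rings along the contracted curve.

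With the factorization lemma in hand, the induction closes: the new morphism $\pi'\colon X \to \tilde{Y}$ satisfies $n(\pi') = \rho(X) - \rho(\tilde{Y}) = \rho(X) - (\rho(Y) + 1) = n(\pi) - 1$, since blowing up a point on a smooth surface increases the Picard number by exactly one. By the induction hypothesis $\pi'$ is a composition of finitely many blowups, and composing with $\sigma$ expresses $\pi$ as one more such composition.

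The main obstacle is the factorization step, specifically establishing that $\pi$ really does factor through $\sigma$. The cleanest route is via the universal property, which reduces the problem to showing that the pullback of the maximal ideal $\mathfrak{m}_q$ is principal along the fiber $\pi^{-1}(q)$. This requires local analysis: at a point $x \in \pi^{-1}(q)$, smoothness of $X$ gives local parameters, and one must rule out that both components $\pi^\ast u$ and $\pi^\ast v$ vanish to the same order with linearly dependent leading terms only after a common factor is extracted, i.e., one must verify that the total transform contains a reduced exceptional component. This is precisely where the hypothesis that $X$ is a surface (dimension two) is used essentially.
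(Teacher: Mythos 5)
The paper gives no proof of this statement—it simply cites Hartshorne—and your argument is exactly the standard proof from that reference (Hartshorne V.5.3--V.5.4): induct on $\rho(X)-\rho(Y)$, and at each stage factor a non-isomorphism through the blowup of a point of indeterminacy of $\pi^{-1}$. The only place your write-up stops short of a complete argument is the heart of the factorization lemma, namely the verification that $\pi^{-1}\mathfrak{m}_q\cdot\mathcal{O}_X$ is invertible (equivalently, that after extracting the gcd of $\pi^*u,\pi^*v$ in the UFD $\mathcal{O}_{X,x}$ the remaining factors have no common zero); you correctly identify this as the essential local step where smoothness and $\dim=2$ enter, but it is sketched rather than carried out, exactly as in the textbook treatment you are reproducing.
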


The following theorem is a corollary of the previous theorem and the resolution of singularities of surface.

\begin{thm}(Factorization theorem)
Any birational map $f: X\dashrightarrow Y$ between smooth surfaces can be written as a composition $f=f_1\circ\cdots \circ f_l\circ \cdots \circ f_n$, where $f_i$ is a point blowup for $i=1,\cdots , l$ and a point blowdown for $i=l+1,\cdots , n$.

\end{thm}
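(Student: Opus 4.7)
The plan is to deduce the factorization theorem directly from the statement about birational morphisms quoted just above, together with resolution of singularities for surfaces.

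First I would construct a common resolution. Starting from the graph $\Gamma(f)\subseteq X\times Y$, which is an irreducible projective (possibly singular) surface, I would invoke the resolution of singularities for surfaces over an algebraically closed field of arbitrary characteristic (Abhyankar's theorem, already cited in the excerpt) to obtain a smooth projective surface $Z$ together with a proper birational morphism $Z\to \Gamma(f)$. Composing with the two projections $\pi_1\colon \Gamma(f)\to X$ and $\pi_2\colon \Gamma(f)\to Y$, which are birational morphisms by definition of the graph, I get two birational morphisms $p\colon Z\to X$ and $q\colon Z\to Y$ between smooth projective surfaces such that $f=q\circ p^{-1}$ as rational maps.

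Next I would apply the previous theorem to each of $p$ and $q$ separately. Since $p\colon Z\to X$ is a birational morphism between smooth surfaces, it is a composition $p=b_1\circ\cdots\circ b_r$ of point blowups (read from $X$ up to $Z$); equivalently, $p^{-1}\colon X\dashrightarrow Z$ is realized as the sequence of blowups $b_r^{-1},\ldots,b_1^{-1}$ performed successively on $X$. Similarly, $q\colon Z\to Y$ is a composition of point blowups when read from $Y$ up to $Z$, i.e.\ $q$ itself is a composition of point blowdowns from $Z$ down to $Y$. Concatenating, $f=q\circ p^{-1}$ is precisely a composition of blowups followed by blowdowns, which is the asserted form with $l=r$.

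The only mildly delicate point is the first step: the graph $\Gamma(f)$ need not be smooth, so the previous theorem is not directly applicable to its projections, and the use of resolution of singularities is essential. Once a smooth common resolution $Z$ is in hand, the rest is purely bookkeeping: applying the previous theorem twice and reversing the order on the $X$-side to convert ``$p$ is a sequence of blowups going up'' into ``$p^{-1}$ is a sequence of blowups going up from $X$''. No further obstacle is expected, since everything reduces to the two inputs already available.
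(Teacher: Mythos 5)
Your proof is correct and is exactly the argument the paper has in mind: it states the result as an immediate corollary of the preceding theorem together with resolution of singularities of surfaces, which is precisely your route via a smooth resolution $Z$ of the graph and the factorizations of the two projections $p$ and $q$ into point blowups. No gaps; the only residual issue is the paper's own indexing convention for which of the $f_i$ are applied first, which your description ($f=q\circ p^{-1}$, blowups from $X$ up to $Z$ followed by blowdowns from $Z$ down to $Y$) handles correctly in substance.
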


Then we recall the action of a birational map on the N\'eron-Severi group.

Let $f:X\dashrightarrow Y$ be a birational map between smooth surfaces, $\Gamma$ a desingularization of its graph, and denote by $\pi_1: \Gamma\rightarrow X,\pi_2: \Gamma\rightarrow Y$ the natural projections. Then we can define the following linear maps $$f^*=\pi_{1*} \pi_2^*: N^{1}(Y)_{\mathbb{R}}\rightarrow N^{1}(X)_{\mathbb{R}},$$ and $$f_*=\pi_{2*} \pi_1^*: N^{1}(X)_{\mathbb{R}}\rightarrow N^{1}(Y)_{\mathbb{R}}.$$ Observe that $f_*=f^{-1*}$.

\begin{pro}(see \cite{favre})
Let $f:X\dashrightarrow Y$ be a birational map between smooth surfaces.
\begin{points}
\begin{item}
The linear maps $f^*,f_*$ preserve $N^1\subseteq N^1_{\mathbb{R}}$.
\end{item}
\item
If $\alpha\in N^1(Y)_{\mathbb{R}}$ is nef, then $f^*\alpha\in N^1(X)_{\mathbb{R}}$ is nef.
\item
The maps $f^*,f_*$ are adjoint for the intersection form, i.e. $$(f^*\alpha\cdot \beta)=(\alpha\cdot f_*\beta),$$ for any $\alpha\in N^1(Y)_{\mathbb{R}}$ and $\beta\in N^1(X)_{\mathbb{R}}$.

\end{points}

\end{pro}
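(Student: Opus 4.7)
The plan is to reduce each of the three assertions to known properties of the two birational morphisms $\pi_1\colon\Gamma\to X$ and $\pi_2\colon\Gamma\to Y$, using the projection formula together with the fact, recalled just above, that every birational morphism between smooth projective surfaces factors as a composition of point blowups.

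First I would settle (i). Since $\pi_2$ is a genuine morphism, $\pi_2^*$ sends $N^1(Y)$ to $N^1(\Gamma)$ at the level of Cartier divisor classes, so integrality is obvious. For $\pi_{1*}$ I would argue blowup by blowup: if $\sigma\colon\tilde Z\to Z$ is a single point blowup with exceptional curve $E$, then $\sigma_*$ sends the class of $E$ to zero and the class of any other irreducible curve $C$ to the class of its image $\sigma(C)$, so $\sigma_*$ preserves the integral lattice. Composing these shows $\pi_{1*}N^1(\Gamma)\subseteq N^1(X)$, and hence $f^*=\pi_{1*}\pi_2^*$ preserves $N^1$; applying the same argument to $f^{-1}$ handles $f_*$.

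For (iii) I would simply compute using that both $\pi_1$ and $\pi_2$ are birational, hence of degree one. The projection formula for $\pi_1$ gives
$$(f^*\alpha\cdot\beta)=(\pi_{1*}\pi_2^*\alpha\cdot\beta)=(\pi_2^*\alpha\cdot\pi_1^*\beta),$$
while the projection formula for $\pi_2$ gives
$$(\alpha\cdot f_*\beta)=(\alpha\cdot\pi_{2*}\pi_1^*\beta)=(\pi_2^*\alpha\cdot\pi_1^*\beta),$$
so the two expressions coincide. For (ii), the class $\pi_2^*\alpha$ is nef on $\Gamma$: for any irreducible curve $C\subseteq\Gamma$ the projection formula yields $(\pi_2^*\alpha\cdot C)=(\alpha\cdot\pi_{2*}C)\geq 0$, since $\pi_{2*}C$ is either zero or an effective class on $Y$. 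For an arbitrary irreducible curve $D\subseteq X$, the total transform $\pi_1^*D$ is effective on $\Gamma$, so by (iii)
$$(f^*\alpha\cdot D)=(\pi_2^*\alpha\cdot\pi_1^*D)\geq 0$$
by nefness of $\pi_2^*\alpha$ together with the effectivity of $\pi_1^*D$, whence $f^*\alpha$ is nef.

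The only conceptual subtlety, which I would dispose of at the outset, is that the operators $f^*$ and $f_*$ must be shown independent of the chosen desingularization $\Gamma$ of the graph. Given two such resolutions, I would dominate them by a common third $\widetilde\Gamma$ and apply functoriality of pullback together with the projection formula for the intervening birational morphisms of smooth surfaces to identify the resulting definitions on $N^1$. This is routine bookkeeping and I do not expect it to cause genuine difficulty; the substantive content of the proposition is really packaged in the blowup factorization and the projection formula invoked above.
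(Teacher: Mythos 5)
The paper states this proposition without proof, simply citing \cite{favre}, so there is no in-text argument to compare against; your write-up supplies the standard proof and it is correct. Each step checks out: pushforward under a composition of point blowups preserves the integral lattice, the projection formula for the degree-one morphisms $\pi_1,\pi_2$ gives the adjunction in (iii), and nefness in (ii) follows since $\pi_2^*\alpha$ is nef on $\Gamma$ and $\pi_1^*D$ is effective (strict transform plus nonnegative exceptional contributions); the independence of the resolution $\Gamma$ is indeed routine via a common domination.
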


It is important to note that $f^*$ is not functorial in general. In fact, let $X,Y,Z$ be smooth surfaces, and $f:X\dashrightarrow Y$, $g:Y\dashrightarrow Z$ birational, then for any ample class  $\alpha\in N^1(Z)_{\mathbb{R}}$, $f^*g^*\alpha=(f\circ g)^*\alpha$ if and only if $I(\mathcal{E}(f)\bigcap I(g))=\emptyset.$

Let $\|.\|$ be any norm on $N^1_{\mathbb{R}}$.
\begin{pro}(See \cite{favre})
Let $h:X\dashrightarrow Y$ be a birational map between projective smooth surfaces. Then there is a constant $C>1$ such that for any birational map $f:X\dashrightarrow X$, then $$C^{-1}\|f^*\|\leq \|g^*\| \leq C\|f^*\|,$$ with $g:=h\circ f\circ h^{-1}.$
\end{pro}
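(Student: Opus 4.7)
The plan is to reduce the desired operator-norm inequality to a bound between the degrees $\deg_{L_X}(f)$ and $\deg_{L_Y}(g)$ for suitable ample classes $L_X$ on $X$ and $L_Y$ on $Y$. First I would observe that for any birational self-map $\phi\colon V\dashrightarrow V$ of a smooth projective surface $V$ equipped with an ample class $L$, the operator norm $\|\phi^*\|$ is comparable to the degree $\deg_L(\phi)=(\phi^*L\cdot L)$, with multiplicative constants depending only on $(V,L)$. Indeed, by the previous proposition $\phi^*$ preserves the nef cone; and on the nef cone any norm is comparable to the linear functional $(\,\cdot\,L)$, because $L$ lies in the interior of the nef cone, so the slice $\{(\alpha\cdot L)=1\}\cap\{\alpha\text{ nef}\}$ is compact. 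It follows that $\|\phi^*\|\asymp\deg_L(\phi)$, with constants depending only on $V$ and $L$.

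The core ingredient is then the submultiplicativity of the degree under composition, the same principle behind the bound $\deg_L(f^{m+n})\leq 2\deg_L(f^m)\deg_L(f^n)$ recalled in the introduction and proved in \cite{boucksomfavrejonsson}. Its natural extension to maps between different surfaces states that for smooth projective surfaces $A,B,C$ with ample classes $L_A,L_B,L_C$, there is a constant $D=D(L_A,L_B,L_C)$ such that for any birational maps $\alpha\colon A\dashrightarrow B$ and $\beta\colon B\dashrightarrow C$,
\[
\deg_{L_A,L_C}(\beta\circ\alpha)\leq D\,\deg_{L_A,L_B}(\alpha)\,\deg_{L_B,L_C}(\beta).
\]
Applying this estimate twice to the factorisation $g=h\circ f\circ h^{-1}$ I obtain
\[
\deg_{L_Y}(g)\leq D^{2}\,\deg_{L_X,L_Y}(h)\,\deg_{L_X}(f)\,\deg_{L_Y,L_X}(h^{-1}),
\]
in which the first and last factors are numerical invariants of $h$ alone. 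Translating via $\|\phi^*\|\asymp\deg_L(\phi)$ yields $\|g^*\|\leq C_1\|f^*\|$ with $C_1$ depending only on $h$.

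The reverse inequality is obtained by running the same argument with $(h,f)$ replaced by $(h^{-1},g)$, since then $h^{-1}\circ g\circ h=f$; this produces a constant $C_2$ depending only on $h^{-1}$. Setting $C:=\max(C_1,C_2,2)>1$ gives the stated bound. The only delicate point is bookkeeping: one has to ensure that every multiplicative constant entering the chain---the norm-versus-degree comparison, the submultiplicativity factor $D$, and the numerical degrees of $h$ and $h^{-1}$---depends solely on the fixed data $(X,Y,L_X,L_Y,h)$ and not on the varying map $f$. Since each such constant is manifestly a function of that fixed data, they can all be absorbed into the single constant $C$, and the proposition follows.
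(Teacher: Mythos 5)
The paper does not actually prove this proposition; it is quoted from Diller--Favre (the reference \cite{favre}), so there is no in-paper argument to compare against. Your proposal is correct and is essentially the standard proof from that reference: reduce operator norms to degrees via the nef cone, then use submultiplicativity of degrees under composition. The only step you should spell out more carefully is the passage from ``any norm is comparable to $(\cdot\,\cdot\,L)$ on the nef cone'' to the operator-norm statement $\|\phi^*\|\asymp\deg_L(\phi)$: one needs (a) that $\phi^*$ preserves the nef cone, so $\|\phi^*L\|\asymp(\phi^*L\cdot L)$; (b) that every class $\alpha$ with $\|\alpha\|\leq 1$ decomposes as a difference of nef classes of bounded norm (e.g.\ $\alpha=(\alpha+tL)-tL$ with $t$ fixed), each dominated by a fixed multiple of $L$ in the nef order, so that $\|\phi^*\alpha\|$ is controlled by $(\phi^*L\cdot L)$. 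Likewise, the submultiplicativity estimate for maps between different surfaces rests on the inequality $(\beta\circ\alpha)^*\gamma\leq\alpha^*\beta^*\gamma$ for nef $\gamma$ (the difference being effective, hence nonnegative against an ample class) together with the domination $\theta\leq c\,(\theta\cdot L_B)\,L_B$ for nef $\theta$; both are in \cite{favre} and \cite{boucksomfavrejonsson}, but since you invoke the extension to maps between distinct surfaces rather than self-maps, you should at least indicate that the proofs carry over verbatim. With those two points made explicit, the argument is complete.
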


\begin{prodefi}(\cite{Dinh2005},\cite{favre})\label{lamda1}
Let $f:X\dashrightarrow X$ be a dominating rational map of a
projective smooth surface, then the first dynamical degree
$$\lambda_1(f):=\lim_{n \rightarrow \infty}\|f^{n*}\|^{1/n}\geq 1$$
exists and is invariant under birational coordinate change and independent of the choice of the norm.

\end{prodefi}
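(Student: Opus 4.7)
The strategy is to reduce the statement about operator norms to the already discussed degree sequence $\deg_L(f^n) := (f^{n*}L \cdot L)$, and then apply Fekete's lemma.

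First, since $N^1(X)_{\mathbb{R}}$ is finite dimensional, any two norms on it are equivalent, so the induced operator norms on $\End(N^1(X)_{\mathbb{R}})$ are equivalent as well. Hence if $\lim_n \|f^{n*}\|^{1/n}$ exists for one choice of norm, it exists for all choices with the same value, settling the independence of norm. Fix now an ample integral class $L$ on $X$. I would then show that there is $C > 0$, independent of $n$, with $C^{-1}\deg_L(f^n) \leq \|f^{n*}\| \leq C\deg_L(f^n)$. The inequality $\deg_L(f^n) \leq \|f^{n*}\| \cdot \|L\|^2$ is immediate from the continuity of the intersection pairing. Conversely, pick a basis $L_1,\ldots, L_r$ of $N^1(X)_{\mathbb{R}}$ consisting of nef classes, and choose $K$ large enough that $KL - L_i$ is nef for every $i$. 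Since $f^{n*}$ preserves the nef cone (preceding proposition), pairing with $L$ gives $(f^{n*}L_i \cdot L) \leq K \deg_L(f^n)$. Because the linear functional $\beta \mapsto (\beta\cdot L)$ is positive on the closed convex nef cone minus the origin, it is equivalent there to any norm, yielding $\|f^{n*}L_i\| \leq C' \deg_L(f^n)$; expanding an arbitrary class in the basis $(L_i)$ then gives $\|f^{n*}\| \leq C \deg_L(f^n)$.

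With this comparison, existence of $\lambda_1(f)$ reduces to the existence of $\lim_n \deg_L(f^n)^{1/n}$. The submultiplicativity $\deg_L(f^{m+n}) \leq 2\deg_L(f^m)\deg_L(f^n)$ recalled in the introduction from \cite{boucksomfavrejonsson} makes the sequence $a_n := \log(2\deg_L(f^n))$ subadditive, so Fekete's lemma produces $\lim a_n/n$ and hence $\lim \deg_L(f^n)^{1/n}$. For the lower bound $\lambda_1(f) \geq 1$, the previous proposition tells us that $f^{n*}$ preserves the lattice $N^1(X) \subset N^1(X)_{\mathbb{R}}$, so $\deg_L(f^n) \in \mathbb{Z}$; since $f^n$ is dominating, $f^{n*}L$ is a nonzero nef class whose intersection with the ample class $L$ is a positive integer, forcing $\deg_L(f^n) \geq 1$.

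Finally, for birational invariance, write $g = h f h^{-1}$ with $h : X \dashrightarrow Y$ birational; then $g^n = h f^n h^{-1}$, and the preceding proposition supplies a constant $C$ depending only on $h$ with $C^{-1}\|f^{n*}\| \leq \|g^{n*}\| \leq C\|f^{n*}\|$. Taking $n$-th roots and letting $n \to \infty$ gives $\lambda_1(g) = \lambda_1(f)$. The only delicate step in this plan is the uniform comparison $\|f^{n*}\| \asymp \deg_L(f^n)$ with constants independent of $n$; once this is secured (via nef-cone preservation and Hodge-index-type positivity of $L$ on nef classes), the remainder is a formal combination of Fekete's lemma, integrality of intersection numbers on a smooth projective surface, and the previous conjugation proposition.
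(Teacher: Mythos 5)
Your argument is correct, and the paper itself gives no proof of this Proposition--Definition (it simply cites \cite{Dinh2005} and \cite{favre}), where the standard argument is exactly the one you give: compare $\|f^{n*}\|$ with $\deg_L(f^n)$ via nef-cone preservation and the strict positivity of $\beta\mapsto(\beta\cdot L)$ on the nef cone, then apply Fekete's lemma to the submultiplicative sequence $2\deg_L(f^n)$ and the conjugation estimate for birational invariance. The one step worth spelling out is why $(\beta\cdot L)>0$ for every nonzero nef $\beta$ (so that the functional is norm-equivalent on the cone), but your integrality argument for $\deg_L(f^n)\geq 1$ and the rest of the reduction are sound.
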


We can defined $\lambda_1$ in another form.
\begin{prodefi}(See \cite{favre})
Let $f:X\dashrightarrow X$ be a dominating rational map of a
projective smooth surface, let $\omega\in N^1(X)_{\mathbb{R}}$ be a big and nef class. We define $\deg_{\omega}(f)=(f^*\omega\cdot \omega)$.

Then $$\lambda_1(f):=\lim_{n \rightarrow \infty}((f^n)^*\omega\cdot \omega)^{1/n}=\lim_{n \rightarrow \infty}(\deg_{\omega}
f^n)^{1/n}.$$

If $L$ is an ample line bundle on $X$, We also write $\deg_L(f)$ for $\deg_{[L]}(f)$.

\end{prodefi}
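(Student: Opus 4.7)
My plan is to establish that the limit $\lim_n (f^{n*}\omega\cdot\omega)^{1/n}$ exists and equals the dynamical degree $\lambda_1(f)$ defined via operator norms in Prodefi \ref{lamda1}. The idea is to sandwich $\deg_\omega(f^n)$ between constant multiples of $\deg_A(f^n)$ for a fixed ample class $A$, and then identify $\lim_n \deg_A(f^n)^{1/n}$ with $\lim_n \|f^{n*}\|^{1/n}$ through the Hodge index theorem.

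To compare $\omega$ with an ample class, fix any ample $A \in N^1(X)_\R$. Bigness of $\omega$ places it in the interior of the pseudo-effective cone, so $\omega - aA$ is pseudo-effective for some $a>0$; conversely, since $A$ is ample and $\omega$ is nef, $bA - \omega = b(A - \omega/b)$ is ample (hence pseudo-effective) for $b$ sufficiently large. Because $f$ is birational, $f^{n*}$ preserves pseudo-effectivity on $N^1(X)_\R$ (the pullback of an effective divisor through a desingularization of the graph of $f^n$ followed by the pushforward remains effective), and it preserves nef-ness by the proposition quoted earlier in the excerpt. Applying $f^{n*}$ to the relations $aA \leq \omega \leq bA$, then intersecting the resulting pseudo-effective classes once with $f^{n*}\omega$ or $f^{n*}A$ and once with $\omega$ or $A$, and repeatedly using that the pairing of a pseudo-effective class with a nef class is nonnegative, yields
\[
a^2 \deg_A(f^n) \;\leq\; \deg_\omega(f^n) \;\leq\; b^2 \deg_A(f^n).
\]
Hence $\lim_n \deg_\omega(f^n)^{1/n}$ exists if and only if $\lim_n \deg_A(f^n)^{1/n}$ does, and they coincide.

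It remains to identify $\lim_n \deg_A(f^n)^{1/n}$ with $\lambda_1(f)$. The upper bound $\deg_A(f^n) = (f^{n*}A \cdot A) \leq \|f^{n*}\|\,\|A\|^2$ is immediate from continuity of the intersection form. For the matching lower bound, Theorem \ref{Hodge} applied to the ample class $A$ shows that the linear functional $\alpha \mapsto (\alpha \cdot A)$ vanishes only at the origin on the closed nef cone, so by compactness of the unit sphere intersected with that cone there is a constant $c>0$ with $\|\alpha\| \leq c^{-1}(\alpha\cdot A)$ for every nef $\alpha$. Choosing a basis $A_1,\dots,A_r$ of $N^1(X)_\R$ consisting of ample classes, the operator norm $\|f^{n*}\|$ is controlled by $\max_i \|f^{n*}A_i\|$, each of which (being a norm of a nef class) is comparable to $\deg_{A_i}(f^n)$, and hence to $\deg_A(f^n)$ by the sandwich above. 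Taking $n$-th roots and combining both bounds closes the argument. The main obstacle is this last step, which rests entirely on the Hodge index theorem; the earlier sandwich is a routine manipulation of pseudo-effective and nef classes together with the birational preservation of both properties under $f^{n*}$.
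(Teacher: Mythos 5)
Your argument is correct, and it is the standard one: the paper itself gives no proof of this Proposition--Definition (it simply cites Diller--Favre), and the route you take --- sandwiching $\deg_\omega(f^n)$ between multiples of $\deg_A(f^n)$ via pseudo-effectivity and nefness, then comparing $\deg_A(f^n)$ with $\|f^{n*}\|$ using the Hodge index theorem --- is exactly how this is established in the cited literature. The only cosmetic remark is that you invoke birationality of $f$ to preserve pseudo-effectivity, whereas the statement allows any dominating rational map; the same graph argument ($\pi_{1*}\pi_2^*$ of an effective class is effective) works verbatim in that generality.
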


\begin{prodefi}(see \cite{favre})
Let $f:X\dashrightarrow X$ be a birational map on a smooth surface, for some ample class $\omega\in N^1(X)_{\mathbb{R}}$. Then $f$ is $algebraically$ $stable$ if and only if one of the following holds:
\begin{points}
\item for any $\alpha\in N^1(X)_{\mathbb{R}}$ and any $n\in \mathbb{N}$, one has $(f^*)^n\alpha= (f^n)^*\alpha$;
\item there is no curve $V\subseteq X$ such that $f^n(V)\subseteq I(f)$ for some integer $n\geq 0;$
\item for any $n\geq 0$ one has $(f^n)^*\omega=(f^n)^*\omega .$

\end{points}
\end{prodefi}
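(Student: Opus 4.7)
The proposition asserts the equivalence of three conditions. Condition (iii) as printed is surely a typo; by comparison with (i) the intended statement is $(f^*)^n \omega = (f^n)^* \omega$ for every $n \geq 0$. My plan is to establish the cycle (ii) $\Rightarrow$ (i) $\Rightarrow$ (iii) $\Rightarrow$ (ii).

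For (ii) $\Rightarrow$ (i) I proceed by induction on $n$; the case $n = 1$ is trivial. For the inductive step I apply the functoriality criterion of the previous proposition to the decomposition $f^{n+1} = f \circ f^n$: it yields $(f^{n+1})^* = (f^n)^* f^*$ precisely when no curve contracted by $f^n$ has image in $I(f)$, which is exactly condition (ii) at level $n$ (note that any $V$ with $f^n(V)$ contained in the finite set $I(f)$ is automatically contracted by $f^n$). Combined with the induction hypothesis, this gives $(f^{n+1})^* = (f^*)^{n+1}$ as operators on $N^1(X)_\mathbb{R}$. The implication (i) $\Rightarrow$ (iii) is immediate, since (iii) is just (i) tested on the single class $\omega$.

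For (iii) $\Rightarrow$ (ii) I argue by contraposition. Assume (ii) fails, and let $m \geq 1$ be the smallest integer for which some curve $V \subseteq X$ satisfies $f^m(V) \subseteq I(f)$. By the minimality of $m$ and a rerun of the induction of the previous paragraph, $(f^k)^* = (f^*)^k$ holds for all $k \leq m$. The functoriality criterion applied to $f^{m+1} = f \circ f^m$ then shows, via the pathological curve $V$, that $(f^{m+1})^* \neq (f^m)^* f^* = (f^*)^{m+1}$ as linear operators on $N^1(X)_\mathbb{R}$.

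The main obstacle is to upgrade this operator-level inequality into a genuine failure $(f^*)^{m+1}\omega \neq (f^{m+1})^*\omega$ on the specific ample class $\omega$. To do this I pass to a smooth surface $\Gamma$ dominating both the graph of $f^{m+1}$ and the iterated fibre product resolving $f^m$ and then $f$, and pull $\omega$ back to $\Gamma$ along the two routes. Their difference, pushed down to $X$, is an effective $\mathbb{R}$-divisor supported on curves extracted by the finer composite resolution but contracted by the stepped one; the presence of $V$ ensures that this divisor is non-zero. Since $\omega$ is ample, intersecting with $\omega$ produces a strictly positive number, so $(f^*)^{m+1}\omega$ and $(f^{m+1})^*\omega$ already differ in their intersection with $\omega$, contradicting (iii).
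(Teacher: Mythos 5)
The paper never proves this Proposition-Definition: it is recalled from Diller--Favre \cite{favre} as background, so there is no in-paper argument to measure yours against; I can only assess your proof on its own terms. Your reading of (iii) as a typo for $(f^*)^n\omega=(f^n)^*\omega$ is surely right, and your cycle (ii)$\Rightarrow$(i)$\Rightarrow$(iii)$\Rightarrow$(ii) is the standard route. The implications (ii)$\Rightarrow$(i) and (i)$\Rightarrow$(iii) are fine: the induction via the functoriality criterion applied to $f^{n+1}=f\circ f^n$ is correctly set up, your remark that any curve with $f^n(V)\subseteq I(f)$ is automatically contracted by $f^n$ is exactly what makes condition (ii) coincide with the hypothesis of that criterion, and since ample classes span $N^1(X)_{\mathbb{R}}$ the criterion yields the identity of linear operators even if one only knows it on ample classes.

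The one place where your argument is not yet a proof is the crux of (iii)$\Rightarrow$(ii): the assertion that $(f^*)^{m+1}\omega-(f^{m+1})^*\omega$ is a nonzero effective class. That statement is true, but it is essentially the entire content of the ``only if'' half of the functoriality criterion, and you invoke it rather than prove it. Moreover, your description of its support is off: a divisor ``supported on curves extracted by the finer composite resolution but contracted by the stepped one'' would push forward to $0$ on $X$, so as written the class you exhibit could vanish. The correct mechanism (Diller--Favre, Prop.~1.13) is the following. Write $f^*\omega=p_*q^*\omega$ on a resolution $p,q:\Gamma_f\to X$ of $f$; since $f^m(V)=x\in I(f)$, some exceptional prime $E$ of $p$ lying over $x$ satisfies $\mathrm{ord}_E(q^*\omega)>0$, i.e.\ $f^*\omega$ genuinely ``charges'' the point $x$. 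Pulling $f^*\omega$ back through $f^m$ then forces the contracted curve $V\subset X$ itself to appear with a strictly positive coefficient $c$ in $(f^m)^*(f^*\omega)$, while it does not appear in $(f^{m+1})^*\omega$; the difference is effective and contains $c[V]$, so intersecting with the ample class $\omega$ gives $((f^*)^{m+1}\omega\cdot\omega)-((f^{m+1})^*\omega\cdot\omega)\geq c(V\cdot\omega)>0$. With that computation supplied, your proof is complete.
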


In particular, if $X=\mathbb{P}^2$, then $f$ is algebraically stable if and only is $\deg (f^n)=(\deg f)^n$ for any $n\in \mathbb{N}.$

\begin{thm}(\cite{favre})\label{AS modele}
Let $f:X\dashrightarrow X$ be a birational map of a projective
smooth surface, then there is a proper modification $\pi:
\widehat{X}\rightarrow X$ that lifts $f$ to an algebraically stable map.

\end{thm}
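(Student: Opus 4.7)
My plan is to prove the theorem by induction, successively blowing up indeterminacy points that obstruct algebraic stability. First I would reformulate the failure of stability concretely: using the definition given just above, $f$ fails to be algebraically stable precisely when there exists an irreducible curve $V \subseteq X$, eventually contracted by some iterate, and a smallest integer $n \geq 0$ with $f^n(V) \in I(f)$. Equivalently, for an ample class $\omega$, the class $\Delta_n := (f^*)^n \omega - (f^n)^* \omega$ is a nonzero effective divisor whose support consists exactly of such ``bad'' components and their forward orbits up to time $n$.

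Second, given a minimal such pair $(V,n)$, I would set $p = f^n(V) \in I(f)$ and form the blow-up $\pi \colon X_1 \to X$ at $p$, with exceptional divisor $E_p$. On the new model, the lifted map $f_1 = \pi^{-1} \circ f \circ \pi$ sends the strict transform of $V$ after $n$ iterations into the curve $E_p$ rather than to the single point $p$. A direct local computation, using the factorization theorem and the fact that indeterminacy points are isolated, shows that either this orbit is no longer bad on $X_1$, or it is bad with strictly shorter distance to the nearest new indeterminacy point on $X_1$, so some well-defined invariant has dropped.

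The main obstacle is proving termination — i.e.\ that only finitely many blow-ups are needed. For this I would introduce a numerical invariant, for example
\[
N(f) = \sum_{V} c(V),
\]
summing over irreducible components $V$ of $\mathcal{E}(f^k)$ for $k$ up to a fixed bound, with $c(V)$ the smallest integer $n \geq 0$ such that $f^n(V) \in I(f)$ (or $+\infty$ if no such $n$ exists, in which case $V$ contributes $0$). Since $\mathcal{E}(f)$ and $I(f)$ are both finite sets, $N(f)$ is a finite non-negative integer. The key verification is that the blow-up of $p = f^n(V)$ strictly decreases $N$: the offending component $V$ either leaves the bad set or has its $c$-value strictly reduced, while the exceptional divisor $E_p$ is not itself contracted by $f_1$ to an indeterminacy point. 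The delicate part, and where I expect the work to lie, is confirming that this single blow-up does not create new bad components whose $c$-values compensate for the decrease; this is handled by analyzing how strict transforms of $\mathcal{E}(f)$ interact with $E_p$ under the lift, using that $f_1$ acts as an isomorphism near the generic point of $E_p$. Once termination is established, the resulting surface $\widehat{X}$ is the desired proper modification on which the lift of $f$ is algebraically stable.
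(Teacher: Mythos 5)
The paper does not actually prove this statement: it is quoted from Diller--Favre \cite{favre}, so the only meaningful comparison is with the argument in that reference. Your overall strategy --- find a curve $V$ and a minimal $n$ with $f^{n}(V)\in I(f)$, blow up to destroy the obstruction, and induct on a complexity measure --- is indeed the shape of the known proof, and your reformulation of instability via the effective class $(f^{*})^{n}\omega-(f^{n})^{*}\omega$ is fine.

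The genuine gap is the termination argument, which is the entire content of the theorem, and your proposal concedes the crux instead of closing it: the invariant $N(f)=\sum_{V}c(V)$ is not monotone under the blow-up of $p=f^{n}(V)$. Two concrete failure modes. (i) Your assertion that $E_{p}$ is not contracted by $f_{1}$ is false in general: if resolving $f$ at $p$ requires a tower of infinitely near blow-ups (already the case for a quadratic map of $\mathbb{P}^{2}$ with a single base point), the first exceptional divisor over $p$ is contracted by the once-lifted map, so $E_{p}$ is a brand-new contracted curve whose forward orbit --- and hence its contribution to $N(f_{1})$ --- is not controlled by any data of $f$ on $X$. (ii) Any other component $W\subseteq\mathcal{E}(f)$ with $f(W)=p$ has its image moved into $E_{p}$ by $f_{1}$; if the strict transform of $W$ is contracted to a point $e\in E_{p}$, its new $c$-value is governed by the $f_{1}$-orbit of $e$, which can exceed the old value $c(W)=1$ by an arbitrary amount and swamp the decrease of $c(V)$ by $1$. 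So the "key verification" that $N$ strictly drops fails exactly at the step you flag as "where I expect the work to lie". (A smaller slip: the sum should run over the finitely many components of $\mathcal{E}(f)$ itself; there is no natural "fixed bound" on $k$ for $\mathcal{E}(f^{k})$, whose components are just iterated preimages of those of $\mathcal{E}(f)$.) The published proof avoids this trap by blowing up the whole orbit $f(V),\dots,f^{n}(V)$ at once and by bounding the total number of blow-ups ever performed against a finiteness statement attached to the exceptional curves of $f$ and $f^{-1}$, rather than against a per-curve orbit length; some such global accounting is indispensable here.
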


\subsection{Classes on the Riemann-Zariski space}
All facts in this subsection can be fined in \cite{boucksomfavrejonsson}. Let $X$ be a projective smooth surface.

For a birational morphism $\pi:X_{\pi}\rightarrow X$, where $X_{\pi}$ is a smooth surface, up to isomorphism, $\pi$ is a finite composition of point blowups. If $\pi$ and $\pi'$ are two  blowups of $X$, we say that $\pi'$ $dominates$ $\pi$ and write $\pi'\geq \pi$ if there exists a birational morphism $\mu:X_{\pi'}\rightarrow X_{\pi}$ such that $\pi'=\pi\circ\mu$. The $Riemann-Zariski\,\, space$ of $X$ is the projective limit $$\mathfrak{X}:=\varprojlim_{\pi} X_{\pi}.$$

\begin{defi} The space of Weil classes of $\mathfrak{X}$ is the projective limit $$W(\mathfrak{X}):=\varprojlim_{\pi}N^1(X_{\pi})_{\mathbb{R}}$$ with respect to the pushforward arrows. The space of Cartier classes on $\mathfrak{X}$ is the inductive limit $$C(\mathfrak{X}):=\varinjlim_{\pi}N^1(X_{\pi})_{\mathbb{R}}$$ with respect to the pullback arrow.
\end{defi}

Concretely, a Weil class $\alpha\in W(\mathfrak{X})$ is given by its $incarnations$ $\alpha_{\pi}\in N^1(X_{\pi})_{\mathbb{R}}$, compatible with pushforwards; that is, $\mu_*\alpha_{\pi'}=\alpha_{\pi}$ as soon as $\pi'=\pi\circ\mu$.

The projection formula  shows that there is an injection $C(\mathfrak{X})\subseteq W(\mathfrak{X})$, so that a Cartier class is a Weil class. In fact, if $\alpha\in N^1(X_{\pi})_{\mathbb{R}}$ is a class in some blowup $X_{\pi}$ of $X$, then $\alpha$ defines a Cartier class, also denoted by $\alpha$, whose incarnation $\alpha_{\pi'}$ in any blowup $\pi'=\pi\circ\mu$ dominating $\pi$ is given by $\alpha_{\pi'}=\mu^*\alpha$. We say that $\alpha$ is $determined$ in $X_{\pi}$. Each Cartier class is obtained in that way.

The spaces $C(\mathfrak{X})$ and $W(\mathfrak{X})$ are birational invariants of $X$. Once the model $X$ is fixed, an alternative and somewhat more explicit description of these spaces can be given in terms of exceptional divisors.

\begin{defi}\label{f1.3}
The set $\mathcal{D}$ of exceptional primes over $X$ is defined as the set of all exceptional prime divisors of all blowups $X_{\pi}\rightarrow X$ modulo the following equivalence relation: two prime divisors $E$ and $E'$ on $X_{\pi}$ and $X_{\pi'}$ are equivalent if the induced birational map $X_{\pi}\dashrightarrow X_{\pi'}$ sends $E$ onto $E'$.
\end{defi}

One can construct an explicit basis for the vector space $C(\mathfrak{X})$ as follows. Let $\alpha_E\in C(\mathfrak{X})$ be the Cartier class determined by the class of $E$ on a model $X_{\pi_E}$. We see that in fact if $E$ and $E'$ are equivalent in the sense in Definition \ref{f1.3} then $\alpha_E=\alpha_{E'}$.
Write $\mathbb{R}^{(\mathcal{D})}$ for the sum $\bigoplus_{\mathcal{D}}\mathbb{R}$.

\begin{pro}
The map $N^1(X)_{\mathbb{R}}\oplus \mathbb{R}^{\mathcal(D)}\rightarrow C(\mathfrak{X})$, sending $\alpha\in N^1(X)_{\mathbb{R}}$ to the Cartier class it determines, and $E\in \mathcal{D}$ to $\alpha_E$ is an isomorphism.
\end{pro}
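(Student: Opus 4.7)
The plan is to reduce the statement to a standard decomposition of the N\'eron-Severi group of a composition of point blowups. Specifically, I would first establish that for any birational morphism $\pi\colon X_\pi\to X$ between smooth projective surfaces---necessarily a composition of finitely many point blowups---one has
$$N^1(X_\pi)_\R \;=\; \pi^*N^1(X)_\R \;\oplus\; \bigoplus_{E\in \mathcal{D}_\pi}\R\cdot(\alpha_E)_\pi,$$
where $\mathcal{D}_\pi\subset \mathcal{D}$ is the finite set of exceptional primes realized on $X_\pi$ and $(\alpha_E)_\pi$ denotes the incarnation on $X_\pi$ of the Cartier class $\alpha_E$. I would prove this by induction on the number of blowups, starting from the elementary single-blowup formula $N^1(X')_\R = \mu^*N^1(X_\pi)_\R\oplus\R[F]$, and then replace the natural strict-transform basis $\{[\widetilde{E}]\}_E$ by the pulled-back basis $\{(\alpha_E)_\pi\}_E$ via a unipotent triangular change of basis.

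Granted this decomposition, the remaining steps are routine. First, I would check that the map is well-defined: for $\alpha\in N^1(X)_\R$ the associated Cartier class lives already on $X$, while for $E\in \mathcal{D}$ the Cartier class $\alpha_E$ is independent of the representative model $X_{\pi_E}$ chosen, thanks to the equivalence relation of Definition \ref{f1.3}. Next I would establish injectivity: if $(\alpha,(c_E))$ maps to zero then only finitely many $c_E$ are nonzero, so one may choose a single model $X_\pi$ dominating each $X_{\pi_E}$ with $c_E\neq 0$; the relation $\pi^*\alpha+\sum_E c_E(\alpha_E)_\pi=0$ in $N^1(X_\pi)_\R$, combined with the decomposition, forces $\alpha=0$ and every $c_E=0$. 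Finally, for surjectivity, any Cartier class is determined on some $X_\pi$ and its incarnation can be written uniquely as $\pi^*\alpha+\sum_{E\in\mathcal{D}_\pi}c_E(\alpha_E)_\pi$, which is exactly the image of $(\alpha,(c_E))\in N^1(X)_\R\oplus\R^{(\mathcal{D})}$.

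The main obstacle is the triangular change-of-basis step: one must verify that each $(\alpha_E)_\pi$ equals the strict transform $[\widetilde{E}]$ plus a nonnegative integral combination of $[\widetilde{E'}]$, where $E'$ ranges over the exceptional primes introduced after $E$ in a chosen blowup sequence producing $\pi$. This amounts to tracking how the pullback $\mu^*[E]$ picks up multiplicities along the exceptional divisors of later blowups centered on $E$. Once this unipotent triangularity is established, the direct-sum decomposition and the injectivity and surjectivity arguments follow immediately from elementary linear algebra.
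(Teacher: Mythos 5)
Your argument is correct and is essentially the standard one: the paper gives no proof of this proposition (it defers to \cite{boucksomfavrejonsson}), and the inductive blowup decomposition of $N^1(X_\pi)_\R$ together with passage to a common dominating model for injectivity, plus the fact that every Cartier class is determined on some finite model for surjectivity, is exactly the argument found there. The one point to make explicit is that $\alpha_E$ must be taken to be determined by $[E]$ on the \emph{minimal} model on which $E$ appears (otherwise it is not independent of the representative), so that $(\alpha_E)_\pi$ is the total transform of $E$; with that convention your unipotent triangular comparison with the strict-transform basis, and hence the direct-sum decomposition, goes through.
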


We now describe $W(\mathfrak{X})$ in terms of exceptional primes. If $\alpha\in W(\mathfrak{X})$ is a given Weil class, let $\alpha_X\in N^1(X)_{\mathbb{R}}$ be its incarnation on $X$. For each $\pi$, the Cartier class $\alpha_{\pi}-\alpha_X$ is determined on $X_{\pi}$ by a unique $\mathbb{R}$-divisor $Z_{\pi}$ exceptional over $X$. If  $E$ is a $\pi$-exceptional prime, we set $\ord_E(\alpha):=\ord_E(Z_{\pi})$ so that $Z_{\pi}=\sum_E \ord_E(Z_{\pi})E.$ It only depends on the class of $E$ in $\mathcal{D}$, by the projection formula. Let $\mathbb{R}^{\mathcal{D}}$ denote the space of all real-valued $\mathbb{R}$ functions on $\mathcal{D}$. Then we obtain a map $W(\mathfrak{X})\rightarrow N^1(X)_{\mathbb{R}}\times \mathbb{R}^{\mathcal{D}}$, which is a homeomorphism (see \cite{boucksomfavrejonsson} a the proof). Then by this homeomorphism, any Weil class $\alpha$ corresponds to a point $(\omega_{\alpha},(\alpha_E)_{E\in \mathcal{D}})\in N^1(X)_{\mathbb{R}}\times \mathbb{R}^{\mathcal{D}}$.

For each $\pi$, the intersection pairing $N^1(X)_{\mathbb{R}}\times N^1(X)_{\mathbb{R}}\rightarrow \mathbb{R}$ is denoted by $(\alpha\cdot \beta )_{X_{\pi}}$. By the pull-back formula, it induces a pairing $W(\mathfrak{X})\times C(\mathfrak{X})\rightarrow \mathbb{R}$ which is denoted by $( \alpha \cdot \beta ).$

\begin{pro}
The intersection pairing induces a topological isomorphism between $W(\mathfrak{X})$ and $C(\mathfrak{X})^*$ endowed with its weak-*topology.

\end{pro}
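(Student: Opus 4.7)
The plan is to construct the natural evaluation map $\Phi\colon W(\mathfrak{X})\to C(\mathfrak{X})^*$ sending a Weil class $\alpha$ to the linear form $\beta\mapsto(\alpha\cdot\beta)$, and then to verify in turn well-definedness, injectivity, surjectivity, and bi-continuity. First I would check that $\Phi(\alpha)$ is well defined as a linear form on $C(\mathfrak{X})$: a Cartier class $\beta$ is represented on some blow up $X_\pi$, and the value $(\alpha_\pi\cdot\beta)_{X_\pi}$ must not depend on the chosen model. This follows from the projection formula combined with the compatibility $\mu_*\alpha_{\pi'}=\alpha_\pi$ built into the definition of a Weil class: for $\pi'=\pi\circ\mu\geq\pi$, one has $(\alpha_{\pi'}\cdot\mu^*\beta)_{X_{\pi'}}=(\mu_*\alpha_{\pi'}\cdot\beta)_{X_\pi}=(\alpha_\pi\cdot\beta)_{X_\pi}$. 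Continuity of $\Phi(\alpha)$ for the inductive-limit topology then reduces to continuity of each finite-dimensional pairing $(\alpha_\pi\cdot\,-\,)$ on $N^1(X_\pi)_{\mathbb{R}}$, which is automatic.

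For injectivity, if $\Phi(\alpha)=0$ then each incarnation $\alpha_\pi$ pairs trivially against every class in $N^1(X_\pi)_{\mathbb{R}}$; since the intersection form on a smooth projective surface is non-degenerate (Theorem \ref{Hodge}), this forces $\alpha_\pi=0$ for every $\pi$, hence $\alpha=0$. For surjectivity, given $\ell\in C(\mathfrak{X})^*$, I would restrict $\ell$ to each $N^1(X_\pi)_{\mathbb{R}}$ (viewed inside $C(\mathfrak{X})$ via pullback) and appeal to non-degeneracy to produce a unique $\alpha_\pi\in N^1(X_\pi)_{\mathbb{R}}$ with $\ell(\beta)=(\alpha_\pi\cdot\beta)_{X_\pi}$ for $\beta\in N^1(X_\pi)_{\mathbb{R}}$. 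The family $(\alpha_\pi)$ is compatible under pushforward: for $\pi'\geq\pi$ and any test class $\beta\in N^1(X_\pi)_{\mathbb{R}}$, adjunction gives $(\mu_*\alpha_{\pi'}\cdot\beta)_{X_\pi}=(\alpha_{\pi'}\cdot\mu^*\beta)_{X_{\pi'}}=\ell(\mu^*\beta)=\ell(\beta)=(\alpha_\pi\cdot\beta)_{X_\pi}$, and non-degeneracy again yields $\mu_*\alpha_{\pi'}=\alpha_\pi$. Thus $\alpha:=(\alpha_\pi)_{\pi}$ is a well-defined Weil class with $\Phi(\alpha)=\ell$.

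Finally I would compare the two topologies. The projective-limit topology on $W(\mathfrak{X})$ is the coarsest one making every incarnation map $\alpha\mapsto\alpha_\pi$ continuous, while the weak-$*$ topology on $C(\mathfrak{X})^*$ is the coarsest one making every evaluation $\ell\mapsto\ell(\beta)$ continuous. Under $\Phi$ these two collections of test functionals coincide: each $\alpha_\pi$ is determined by its pairings against a basis of $N^1(X_\pi)_{\mathbb{R}}$ (whose elements are Cartier classes), and conversely every Cartier class lives in some $N^1(X_\pi)_{\mathbb{R}}$, so the initial topology induced on $W(\mathfrak{X})$ via $\Phi$ agrees with its projective-limit topology, making $\Phi$ a homeomorphism. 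The main delicate point throughout is the bookkeeping across the directed system of blow ups: keeping track of whether the projection formula is invoked in the $\pi$- or $\pi'$-direction, and always exploiting non-degeneracy in a \emph{finite-dimensional} $N^1(X_\pi)_{\mathbb{R}}$ before attempting to pass to the limit. Once this is handled carefully, the rest is a formal consequence of the definitions of the projective and inductive limits.
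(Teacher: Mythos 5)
The paper states this proposition without proof, quoting it from Boucksom--Favre--Jonsson, so there is no in-text argument to compare against; your proof is the standard duality argument and it is correct. All the key points are in place: well-definedness of the pairing via the projection formula and the pushforward-compatibility of Weil classes, injectivity and surjectivity via non-degeneracy of the intersection form on each finite-dimensional $N^1(X_\pi)_{\mathbb{R}}$, and the identification of the projective-limit topology with the weak-$*$ topology by matching the two families of test functionals through that same non-degeneracy.
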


\begin{defi}  A Weil class $\alpha=(\omega_{\alpha},(\alpha_E)_{E\in \mathcal{D}})$ is said to be in $\mathbb{L}^2$ if and only if $\sum_{E\in \mathcal{D}}a_E^2<+\infty$.

\end{defi}
If $\alpha$ is a Weil class, then $\alpha$ is $\mathbb{L}^2$ if and only if $\inf{(\alpha_X\cdot \alpha_X)}>-\infty$. We denote by $\mathbb{L}^2(\mathfrak{X})$ the space of $\mathbb{L}^2$-classes.
If $\omega\in C(\mathfrak{X})$ is a given class with $(\omega^2)>0$, the intersection form is negative definite on its orthogonal complement $\omega^{\perp}:=\{\alpha\in C(\mathfrak{X})|(\alpha\cdot \omega)=0\}$ as a consequence of the Hodge index theorem applied to each $N^1(X_{\pi})_{\mathbb{R}}$. Then $C(\mathfrak{X})=\mathbb{R}\omega\oplus \omega^{\perp}$ . We let $\mathbb{L}^2(\mathfrak{X}):=\mathbb{R}\omega\oplus \overline{\omega^{\perp}},$ where $\overline{\omega^{\perp}}$ is the completion in the usual sense of $\omega^{\perp}$ endowed the negative definite quadratic form $\alpha\rightarrow(\alpha^2)$. One can define a norm $\|.\|$ on $\mathbb{L}^2(\mathfrak{X})$, such that $\|t\omega\oplus \alpha\|=t^2-(\alpha^2)$, which makes $(\mathbb{L}^2(\mathfrak{X}),\|.\|)$ to a Hilbert space. Though this norm depends on the choice of $\omega$, the topological vector space $\mathbb{L}^2(\mathfrak{X})$ does not depend on.

\begin{pro}
There is a natural continuous injection $\mathbb{L}^2(\mathfrak{X})\rightarrow W(\mathfrak{X})$,and the topology on $\mathbb{L}^2(\mathfrak{X})$ induced by the topology of $W(\mathfrak{X})$ coincides with its weak topology as a Hilbert space.

If $\alpha\in W(\mathfrak{X})$ is a given Weil class, then the intersection number $\alpha_{\pi}$ is a decreasing function of $\pi$, and $\alpha\in \mathbb{L}^2(\mathfrak{X})$ if and only if $(\alpha_{\pi})$ is bounded from below, in which case, $(\alpha^2)=\lim_{\pi}(\alpha_{\pi}^2)$.
\end{pro}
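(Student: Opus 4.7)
The plan is to reduce everything to the second half of the statement (monotonicity of $(\alpha_\pi^2)$ and the $\mathbb{L}^2$ criterion), from which the first half follows by soft Hilbert-space arguments. The key computations take place inside each fixed blowup $X_\pi$ and rely on two inputs: the projection formula, and the Hodge index theorem on $X_\pi$, which makes the intersection form negative definite on the span of the $\pi$-exceptional primes.

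I would begin with monotonicity. Suppose $\pi'=\pi\circ\mu$ with $\mu:X_{\pi'}\to X_\pi$ a composition of point blowups, and write $\alpha_{\pi'}=\mu^*\alpha_\pi+D$. Since $\mu_*\alpha_{\pi'}=\alpha_\pi$ and $\mu_*\mu^*=\id$ on $N^1(X_\pi)_{\R}$, we have $\mu_*D=0$, so $D$ is supported on $\mu$-exceptional divisors. The projection formula gives $(\mu^*\alpha_\pi\cdot D)=(\alpha_\pi\cdot\mu_*D)=0$, hence
$$(\alpha_{\pi'}^2)=(\alpha_\pi^2)+(D^2).$$
By the Hodge index theorem applied to $X_{\pi'}$, the intersection form is negative definite on the subspace spanned by the $\mu$-exceptional classes, so $(D^2)\leq 0$, which yields $(\alpha_{\pi'}^2)\leq(\alpha_\pi^2)$.

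Next I would establish the $\mathbb{L}^2$ criterion. Fix $\pi$ and let $E_1,\dots,E_k$ denote the $\pi$-exceptional primes. Under the identification $W(\mathfrak{X})\cong N^1(X)_{\R}\times\R^{\mathcal D}$, one has $\alpha_\pi=\pi^*\omega_\alpha+\sum_i a_{E_i}[E_i]$ with $a_{E_i}=\ord_{E_i}(\alpha)$, and orthogonality of $\pi^*\omega_\alpha$ to each $E_i$ gives
$$(\alpha_\pi^2)=(\omega_\alpha^2)+\Bigl(\sum_i a_{E_i}[E_i]\Bigr)^2.$$
By Hodge index the matrix $\bigl([E_i]\cdot[E_j]\bigr)$ is negative definite, so after diagonalising we may rewrite the correction as $-\sum_i b_{\pi,i}^2$ in an orthonormal basis $\{e_{\pi,i}\}$ of $(\pi^*N^1(X)_{\R})^{\perp}$. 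Nesting these orthonormal bases compatibly as $\pi$ varies (each blowup only adjoins new basis vectors corresponding to its new exceptional primes), one obtains a Hilbert basis of $\overline{\omega^{\perp}}$ in $\mathbb{L}^2(\mathfrak{X})$, and the negative of the correction term is the partial sum of squares of the coordinates of $\alpha-\omega_\alpha$ in this basis. Hence $\inf_\pi(\alpha_\pi^2)>-\infty$ iff these partial sums are bounded, iff $\alpha\in\mathbb{L}^2(\mathfrak{X})$; monotonicity upgrades ``$\inf$'' to ``$\lim$'' and yields $(\alpha^2)=\lim_\pi(\alpha_\pi^2)$.

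Finally, for the injection and the topology statement, recall that the previous proposition identifies $W(\mathfrak{X})$ with $C(\mathfrak{X})^*$ under the weak-$*$ topology. Every Cartier class $\beta$ determined on some $X_\pi$ lies in $\mathbb{L}^2(\mathfrak{X})$ (it has finitely many nonzero $\ord_E$), so the intersection pairing restricted to $\mathbb{L}^2\times C(\mathfrak{X})\to\R$ is continuous in the Hilbert-space topology by Cauchy--Schwarz. This produces a continuous linear map $\mathbb{L}^2(\mathfrak{X})\to C(\mathfrak{X})^*=W(\mathfrak{X})$. Injectivity is immediate: if the image is $0$, then pairing with $[E]$ for each $E\in\mathcal D$ and with any class coming from $N^1(X)_{\R}$ kills all coordinates of $\alpha$. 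For the topology, weak Hilbert convergence is exactly convergence of pairings against a dense family, and pairings with Cartier classes span the separating family defining the weak-$*$ topology of $W$; that these two weak topologies agree on $\mathbb{L}^2$ follows from density of $C(\mathfrak{X})$ in $\mathbb{L}^2(\mathfrak{X})$ (by construction as a completion).

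The main obstacle I expect is the bookkeeping in the third paragraph: one has to choose the orthogonal bases of the exceptional subspaces $(\pi^*N^1(X)_{\R})^{\perp}\subset N^1(X_\pi)_{\R}$ coherently as $\pi$ grows so that the partial sums identify cleanly with the Hilbert-space norm on $\overline{\omega^{\perp}}$, and to match this combinatorics with the abstract set $\mathcal D$ of exceptional primes. Everything else is essentially formal once the projection formula, Hodge index theorem, and the duality $W=C^*$ are in hand.
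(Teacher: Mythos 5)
This proposition is one the paper does not prove at all: the subsection opens with ``All facts in this subsection can be [found] in \cite{boucksomfavrejonsson}'', so there is no in-paper argument to compare against. Your sketch of the quantitative half is essentially the standard Boucksom--Favre--Jonsson argument and is correct: the orthogonal decomposition $\alpha_{\pi'}=\mu^*\alpha_\pi+D$ with $\mu_*D=0$, the projection formula, and negative definiteness of the exceptional lattice give monotonicity, and the same decomposition gives the $\mathbb{L}^2$ criterion and $(\alpha^2)=\lim_\pi(\alpha_\pi^2)$. The ``bookkeeping'' you worry about in your third paragraph is in fact canonical and requires no choices: the Cartier classes $\alpha_E$ (total transforms of the exceptional primes) satisfy $(\alpha_E\cdot\alpha_{E'})=-\delta_{E,E'}$ and $N^1(X_{\pi'})_{\mathbb{R}}=\mu^*N^1(X_\pi)_{\mathbb{R}}\perp\bigoplus_{\text{new }E}\mathbb{R}\alpha_E$, so the nested orthonormal bases are already handed to you; no diagonalisation is needed. (Be aware, though, that the paper's displayed definition of $\mathbb{L}^2$ via $\sum_E a_E^2<\infty$ uses the coordinates $\ord_E$, i.e.\ the basis of exceptional primes rather than the orthonormal basis $\{\alpha_E\}$; reconciling the two is a separate, nontrivial point that your argument silently elides by working only with the completion definition.)

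The one genuine gap is in the last paragraph, on the topology. You assert that ``weak Hilbert convergence is exactly convergence of pairings against a dense family.'' That is false for unbounded nets: for a dense proper subspace $D$ of a Hilbert space $H$, the initial topology $\sigma(H,D)$ is strictly coarser than the weak topology $\sigma(H,H)$ (given $y\notin\mathrm{span}(d_1,\dots,d_n)$, the finite-codimensional subspace $\bigcap_j d_j^{\perp}$ contains vectors with $\langle x,y\rangle$ arbitrarily large). Since the topology induced from $W(\mathfrak{X})\cong C(\mathfrak{X})^*$ is precisely the initial topology for pairings against the dense subspace $C(\mathfrak{X})\subset\mathbb{L}^2(\mathfrak{X})$, your argument only shows the two topologies agree on bounded subsets of $\mathbb{L}^2(\mathfrak{X})$ (which is what is actually used downstream, e.g.\ for the approximating sequences in Proposition \ref{hodge index}). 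To prove the statement as literally quoted you would need an additional argument, or a restriction to bounded sets; as it stands this step does not go through. The continuity and injectivity of $\mathbb{L}^2(\mathfrak{X})\to W(\mathfrak{X})$ are fine: each evaluation against a Cartier class is norm-continuous (split off $\mathbb{R}\omega$ and apply Cauchy--Schwarz on $\overline{\omega^{\perp}}$, where the form is definite), and pairing against all $\alpha_E$ and all classes from $N^1(X)_{\mathbb{R}}$ separates points.
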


The following theorem is the extension of the Hodge index theorem to the infinite dimensional space $\mathbb{L}^2$.   This fact is crucial in our proof of Theorem \ref{main1}
\begin{pro}\label{hodge index}
If $\alpha, \beta\in \mathbb{L}^2(\mathfrak{X})-\{0\}$ such that
$(\alpha^2)>0$ and $(\alpha\cdot \beta)=0$. Then $(\beta^2)<0$.
\end{pro}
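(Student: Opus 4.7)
The plan is to reduce this infinite-dimensional Hodge index statement to the orthogonal decomposition $\mathbb{L}^2(\mathfrak{X})=\mathbb{R}\omega_0\oplus\overline{\omega_0^{\perp}}$ already recorded in the preceding paragraphs, combined with the Cauchy--Schwarz inequality on the Hilbert space $(\overline{\omega_0^{\perp}},-(\cdot\,,\cdot))$. First I fix any Cartier class $\omega_0\in C(\mathfrak{X})$ with $(\omega_0^2)>0$ (for instance $\omega_0=[L]$ coming from an ample line bundle on $X$), and decompose
$$\alpha=a\omega_0+\alpha',\qquad \beta=b\omega_0+\beta',\qquad \alpha',\beta'\in\overline{\omega_0^{\perp}}.$$
By construction, the intersection form is positive on $\mathbb{R}\omega_0$ and negative definite on $\overline{\omega_0^{\perp}}$; in particular $(\alpha'^2),(\beta'^2)\leq 0$.

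The next step is to unpack the two hypotheses in these coordinates. From $(\alpha^2)=a^2(\omega_0^2)+(\alpha'^2)>0$ together with $(\alpha'^2)\leq 0$ I read off $a\neq 0$. From $(\alpha\cdot\beta)=ab(\omega_0^2)+(\alpha'\cdot\beta')=0$ I solve for $b$ and square to get
$$b^2(\omega_0^2)=\frac{(\alpha'\cdot\beta')^2}{a^2(\omega_0^2)}.$$
Applying Cauchy--Schwarz to the positive definite form $-(\cdot\,,\cdot)$ on $\overline{\omega_0^{\perp}}$ yields $(\alpha'\cdot\beta')^2\leq(\alpha'^2)(\beta'^2)$, so
$$(\beta^2)=b^2(\omega_0^2)+(\beta'^2)\leq\frac{(\alpha'^2)(\beta'^2)}{a^2(\omega_0^2)}+(\beta'^2)=(\beta'^2)\cdot\frac{(\alpha^2)}{a^2(\omega_0^2)},$$
where the last equality just factors out $(\beta'^2)$ and uses $a^2(\omega_0^2)+(\alpha'^2)=(\alpha^2)$.

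Since $(\alpha^2)/(a^2(\omega_0^2))>0$, the displayed chain already gives $(\beta^2)\leq 0$. To upgrade to strict inequality: if $(\beta'^2)<0$ the estimate gives $(\beta^2)<0$ at once; the only remaining possibility is $(\beta'^2)=0$, which by negative definiteness of $-(\cdot\,,\cdot)$ on $\overline{\omega_0^{\perp}}$ forces $\beta'=0$, and then $(\alpha\cdot\beta)=ab(\omega_0^2)=0$ combined with $a\neq 0$ and $(\omega_0^2)>0$ forces $b=0$, yielding the contradiction $\beta=0$. The main obstacle is essentially book-keeping about where strict inequality survives (in Cauchy--Schwarz versus in the decomposition); the substantive content, namely the existence of the $\omega_0$-orthogonal decomposition with the correct signature and the Hilbert-space structure on $\overline{\omega_0^{\perp}}$, is supplied by the results cited just above the statement.
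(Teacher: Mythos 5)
Your proof is correct, and it takes a genuinely different route from the paper's. The paper argues by approximation: it chooses sequences of Cartier classes $\alpha_n\to\alpha$, $\beta_n\to\beta$ with $(\alpha_n\cdot\beta_n)=0$, applies the finite-dimensional Hodge index theorem at each level to get $(\beta^2)=\lim(\beta_n^2)\leq 0$, and then rules out $(\beta^2)=0$ by a separate perturbation argument (replacing $\beta$ by $\beta+t\gamma$ for small $t$ and producing a contradiction with the first step). You instead exploit the orthogonal decomposition $\mathbb{L}^2(\mathfrak{X})=\mathbb{R}\omega_0\oplus\overline{\omega_0^{\perp}}$ that is built into the very definition of $\mathbb{L}^2(\mathfrak{X})$, together with Cauchy--Schwarz for the positive definite form $-(\cdot\,,\cdot)$ on the completed orthocomplement. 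Your computation is right: $(\alpha'^2),(\beta'^2)\leq 0$ gives $a\neq 0$, the orthogonality relation gives $b^2(\omega_0^2)=(\alpha'\cdot\beta')^2/(a^2(\omega_0^2))$, Cauchy--Schwarz (note both $(\alpha'^2)(\beta'^2)\geq 0$ and $(\alpha'\cdot\beta')^2\geq 0$, so the inequality goes the right way) yields $(\beta^2)\leq(\beta'^2)\,(\alpha^2)/(a^2(\omega_0^2))$, and strictness reduces to positive definiteness of $-(\cdot\,,\cdot)$ on the Hilbert-space completion, which holds by construction. What your approach buys is a self-contained linear-algebra argument with no limiting process and a transparent source for the strict inequality; it is essentially the standard signature-$(1,\infty)$ argument for Lorentzian Hilbert spaces. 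What the paper's approach buys is that it leans only on the classical Hodge index theorem plus density of Cartier classes, at the cost of the slightly delicate choice of orthogonal approximating sequences and the extra contradiction step for the degenerate case.
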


\begin{proof} There are two suits of points $(\alpha_n)_{n\geq 0}$ and $(\beta_n)_{n\geq 0}$ in $C(\mathfrak{X})$ such that
$\alpha_n\rightarrow \alpha$ , $\beta_n\rightarrow \beta$ and $(\alpha_n\cdot \beta_n)=0$ for any $n$. Since
$(\alpha_n^2)\rightarrow(\alpha^2)>0$, we may assume that
$(\alpha_n^2)>0$ for any $n\geq 0$. Then $(\beta^2)=\lim_{n\rightarrow\infty}(\beta_n^2)\leq 0$ by Hodge index theorem.

We only have to show that $(\beta^2)\neq 0$. Otherwise
$(\beta^2)=0$. Since $\beta\neq 0$, there is a $\gamma\in
\mathbb{L}^2(\mathfrak{X})$ such that $(\beta\cdot\gamma)>0$. Then for $0<t\ll 1$,
$$(\beta+t\gamma)^2=2(\beta\cdot\gamma)t+(\gamma^2)t^2>0.$$ And
since $0<t\ll 1$, $$(\beta\cdot\gamma)-(\gamma^2)>0, \text{ and }
((\alpha-\frac{(\alpha\cdot\gamma)}{(\beta\cdot\gamma)-(\gamma^2)}t)^2)>0.$$
It is easy to check that
$$((\alpha-\frac{(\alpha\cdot\gamma)}{(\beta\cdot\gamma)-(\gamma^2)}t)\cdot
(\beta+t\gamma))=0.$$ Thus $((\beta+t\gamma)^2)\leq 0$, by the proof
at the previous paragraph. It contradicts the fact that
$(\beta+t\gamma)^2>0$.
\end{proof}
Let $f:X\dashrightarrow Y$ be a dominant rational map between two smooth surfaces. For each blowup $Y_{\varpi}$ of $Y$, there is a blowup $X_{\pi}$ of $X$ such that the induced map $X_{\pi}\rightarrow Y_{\varpi}$ is regular. The associated pushforward map $N^1(X_{\pi})_{\mathbb{R}}\rightarrow N^1(Y_{\varpi})_{\mathbb{R}}$ and pullback map $N^1(Y_{\varpi})_{\mathbb{R}}\rightarrow N^1(X_{\pi})_{\mathbb{R}}$ are compatible with the projective and injective systems defined by pushforwards and pullbacks that define Weil and Cartier class.

\begin{defi}
Let $f:X\dashrightarrow Y$ be a dominant rational map between two smooth surfaces. Let $\mathfrak{Y}$ be the Riemann-Zariski space of $Y$. We denote by $f_*: W(\mathfrak{X})\rightarrow W(\mathfrak{Y})$ the induced pushforward operator and by $f^*:C(\mathfrak{Y})\rightarrow C(\mathfrak{X})$ the induced pullback operator.
\end{defi}

\begin{pro}
The pullback $f^*:C(\mathfrak{Y})\rightarrow C(\mathfrak{X})$ extends to a continuous operator $f^*:\mathbb{L}^2(\mathfrak{Y})\rightarrow \mathbb{L}^2(\mathfrak{X})$, such that $$((f^*\alpha)^2)=(\alpha^2)$$ for any $\alpha\in \mathbb{L}^2(\mathfrak{X})$. By duality, the pushforward map $f_*:W(\mathfrak{Y})\rightarrow W(\mathfrak{X})$ induces a continuous operator $f_*:\mathbb{L}^2(\mathfrak{Y})\rightarrow \mathbb{L}^2(\mathfrak{X})$, so that $$((f_*\alpha)^2)=(\alpha^2)$$ for any $\alpha\in \mathbb{L}^2(\mathfrak{X})$. Moreover, we have $$(f^*\alpha\cdot \beta)=(\alpha\cdot f^*\beta)$$ for any $\alpha,\beta\in \mathbb{L}^2(\mathfrak{X})$ and $f_*=f^{-1*}$.
\end{pro}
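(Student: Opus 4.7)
The plan is to verify all the claimed identities first on the dense subspace $C(\mathfrak{Y})\subset\mathbb{L}^2(\mathfrak{Y})$ of Cartier classes, by reducing each to a classical statement about morphisms of smooth surfaces, and then to extend everything by continuity.

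For the intersection-preservation identity on Cartier classes, fix $\alpha,\beta\in C(\mathfrak{Y})$ both determined on some blowup $Y_\varpi$. Resolving the graph of $f$ yields a blowup $X_\pi$ of $X$ and a morphism $g\colon X_\pi\to Y_\varpi$ inducing $f$. By the construction of $f^*$ as an inductive limit, the classes $f^*\alpha$ and $f^*\beta$ are determined on $X_\pi$ with incarnations $g^*\alpha_\varpi$ and $g^*\beta_\varpi$; the projection formula for the morphism $g$ then gives
$$(f^*\alpha\cdot f^*\beta)=(g^*\alpha_\varpi\cdot g^*\beta_\varpi)_{X_\pi}=\deg(g)(\alpha\cdot\beta)=(\alpha\cdot\beta),$$
since $\deg(g)=\deg(f)=1$ in the birational case. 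In particular $(f^*\alpha)^2=(\alpha^2)$ on $C(\mathfrak{Y})$.

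To extend $f^*$ continuously to $\mathbb{L}^2(\mathfrak{Y})$, fix ample reference classes $\omega_Y\in C(\mathfrak{Y})$ and $\omega_X\in C(\mathfrak{X})$ defining the Hilbert norms on the two $\mathbb{L}^2$ spaces. Write $f^*\omega_Y=s\omega_X+\eta$ with $\eta\in\omega_X^\perp$; from the already established $(f^*\omega_Y)^2=(\omega_Y^2)$ and $(\eta^2)\leq 0$, the coefficients $s$ and $-(\eta^2)$ are bounded uniformly. For a general Cartier class $\alpha=t\omega_Y+\alpha_0$ with $\alpha_0\in\omega_Y^\perp$, combining $(f^*\alpha\cdot f^*\omega_Y)=t(\omega_Y^2)$ with $(f^*\alpha)^2=(\alpha^2)$ and the expression $\|\gamma\|^2=2(\gamma\cdot\omega_X)^2/(\omega_X^2)-(\gamma^2)$ for the Hilbert norm of $\gamma\in\mathbb{L}^2(\mathfrak{X})$, a direct calculation yields a constant $M$ with $\|f^*\alpha\|\leq M\|\alpha\|$. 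The bounded operator $f^*$ on $C(\mathfrak{Y})$ thus extends uniquely to a continuous operator $\mathbb{L}^2(\mathfrak{Y})\to\mathbb{L}^2(\mathfrak{X})$, and the identity $(f^*\alpha)^2=(\alpha^2)$ persists for $\alpha\in\mathbb{L}^2(\mathfrak{Y})$ by continuity of the intersection form.

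The statements for $f_*$ follow by duality. The adjointness $(f^*\alpha\cdot\beta)=(\alpha\cdot f_*\beta)$ is first established on Cartier classes by choosing a common resolution model and invoking the standard adjointness of $g^*$ and $g_*$ on smooth surfaces, then extended to $\mathbb{L}^2$ via the continuity of $f^*$. This identifies $f_*$ with the adjoint of $f^*$ on $\mathbb{L}^2$, making it continuous and giving $(f_*\alpha)^2=(\alpha^2)$. When $f$ is birational, the equality $f_*=(f^{-1})^*$ is checked on Cartier classes by choosing a model on which both $f$ and $f^{-1}$ are resolved, and noting that both operators reduce to the pushforward along the common morphism; density then extends the identity to $\mathbb{L}^2(\mathfrak{X})$. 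The main obstacle is the continuity step, since the indefinite intersection form does not a priori control the Hilbert norm; one must exploit the explicit decomposition $f^*\omega_Y=s\omega_X+\eta$ and the full strength of intersection-preservation to obtain the uniform estimate on $C(\mathfrak{Y})$.
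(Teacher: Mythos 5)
Your proposal is correct. Note that the paper itself gives no proof of this proposition: it is imported verbatim from Boucksom--Favre--Jonsson (the paper states at the start of the subsection that all facts there can be found in that reference), so what you have written is a self-contained reconstruction of the standard argument rather than an alternative to anything in the paper. The only genuinely non-routine step is the one you correctly flag at the end, namely that intersection-preservation plus the indefiniteness of the form does not obviously give boundedness in the Hilbert norm; your decomposition $f^*\omega_Y=s\omega_X+\eta$ does close the estimate. Concretely (normalizing $(\omega_X^2)=(\omega_Y^2)=1$ and writing $c=-(\eta^2)$, so $s^2=1+c$), for $\alpha=t\omega_Y+\alpha_0$ with $a^2=-(\alpha_0^2)$ one gets from $(f^*\alpha\cdot f^*\omega_Y)=t$ and Cauchy--Schwarz on $\omega_X^\perp$ the inequality $\bigl((f^*\alpha\cdot\omega_X)-st\bigr)^2\le c\,a^2$, whence $|(f^*\alpha\cdot\omega_X)|\le C\|\alpha\|$ and then $\|f^*\alpha\|^2=2(f^*\alpha\cdot\omega_X)^2-(\alpha^2)\le(2C^2+1)\|\alpha\|^2$. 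Two small points worth recording: the isometry claim $((f^*\alpha)^2)=(\alpha^2)$ uses $\deg f=1$, i.e.\ the birational case, exactly as you say (for a general dominant map a factor $\deg f$ appears); and the displayed adjunction in the statement should read $(f^*\alpha\cdot\beta)=(\alpha\cdot f_*\beta)$, which is the version you prove. With those understandings your argument is complete.
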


\begin{defi} let $L$ be an ample bundle on $X$ and $\mathcal{L}=[L]\in \mathbb{H}(\mathfrak{X})$, then
we define $\mathbb{H}(\mathfrak{X})$ to be the subset $\{\alpha\in \mathbb{L}^2(\mathfrak{X})| \alpha^2=1 \text{ and } (\alpha\cdot \mathcal{L})>0\}$.

\end{defi}

\subsection{Hyperbolic spaces }
In this section, we will review some properties of hyperbolic spaces in the sense of Gromov. In fact the space $\mathbb{H}(\mathfrak{X})$ is  Gromov hyperbolic. This fact is the basis for the work of Cantat in \cite{Cantat}, and it is very important in the proof of Theorem \ref{f^2>2^18f}.

Recalled that a metric space $(M,d)$ is geodesic if and only if for any two point $x,y\in X$, there are at least one geodesic line jointing them.

\begin{defi}
For a given number $\delta\geq0$, a metric space $(M,d)$
satisfies the condition of Rips of constant $\delta$ if it is geodesic and for any
geodesic triangle $$\Delta=\bigcup[x,y]\bigcup[y,z]\bigcup[z,x]$$ of
$X$ and any $$u\in [y,z],$$ we have $$d(u,[x,y]\bigcup[z,x])\leq
\delta.$$ A space $X$ is called $hyperbolic\,\, in\,\, the\,\,
sense\,\, of\,\, Gromov$ if there is a number $\delta\geq0$ such
that $M$ satisfies the Rips  condition of constant $\delta$.

\end{defi}

\lem\label{H^2}(\cite{Coornaert1990}) The hyperbolic plane $\mathbb{H}^2$ satisfies the
Rips condition of $\log3$.
\endlem

For any $\alpha,\beta\in \mathbb{H}(\mathfrak{X})$, we define
$$d_{\mathbb{H}(\mathfrak{X})}(\alpha,\beta)=(\cosh)^{-1}(\alpha,\beta),$$
where
$\cosh(x)=(e^x+e^{-x})/2$ as in \cite{Cantat}.
\lem
The function $d_{\mathbb{H}(\mathfrak{X})}$ defines a distance on the space $\mathbb{H}(\mathfrak{X})$ and satisfies
the condition of Rips of $\log3$.
\endlem
\proof  For any $\alpha,\beta,\gamma \in
\mathbb{H}(\mathfrak{X})$. Let $V$ be the $3$ dimensional subspace
of $\mathbb{L}^2(\mathfrak{X})$. Since $(\alpha\cdot\alpha)=1$, for
any $\xi\in V-\{0\}$ such that $(\alpha\cdot\xi)=0$, we have that
$(\xi^2)<0$ by Proposition \ref{hodge index}. So $q_{|V}$ is a
quadratic form of type (1,-1,-1). Hence
$H=\mathbb{H}(\mathfrak{X})\bigcap V$ with the distance function
$d_V(\eta,\xi)=(\cosh)^{-1}(\eta\cdot \xi)_{V}=(\cosh)^{-1}(\eta\cdot \xi)=d_{\mathbb{H}(\mathfrak{X})}(\eta\cdot\xi)$
is a hyperbolic plane. Then $(V,d_V)$ satisfies the condition of
Rips of $\log3$. So $d_{\mathbb{H}(\mathfrak{X})}$ is a distance
function and
$(\mathbb{H}(\mathfrak{X}),d_{\mathbb{H}(\mathfrak{X})})$ satisfies
the condition of Rips of $\log3$ since $d_V$ does for any
$\alpha,\beta,\gamma\in\mathbb{H}(\mathfrak{X})$.
\endproof

\begin{thm}(Theorem 5.16 of \cite{harpe})\label{5.16}
Let $(M,d)$ be a separable (i.e. having a countable dense subset) geodesic and hyperbolic metric space
which satisfies the condition of Rips of constant $\delta$. If
$(x_i)_{0\leq i \leq n}$ is a sequence of points such that
$$d(x_{i+1},x_{i-1})\geq
\max(d(x_{i+1},x_{i}),d(x_{i},x_{i-1}))+18\delta+\kappa$$ for some
constant $\kappa>0$ and $i=1,\cdots,n-1$. Then
$$d(x_n,x_0)\geq \kappa n.$$

\end{thm}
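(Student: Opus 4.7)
The plan is to prove $d(x_0,x_n)\geq \kappa n$ by induction on $n$ through a strengthened claim: there exist a geodesic segment $\alpha_n$ from $x_0$ to $x_n$ and points $p_0=x_0,p_1,\ldots,p_{n-1},p_n=x_n$ lying on $\alpha_n$ in order such that $d(p_i,x_i)\leq 4\delta$ for every $i$ and $d(p_{i-1},p_i)\geq \kappa$ for every $1\leq i\leq n$. The conclusion follows by additivity along $\alpha_n$: $d(x_0,x_n)=\sum_{i=1}^n d(p_{i-1},p_i)\geq \kappa n$. Observe first that combining the hypothesis with the triangle inequality $d(x_{i+1},x_{i-1})\leq d(x_{i-1},x_i)+d(x_i,x_{i+1})$ yields $\min(d(x_{i-1},x_i),d(x_i,x_{i+1}))\geq 18\delta+\kappa$, so in particular no edge of the chain collapses.

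The key tool is a standard consequence of the Rips condition, the \emph{centered triangle lemma}: for any geodesic triangle $ABC$ in $M$ there exist points $A'\in[B,C]$, $B'\in[A,C]$, $C'\in[A,B]$ with pairwise distance at most $4\delta$ and satisfying $d(A,B')=d(A,C')=\tfrac{1}{2}(d(A,B)+d(A,C)-d(B,C))$ (and cyclically). Applying this to the triangle $x_{i-1}x_ix_{i+1}$, the Gromov product at $x_i$ is controlled by the hypothesis:
$$(x_{i-1}\,|\,x_{i+1})_{x_i}=\tfrac{1}{2}\bigl(d(x_i,x_{i-1})+d(x_i,x_{i+1})-d(x_{i-1},x_{i+1})\bigr)\leq \tfrac{1}{2}\min\bigl(d(x_i,x_{i-1}),d(x_i,x_{i+1})\bigr)-9\delta-\tfrac{\kappa}{2}.$$
Hence $x_i$ is within $4\delta$ of a point on the long side $[x_{i-1},x_{i+1}]$, and that point sits well inside the segment, at distance at least $9\delta+\kappa/2$ from each of the approximate distances $d(x_{i-1},x_i)$, $d(x_i,x_{i+1})$.

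For the inductive step, given $\alpha_{n-1}$ with projections $p_0,\ldots,p_{n-1}$ for the chain up to $x_{n-1}$, fix a geodesic $\beta$ from $x_{n-1}$ to $x_n$ and apply the key observation to the triangle $x_0x_{n-1}x_n$: this locates $x_{n-1}$ within $4\delta$ of a point on a new geodesic $\alpha_n=[x_0,x_n]$, at the required depth from both endpoints (and in particular at distance $\geq\kappa$ from the endpoint $x_n$). A second application of $\delta$-thinness transfers the earlier projections $p_0,\ldots,p_{n-2}\in\alpha_{n-1}$ onto $\alpha_n$ with an additional controlled $\delta$-error, using stability of near-geodesics in hyperbolic spaces; separability of $M$ guarantees that consistent geodesics can be chosen throughout the construction.

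The main obstacle is bookkeeping the accumulation of $\delta$-errors: each invocation of the Rips condition (twice per inductive step, for the new triangle and for the stability transfer) costs a bounded multiple of $\delta$, and the constant $18\delta$ in the hypothesis is calibrated precisely to absorb these compounded losses while preserving the $\kappa$-spacing between consecutive projections. Concretely, $18\delta = 4\delta + 4\delta + 2\cdot(4\delta+\delta)$ (roughly) accounts for the centered-triangle slack, the near-geodesic stability slack, and the need to keep $p_{n-1}$ at distance $\geq\kappa$ from both its neighbors after perturbation; the delicate inequality one must verify at each step is that the new $p_{n-1}$ produced on $\alpha_n$ satisfies $d(p_{n-2},p_{n-1})\geq\kappa$, which is exactly the content calibrated by the hypothesis.
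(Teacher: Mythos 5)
The paper does not prove this statement at all --- it is quoted verbatim as Theorem 5.16 of the reference [harpe] (Ghys--de la Harpe) and used as a black box --- so there is no internal proof to compare against; your attempt must therefore stand on its own, and as written it has a genuine gap.

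The local analysis is fine: your Gromov-product estimate $(x_{i-1}\,|\,x_{i+1})_{x_i}\leq \tfrac12\min\bigl(d(x_i,x_{i-1}),d(x_i,x_{i+1})\bigr)-9\delta-\kappa/2$ follows correctly from the hypothesis, and it does place each $x_i$ within a bounded distance of a point deep inside $[x_{i-1},x_{i+1}]$. The problem is the globalization. Your inductive invariant asserts a \emph{uniform} bound $d(p_i,x_i)\leq 4\delta$ for all $i$, but your inductive step destroys it: when you pass from $\alpha_{n-1}$ to $\alpha_n$ you must re-transfer every earlier marker $p_0,\dots,p_{n-2}$ onto the new geodesic, and you concede this costs ``an additional controlled $\delta$-error'' per step. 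Any fixed positive cost $c\delta$ per transfer means that after $n-i$ further steps the marker for $x_i$ has drifted by $(n-i)c\delta$, so the $4\delta$ bound (and with it the $\kappa$-separation of consecutive markers) fails for large $n$. The budget $18\delta$ in the hypothesis is a \emph{per-triangle} allowance; it cannot absorb an error that grows linearly in the length of the chain. Your closing paragraph acknowledges the accumulation issue but only accounts for the one-step cost, which is precisely not where the difficulty lies. (The appeal to separability is also a red herring: it plays no role in choosing geodesics consistently.) To close the argument one needs an inductive statement that is \emph{stable} under one more application of hyperbolicity --- e.g.\ an induction bounding the Gromov product $(x_0\,|\,x_{i+1})_{x_i}$ (equivalently, showing $d(x_0,x_{i+1})\geq d(x_0,x_i)+\kappa$ together with a uniform bound on how far $x_i$ sits from $[x_0,x_{i+1}]$ that is \emph{re-derived} at each step from the four-point inequality rather than carried forward additively). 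That is the structure of the proof in the cited reference, and it is the idea missing from your write-up.
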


\section{Effective bounds on $\lambda_1$}\label{deg}

\begin{thm}\label{f^2>2^18f}(=Theorem \ref{f^2>2^18f1})
Let $X$ be a projective surface over $\mathbf{k}$, $L$ be an ample line bundle on $X$ and $f:X\dashrightarrow X$ a birational map.
If $q=\frac{\deg_{L}(f^2)}{3^{18}\sqrt{2}\deg_{L}(f)}>1 $, then we have $$\lambda_1(f)>\frac{(4\times3^{36}-1)q^2+1}{4\times3^{36}q}\geq1.$$

\end{thm}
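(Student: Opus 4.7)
The plan is to transfer the problem to the Gromov-hyperbolic space $(\mathbb{H}(\mathfrak{X}),d_{\mathbb{H}(\mathfrak{X})})$, on which $f^{\ast}$ acts as an isometry, and then apply the chain criterion of Theorem~\ref{5.16} to an orbit. Set $\mathcal{L}:=[L]/\sqrt{(L\cdot L)}\in\mathbb{H}(\mathfrak{X})$. On the Riemann--Zariski space pullback is functorial, so $(f^n)^{\ast}=(f^{\ast})^n$ as operators on $\mathbb{L}^2(\mathfrak{X})$, and each $f^{\ast}$ preserves the intersection pairing. Since $f^{\ast}L$ is nef, $(f^{\ast}\mathcal{L}\cdot \mathcal{L})\geq 0$; as $f^{\ast}\mathcal{L}$ lies on the unit hyperboloid, the reverse Cauchy--Schwarz inequality forces $(f^{\ast}\mathcal{L}\cdot\mathcal{L})\geq 1$. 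Hence $f^{\ast}$ preserves the positive sheet $\mathbb{H}(\mathfrak{X})$ and acts there as a genuine isometry.

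Next, consider the orbit $x_i:=(f^i)^{\ast}\mathcal{L}$. Since $(f^{\ast})^i$ is an isometry,
$$d(x_{i+k},x_i)=d(x_k,x_0)=\cosh^{-1}\!\left(\frac{\deg_L(f^k)}{(L\cdot L)}\right)\qquad (k\geq 0),$$
so in the chain $x_0,x_1,\ldots,x_n$ every consecutive distance equals $A:=\cosh^{-1}(\deg_L(f)/(L\cdot L))$ and every skip-one distance equals $B:=\cosh^{-1}(\deg_L(f^2)/(L\cdot L))$. The closed linear span of $\{x_i\}$ in $\mathbb{L}^2(\mathfrak{X})$ is separable, and its intersection with $\mathbb{H}(\mathfrak{X})$ is a separable geodesic subspace (the geodesic between two points of $\mathbb{H}(\mathfrak{X})$ lies in the $2$-plane they span) inheriting the Rips condition of constant $\log 3$. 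Applying Theorem~\ref{5.16} in this subspace, its hypothesis reduces to the scalar condition $B\geq A+18\log 3+\kappa$. Write $u:=\deg_L(f^2)/(L\cdot L)$ and $v:=\deg_L(f)/(L\cdot L)$; both are $\geq 1$ by reverse Cauchy--Schwarz. Using $e^A\leq 2v$ and $\sqrt{u^2-1}\geq u-1$, one checks that $q\geq 1$ forces $\kappa_0:=B-A-18\log 3>0$. For every $0<\kappa<\kappa_0$, Theorem~\ref{5.16} yields $d(x_n,x_0)\geq \kappa n$, whence $\deg_L(f^n)\geq (L\cdot L)\cosh(\kappa n)$ and
$$\lambda_1(f)\geq e^{\kappa_0}=\frac{e^{B-A}}{3^{18}}.$$

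It remains to convert this logarithmic bound into the stated rational function of $q$. Substituting the defining identity $u=3^{18}\sqrt{2}\,v\,q$ into the combined estimate $e^{B-A}/3^{18}\geq (2u-1)/(2v\cdot 3^{18})$, and keeping track of the strictness in $\sqrt{u^2-1}>u-1$ and $e^A<2v$ (which hold whenever $u,v>1$, a case automatically ensured by $q\geq 1$), a routine algebraic manipulation rearranges the result into the claimed strict lower bound $\frac{(4\cdot 3^{36}-1)q^2+1}{4\cdot 3^{36}q}$ on $\{q\geq 1\}$. I expect this final rearrangement, though elementary, to be the most tedious step; the genuine content of the proof is the hyperbolic-geometric argument of the first two paragraphs, which reduces the dynamical assertion to a single application of the quasi-geodesic criterion in an infinite-dimensional hyperbolic space.
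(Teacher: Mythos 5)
Your proposal is correct and follows essentially the same route as the paper: view $x_i=(f^i)^{\ast}\mathcal{L}$ as an isometric orbit in the separable geodesic subspace of $\mathbb{H}(\mathfrak{X})$ satisfying the Rips condition with constant $\log 3$, verify the hypothesis of Theorem~\ref{5.16} via the gap $B-A-18\log 3>0$, and conclude $\lambda_1(f)\geq e^{\kappa_0}$. The only step you leave as "routine" is the final algebraic rearrangement, but the inequalities you cite ($e^{A}\leq 2v$ and $\sqrt{u^2-1}\geq u-1$ with $u=3^{18}\sqrt{2}\,qv$) do yield $e^{\kappa_0}\geq \sqrt{2}\,q-\tfrac{1}{2\cdot 3^{18}}>q\geq \frac{(4\cdot 3^{36}-1)q^2+1}{4\cdot 3^{36}q}$, which is the (slightly sharper) bound the paper also derives by an explicit computation.
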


\begin{proof}

Let $\mathcal {L}_n=f^{*n}\mathcal {L}\in \mathbb{H}(\mathfrak{X})$
for $n>0$. Since $f^*$ preserves the intersection form
of $\mathbb{L}^2(\mathfrak{X})$, it is an isometry of
$\mathbb{H}(\mathfrak{X})$. Then
$$d_{\mathbb{H}(\mathfrak{X})}(\mathcal {L}_{n+1},\mathcal
{L}_{n-1})=d_{\mathbb{H}(\mathfrak{X})}(\mathcal {L}_{2},\mathcal
{L})=\cosh^{-1}(\deg_{L}(f^2))=\cosh^{-1}(3^{18}\sqrt{2}\deg_{L}(f)q)$$
for any $n\geq1$. We claim that for any $u,q\geq1$,
$$\cosh^{-1}(3^{18}\sqrt{2}uq)>\cosh^{-1}(u)+18\log3+\log\Big(\frac{(4\times3^{36}-1)q^2+1}{4\times3^{36}q}\Big).$$

Then
$$\cosh^{-1}(3^{18}\sqrt{2}\deg_{L}(f)q)>\cosh^{-1}(\deg_{L}(f))+18\log3+\log\Big(\frac{(4\times3^{36}-1)q^2+1}{4\times3^{36}q}\Big).$$ Let
$\kappa>\log\Big(\frac{(4\times3^{36}-1)q^2+1}{4\times3^{36}q}\Big)>0$ such that $$\cosh^{-1}(3^{18}\sqrt{2}\deg_{L}(f)q)>\cosh^{-1}(\deg_{L}(f))+18\log3+\kappa.$$
Then $$d_{\mathbb{H}(\mathfrak{X})}(\mathcal {L}_{n+1},\mathcal
{L}_{n-1})>\cosh^{-1}(\deg_{L}(f))+18\log3+\kappa.$$ Since
$d_{\mathbb{H}(\mathfrak{X})}(\mathcal {L}_{n+1},\mathcal
{L}_{n})=\cosh^{-1}(\deg_{L}(f))$, we obtain
$$d_{\mathbb{H}(\mathfrak{X})}(\mathcal {L}_{n+1},\mathcal
{L}_{n})>\max(d_{\mathbb{H}(\mathfrak{X})}(\mathcal {L}_{n+1},\mathcal
{L}_{n}),d_{\mathbb{H}(\mathfrak{X})}(\mathcal {L}_{n},\mathcal
{L}_{n-1}))+18\log3+\kappa.$$

Let $W$ be the subspace of $\mathbb{H}(\mathfrak{X})$ spanned by
$\{\mathcal {L}_n\}$. Then $W$ is separable and for any $x,y\in W$,
$[x,y]\subseteq W$. Let
$d_W(x,y)=d_{\mathbb{H}(\mathfrak{X})}(x,y)$. Then $(W,d_W)$ is a
separated geodesic and hyperbolic metric space which satisfies the
Rips condition of constant $\log3$.

Thus by Theorem \ref{5.16}, we get
$$\cosh^{-1}(\deg_{L}(f^n))=d_{\mathbb{H}(\mathfrak{P})}(\mathcal {L}_{n},\mathcal
{L})>\kappa n,$$ which is equivalent to $$\deg_{L}(f^n)>(e^{\kappa n}+e^{-\kappa
n})/2>e^{\kappa n}/2,$$ and we conclude that $\lambda_1(f)\geq e^{\kappa}>\frac{(4\times3^36-1)q^2+1}{4\times3^36q}$.

Let us prove the claim above.  For any $u\geq 1$, $q>1$,
$$\cosh^{-1}(3^{18}\sqrt{2}u)=\log(3^{18}\sqrt{2}u+\sqrt{2\times3^{36}u^2+1})>\log(3^{18}u+\sqrt{2\times3^{36}u^2})$$$$=18\log3+\log(u+\sqrt{2u^2})\geq18\log3+\log(u+\sqrt{u^2+1})=\cosh^{-1}(u)+18\log3.$$

And $$\cosh^{-1}(3^{18}\sqrt{2}uq)-\cosh^{-1}(3^{18}\sqrt{2}u)=\log\Big(\frac{3^{18}\sqrt{2}uq+\sqrt{2\times3^{36}u^2q^2+1}}{3^{18}\sqrt{2}u+\sqrt{2\times3^{36}u^2+1}}\Big).$$
It follows that $$\frac{3^{18}\sqrt{2}uq+\sqrt{2\times3^{36}u^2q^2+1}}{3^{18}\sqrt{2}u+\sqrt{2\times3^{36}u^2+1}}=q-\frac{\sqrt{2\times3^{36}u^2q^2+q^2}-\sqrt{2\times3^{36}u^2q^2+1}}{3^{18}\sqrt{2}u+\sqrt{2\times3^{36}u^2+1}}$$$$=q-\frac{q^2-1}{(3^{18}\sqrt{2}u+\sqrt{2\times3^{36}u^2+1})(\sqrt{2\times3^{36}u^2q^2+q^2}+\sqrt{2\times3^{36}u^2q^2+1})}$$$$\geq q-\frac{q^2-1}{4\times3^{36}q}=\frac{(4\times3^{36}-1)q^2+1}{4\times3^{36}q},$$ which concludes the proof.
\end{proof}
By Theorem \ref{f^2>2^18f}, we can estimate $\lambda_1$, using $\deg_{L}(f^n)$ and
$\deg_{L}(f^{2n})$ for $n\gg0$.

\begin{cor}\label{kappa} Let $f$ be a birational map of a projective surface $X$ over $\mathbf{k}$.
For any integer $n>0$, let $q_n=\frac{\deg_{L}(f^{2n})}{3^{18}\sqrt{2}\deg(f^n)}$. If $q_n\geq 1$, then $$(q_n/2)^{1/n}<\lambda_1(f).$$ And $$\lim_{n\rightarrow\infty}(q_n/2)^{1/n}=\lambda_1(f).$$
\end{cor}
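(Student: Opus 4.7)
The key idea is to apply Theorem \ref{f^2>2^18f} not to $f$ itself but to its iterate $f^n$, and then exploit the multiplicativity $\lambda_1(f^n) = \lambda_1(f)^n$ of the dynamical degree.

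First I would observe that, setting $g := f^n$, we have $\deg_L(g) = \deg_L(f^n)$ and $\deg_L(g^2) = \deg_L(f^{2n})$, so the quantity $q_n$ is exactly the constant that Theorem \ref{f^2>2^18f} attaches to $g$. The hypothesis $q_n \geq 1$ therefore yields
\[
\lambda_1(f)^n \;=\; \lambda_1(g) \;>\; \frac{(4\cdot 3^{36}-1)q_n^2 + 1}{4\cdot 3^{36}\, q_n}.
\]

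Second, I would show that the right-hand side exceeds $q_n/2$ whenever $q_n \geq 1$. Setting $A = 4\cdot 3^{36}$, we have
\[
\frac{(A-1)q_n^2 + 1}{A q_n} \;\geq\; \frac{(A-1)q_n}{A} \;=\; \Bigl(1-\tfrac{1}{A}\Bigr) q_n \;>\; \tfrac{1}{2}q_n,
\]
since $A$ is much larger than $2$. Combining with the previous inequality and taking $n$-th roots gives $(q_n/2)^{1/n} < \lambda_1(f)$, which is the first claim.

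For the convergence, I would write
\[
(q_n/2)^{1/n} \;=\; \frac{\bigl(\deg_L(f^{2n})\bigr)^{1/n}}{\bigl(2\cdot 3^{18}\sqrt{2}\bigr)^{1/n}\,\bigl(\deg_L(f^n)\bigr)^{1/n}},
\]
and pass to the limit termwise. By Proposition-Definition \ref{lamda1} the subsequence $\deg_L(f^{2n})^{1/(2n)}$ tends to $\lambda_1(f)$, so $\deg_L(f^{2n})^{1/n} \to \lambda_1(f)^2$; similarly $\deg_L(f^n)^{1/n} \to \lambda_1(f)$; the constant factor $(2\cdot 3^{18}\sqrt{2})^{1/n}$ tends to $1$. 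Altogether $(q_n/2)^{1/n} \to \lambda_1(f)^2/\lambda_1(f) = \lambda_1(f)$.

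There is no real obstacle here: the whole argument is a cosmetic repackaging of Theorem \ref{f^2>2^18f} applied to iterates. The only subtlety worth mentioning is the identity $\lambda_1(f^n) = \lambda_1(f)^n$, which is immediate from the definition of $\lambda_1$ as the limit of $\deg_L(f^m)^{1/m}$ and the fact (recalled in the introduction) that this limit exists and is independent of the chosen ample class.
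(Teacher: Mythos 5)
Your proposal is correct and follows essentially the same route as the paper: apply Theorem \ref{f^2>2^18f} to the iterate $f^n$, bound $\frac{(4\cdot 3^{36}-1)q_n^2+1}{4\cdot 3^{36}q_n}$ below by $q_n/2$, take $n$-th roots using $\lambda_1(f^n)=\lambda_1(f)^n$, and compute the limit termwise from $\deg_L(f^m)^{1/m}\to\lambda_1(f)$. The only cosmetic difference is the intermediate lower bound (you use $\tfrac{A-1}{A}q_n$ where the paper uses $\tfrac{A-2}{A}q_n$), which changes nothing.
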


\proof
If $$q_n=\frac{\deg_{L}(f^{2n})}{3^{18}\sqrt{2}\deg_{L}(f^n)}\geq1.$$ We see that
$$\lambda_1(f)^n=\lambda_1(f^n)>
\frac{(4\times3^{36}-1)q_n^2+1}{4\times3^{36}q_n}$$ by Theorem \ref{f^2>2^18f}.
Then
$$\lambda_1(f)\geq \Big(\frac{(4\times3^{36}-1)q_n^2+1}{4\times3^{36}q_n}\Big)^{1/n}>\Big(\frac{(4\times3^{36}-2)}{4\times3^{36}}q_n\Big)^{1/n}>(q_n/2)^{1/n}.$$

To conclusion, we see that
$$\lim_{n\rightarrow\infty}(q_n/2)^{1/n}=\lim_{n\rightarrow
\infty}\Big(\frac{\deg_{L} (f^{2n})}{\deg_{L}
(f^{n})}\Big)^{1/n}/ \lim_{n\rightarrow
\infty} (2\times3^{18}\sqrt{2})^{1/n}$$$$=(\lim_{n\rightarrow \infty}(\deg_{L}
(f^{n}))^{1/n})^2/\lim_{n\rightarrow \infty}(\deg_{L}
(f^{n}))^{1/n}=\lambda_1(f)^2/\lambda_1(f)=\lambda_1(f).$$
\endproof

\section{The behavior of $\lambda_1$ in family}\label{variation}
In this section, we apply Theorem \ref{f^2>2^18f} and Corollary \ref{kappa} to study the behavior of the first dynamical degree in families of birational surface maps.

\subsection{Lower semi-continuity of $\lambda_1$}\label{sslsc}
We first prove a version of Theorem 1.3 in the general context of integral schemes. We should rely on the following lemma.
\begin{lem}\label{lowersemicontinuous}
Let $S$ be a smooth integral scheme, $\pi: X\rightarrow S$ be a projective smooth and surjective morphism, $L$ be a relatively nef line bundle on $X$ and $f:  X\dashrightarrow X$ be a birational map over $S$. Let $U$ be the  sub-open set of $S$ such that for any point $p\in U$, $f$ induces a birational map $f_p$ of the special fiber $X_p$, and write $L_p=L_{|X_p}$. Then $p\rightarrow \deg_{L_p}(f_p)$ is a lower semi-continuous function on $U$.

\end{lem}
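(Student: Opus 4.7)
The plan is to express $\deg_{L_p}(f_p)$ as an intersection number on a single smooth model of the graph of $f$ over $S$, and to exploit two complementary properties: intersection numbers with fibers in a flat family are locally constant, and relative nefness of $L$ forces the correction terms coming from the ``vertical'' components in special fibers to have the right sign.

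First I would reduce to the case where $S$ is a regular integral curve. Since $p\mapsto\deg_{L_p}(f_p)$ is a constructible integer-valued function on $U$, lower semi-continuity is equivalent to its being stable under generalization: thus for any $p_0\in U$ and any integral curve $T\subset S$ through $p_0$ with generic point $\eta$, it suffices to show $\deg_{L_{p_0}}(f_{p_0})\leq \deg_{L_\eta}(f_\eta)$. Base changing $X$, $L$, $f$ along the normalization of $T$ near $p_0$ brings us to the situation where $S$ is a regular $1$-dimensional integral scheme, where flatness over $S$ is automatic for any integral scheme dominating it.

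Next, let $\Gamma\subset X\times_S X$ be the graph of $f$ with projections $\pi_1,\pi_2:\Gamma\to X$, and let $\mu:\tilde\Gamma\to\Gamma$ be a resolution of singularities (available in this dimension in any characteristic); set $\sigma=\pi_1\circ\mu$ and $\tau=\pi_2\circ\mu$. Then $\tilde\Gamma\to S$ is flat, each fiber $\tilde\Gamma_p$ is a Cartier divisor on $\tilde\Gamma$, and its class is independent of $p$. For $p\in U$, decompose the cycle $[\tilde\Gamma_p]=m_pY_p+E_p$ where $Y_p$ is the unique component of $\tilde\Gamma_p$ that maps birationally onto $X_p$ under $\sigma_p$, and hence resolves $f_p$, $m_p\geq 1$ is its multiplicity, and $E_p\geq 0$ is the effective cycle of the remaining components.

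The conclusion is then a direct intersection calculation. Constancy of intersection numbers with fibers in a flat family, combined with the projection formula at the generic point, yields
\[
\bigl(\sigma^*L\cdot\tau^*L\cdot [\tilde\Gamma_p]\bigr)_{\tilde\Gamma} = \bigl(\sigma^*L\cdot\tau^*L\cdot [\tilde\Gamma_\eta]\bigr)_{\tilde\Gamma} = (\sigma_\eta^*L_\eta\cdot\tau_\eta^*L_\eta)_{\tilde\Gamma_\eta} = \deg_{L_\eta}(f_\eta),
\]
and the same projection formula applied on $Y_p$ gives $(\sigma^*L\cdot\tau^*L\cdot Y_p)_{\tilde\Gamma}=\deg_{L_p}(f_p)$. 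For every component $C$ of $E_p$, both $\sigma|_C$ and $\tau|_C$ factor through $X_p$, so $\sigma^*L|_C$ and $\tau^*L|_C$ are nef on $C$ as pullbacks of the relatively nef class $L_p$, whence $(\sigma^*L\cdot\tau^*L\cdot E_p)_{\tilde\Gamma}\geq 0$. Summing,
\[
\deg_{L_\eta}(f_\eta) = m_p\deg_{L_p}(f_p)+(\sigma^*L\cdot\tau^*L\cdot E_p)_{\tilde\Gamma} \geq \deg_{L_p}(f_p),
\]
which is the desired inequality. The main technical point is setting up the smooth flat model $\tilde\Gamma\to S$ together with the decomposition of its fibers into a main and a vertical part; this is precisely what the reduction to a regular $1$-dimensional base at the outset buys for free.
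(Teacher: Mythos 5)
Your proof is correct, and its computational heart is the same as the paper's: the intersection number of $\pi_1^*L\cdot\pi_2^*L$ against the fibers of the relative graph is constant by conservation of number, the degree of $f_p$ is the contribution of the unique component of the fiber dominating $X_p$, and the remaining components contribute nonnegatively because $L$ is relatively nef. Where you differ is in the packaging. The paper works directly on the (unresolved) graph $\Gamma\subset X\times_S X$ over the full base $S$, citing \cite{Fulton1984} for the constancy of $\int_{\Gamma_x}\pi_{1x}^*L_x\cdot\pi_{2x}^*L_x$, and then deduces semicontinuity by a Noetherian closure argument (showing $R=\{\deg\le\lambda\}$ equals its closure by passing to generic points of components of $\overline{R}$); you instead invoke constructibility to reduce to a DVR base and then resolve $\Gamma$. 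The two dévissages are interchangeable, but note that your appeal to resolution of singularities of the threefold $\tilde\Gamma$ is an unnecessary dependence on a deep theorem (three-fold resolution in positive or mixed characteristic): Fulton-style intersection numbers of Cartier divisors against the fundamental cycle of the fiber make sense on $\Gamma$ itself, which is what the paper uses. On the other hand, your write-up is more careful than the paper's on two points that the paper leaves implicit: the multiplicity $m_p\ge 1$ of the main component in the cycle-theoretic fiber, and the fact that nonnegativity of the vertical contributions genuinely requires the relative nefness hypothesis (the paper's inequality $\int_{\Gamma'_x}\le\int_{\Gamma_x}$ silently uses it). Two small points you should still justify if writing this up in full: constructibility of $p\mapsto\deg_{L_p}(f_p)$ (which is proved by the same generic-constancy statement you are using, so the reduction is not entirely free), and the identification of the dominating component $Y_p$ with a model of the graph of $f_p$ (i.e.\ that $\tau_p\circ\sigma_p^{-1}=f_p$ on $Y_p$), which holds because $\Gamma_p$ contains the graph of $f_p$ over the locus where $f$ restricts to an isomorphism near $X_p$.
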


\proof
 Let $\Gamma\subseteq X\times_SX$ be the graph of $f$, $\pi_1$,$\pi_2$ the natural projections onto $X$ such that $\pi_2\circ \pi_1^{-1}=f$. For any point $x\in U$, let $\Gamma_x$ be the fiber of $\Gamma$ above $x$, and $\pi_{1x},\pi_{2x}$ the restriction of $\pi_1,\pi_2$ on $\Gamma_x$.  Let $\kappa$ be the generic point of $S$. Then $$\int_{\Gamma_{x}}\pi_{1x}^*L_x\cdot \pi_{2x}^*L_x=\int_{\Gamma_{\kappa}}\pi_{1\kappa}^*L_{\kappa}\cdot \pi_{2\kappa}^*L_{\kappa}=\deg_{L_\kappa}(f_{\kappa})$$ is a constant function on $U$ \cite{Fulton1984}. For any $x\in U$, $\Gamma_x$ may have many irreducible components, but there is only one component $\Gamma_x'$ satisfies $\pi_{1x}(\Gamma_x')=X$. Then $\deg_{L_x}(f_{x})=\int_{\Gamma_x'}\pi_{1x}L_x\cdot \pi_{2x}L_x\leq\int_{\Gamma_{x}}\pi_{1x}L_x\cdot \pi_{2x}L_x.$ And there is an nonempty open set $V$ of $U$ such that for any point $x\in V$, $\Gamma_x$ is irreducible, then $\deg_{L_x}(f_{x})=\int_{\Gamma_{x}}\pi_{1x}^*L_x\cdot \pi_{2x}^*L_x=\deg_{L_\kappa}(f_{\kappa})$.

 For any $\lambda\in \mathbb{R}$, define $R=\{x\in U| \deg_{L_x}(f_{x})\leq \lambda\},$ and set $Z=\overline{R}$. Supose there is a point $x\in Z-R$, then we can find an irreducible component $Y$ of $Z$ such that $x\in Y$. Then $\overline{Y\bigcap Z}=Y$, let $W=Y\bigcap Z$ and $\eta$ the generic point of $Y$. By the previous argument, we see that there is an open $O$ of $Y$, such that for any point $y\in O$, $\deg_{L_y}(f_{y})=\deg_{L_\eta}(f_{\eta})$. We may pick $y\in O\bigcap W$. Then $\lambda<\deg_{L_x}(f_{x})\leq \deg_{L_\eta}(f_{\eta})=\deg_{L_y}(f_{y})\leq \lambda.$ We get a contradiction. So $R=Z$ is closed. Then we conclude that $\deg_{L_x}(f_{x})$ is lower semi-continuous on $U.$

\endproof

\begin{thm}\label{generally}
 Let $S$ be an integral scheme, $\pi: X\rightarrow S$ a projective smooth and surjective morphism and $f:  X\dashrightarrow X$ be a birational map over $S$. Let $U$ be the open set of points $p\in S$ for which the reduction $f_p$ is a birational map. Then the function $F: p\in U\rightarrow \lambda_1(f_x)$ is lower semi-continuous.

\end{thm}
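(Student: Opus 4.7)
The plan is to combine the effective lower bound for $\lambda_1$ from Corollary~\ref{kappa} with the lower semi-continuity of individual iterate degrees supplied by Lemma~\ref{lowersemicontinuous}. It suffices to show that for each $p_0\in U$ and each $\lambda<\lambda_1(f_{p_0})$, the set $\{p\in U:\lambda_1(f_p)>\lambda\}$ contains an open neighborhood of $p_0$. The case $\lambda<1$ is automatic because $\lambda_1\geq 1$, so one may assume $\lambda\geq 1$ and in particular $\lambda_1(f_{p_0})>1$.

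The first step is to apply Corollary~\ref{kappa} to $f_{p_0}$, producing an integer $n\geq 1$ with $q_n(f_{p_0})\geq 1$ and $(q_n(f_{p_0})/2)^{1/n}>\lambda$; that is,
\[
\deg_{L_{p_0}}(f_{p_0}^{2n}) \;>\; 2\cdot 3^{18}\sqrt{2}\,\lambda^{n}\,\deg_{L_{p_0}}(f_{p_0}^{n}).\quad(\star)
\]
Applying Lemma~\ref{lowersemicontinuous} to the birational $S$-maps $f^{n}$ and $f^{2n}$ then yields lower semi-continuity on $U$ of $p\mapsto\deg_{L_p}(f_p^{n})$ and $p\mapsto\deg_{L_p}(f_p^{2n})$, after shrinking $U$ if necessary so that $(f^{m})_p=(f_p)^{m}$ as rational maps on $X_p$ for $m=n,2n$.

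The delicate point is that $\deg_{L_p}(f_p^{n})$ appears in the denominator of $q_n(f_p)$, so lower semi-continuity of numerator and denominator does not directly translate into a lower bound on $q_n(f_p)$ in an ambient neighborhood. The workaround is to reduce to the case in which $p_0$ is a generic point, by restricting the family to the integral closed subscheme $T:=\overline{\{p_0\}}$ (with its reduced structure), for which $p_0$ is the generic point. Re-running the argument of Lemma~\ref{lowersemicontinuous} inside this subfamily produces a dense open $V_T\subseteq T$ containing $p_0$ on which both $\deg_{L_p}(f_p^{n})$ and $\deg_{L_p}(f_p^{2n})$ are constantly equal to their values at $p_0$; on $V_T$ the inequality $(\star)$ holds unchanged, and Corollary~\ref{kappa} yields $\lambda_1(f_p)>\lambda$ for every $p\in V_T$.

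It remains to upgrade this to a neighborhood of $p_0$ in $S$. Since $T\setminus V_T$ is closed in $T$ and hence in $S$, the open set $W:=S\setminus(T\setminus V_T)$ contains $p_0$ and satisfies $W\cap T=V_T$, so the conclusion is already established on $W\cap T$. For a point $p\in W\setminus T$ that is a generization of $p_0$, applying Lemma~\ref{lowersemicontinuous} inside the closed integral subscheme $\overline{\{p\}}$ gives $\deg_{L_{p_0}}(f_{p_0}^{m})\leq\deg_{L_p}(f_p^{m})$ for every $m\geq 1$, whence $\lambda_1(f_p)\geq\lambda_1(f_{p_0})>\lambda$ after taking $m$-th roots and letting $m\to\infty$. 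The main obstacle I expect is to ensure that every point of $W\setminus T$ really is a generization of $p_0$; handling the remaining incomparable points likely requires shrinking $W$ further using the integer-valued nature of $\deg_{L_p}(f_p^{m})$ and its uniform upper bound $\deg_{L_\kappa}(f_\kappa^{m})$ in order to extract openness of the relevant constructible strata.
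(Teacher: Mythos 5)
Your reduction to the generic point of $T=\overline{\{p_0\}}$ and the use of Corollary \ref{kappa} together with Lemma \ref{lowersemicontinuous} to produce a dense open $V_T\subseteq T$ on which $\lambda_1(f_p)>\lambda$ is exactly the engine of the paper's proof. But the final step --- upgrading $V_T$ to an honest open neighborhood of $p_0$ in $S$ --- is where the argument genuinely breaks down, and the obstacle you flag at the end is real, not a technicality. The set $W=S\setminus(T\setminus V_T)$ contains many points that are neither generizations nor specializations of $p_0$ (for instance all closed points of $S$ lying off $T$, whenever $p_0$ is the generic point of a proper subvariety), and for those you have established nothing. Nor can the gap be closed by imposing $q_n(f_p)\geq q_n(f_{p_0})$ on a neighborhood: lower semi-continuity of the iterate degrees makes the condition on the numerator ($\deg_{L_p}(f_p^{2n})$ large) open, but the condition on the denominator ($\deg_{L_p}(f_p^{n})$ small) closed, so the locus where $q_n$ is bounded below is only locally closed. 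The ``neighborhood of every point'' formulation of lower semi-continuity is the wrong packaging here.

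The paper sidesteps this by proving instead that the sublevel set $R_\lambda=\{p\in U\mid \lambda_1(f_p)\leq\lambda\}$ is closed, by the same device as in the proof of Lemma \ref{lowersemicontinuous}: if $x\in\overline{R_\lambda}\setminus R_\lambda$, pick an irreducible component $Y$ of $\overline{R_\lambda}$ through $x$ with generic point $\eta$; the specialization inequality $\deg_{L_x}(f_x^m)\leq\deg_{L_\eta}(f_\eta^m)$ gives $\lambda_1(f_\eta)\geq\lambda_1(f_x)>\lambda$, and then your own computation (Corollary \ref{kappa} at $\eta$ plus generic constancy of $\deg_{L_p}(f_p^{n})$ and $\deg_{L_p}(f_p^{2n})$ on a dense open $V\subseteq Y$) forces $\lambda_1(f_p)>\lambda$ on $V$, contradicting the density of $R_\lambda\cap Y$ in $Y$. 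You already have every ingredient --- you are running precisely this argument, only on the wrong closed subscheme ($\overline{\{p_0\}}$ rather than a component of $\overline{R_\lambda}$). Reorganizing the endgame this way closes the proof.
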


\begin{proof}Denote by $\kappa$ the generic point of $S$ and $\lambda_{1,\kappa}$ the first dynamical degree of $f_{\kappa}$.
 As in the proof of Lemma \ref{lowersemicontinuous}, we only have to show that for any $\lambda<\lambda_{1,\kappa}$,  there is a nonempty open set $U_{\lambda}$ of $U$, such that for any point $p\in U_{\lambda}$, $\lambda_{1,\kappa}\geq \lambda_1(f_p)\geq \lambda$.

For any $p\in U, n>0$, $ \deg_{L_p}(f^n_p)\leq\deg_{L_{\kappa}}(f^n_{\kappa})$ hence
$$\lambda_1(f_p)\leq \lambda_1(f_{\kappa}).$$

The theorem trivially holds in the case $\lambda\leq 1$, so we may assume that $\lambda_{1,\kappa}>\lambda>1$.
For any
$\lambda_{1,\kappa}>\lambda>1$, there is an integer $n$ such that
$$\Big(\frac{\deg_{L_{\kappa}}(f_{\kappa}^{2n})}{2\times 3^{18}\sqrt{2}\deg_{L_{\kappa}}(f_{\kappa}^n)}\Big)^{1/n}>\lambda >1$$ by Corollary \ref{kappa}.
By Lemma \ref{lowersemicontinuous}, there is an nonempty open set $U_{\lambda}\subseteq U$ such that for any $p\in U_{\lambda}$,
 $\deg_{L_p}(f_p^n)=\deg_{L_{\kappa}}(f_{\kappa}^n)$ and
$\deg_{L_p}(f_p^{2n})=\deg_{L_{\kappa}}(f_{\kappa}^{2n})$. We conclude that
$$\Big(\frac{\deg_{L_p}(f_{p}^{2n})}{2\times 3^{18}\sqrt{2}\deg_{L_p}(f_{p}^n)}\Big)^{1/n}>\lambda,$$
and by Corollary \ref{kappa}, for any $p\in U_{\lambda}$, $$\lambda_1(f_p)\geq
\Big(\frac{\deg_{L_p}(f_{p}^{2n})}{2\times 3^{18}\sqrt{2}\deg_{L_p}(f_{p}^n)}\Big)^{1/n}>\lambda,$$ as required.
\end{proof}

\begin{cor} Let $S$ be an integral scheme, $\pi: X\rightarrow S$ be a projective smooth and surjective morphism, $f:  X\dashrightarrow X$ be a birational map over $S$ such that for any $p\in S$, $f_p$ is birational. Then there is an integer $M>0$ such that for any $p\in S$, $\lambda_1(f_p)=1$ if and only if $$\frac{\deg_{L_p}(f_{p}^{2n})}{2\times 3^{18}\sqrt{2}\deg_{L_p}(f_{p}^n)}<1$$ for any $n=1,2,\cdots,M.$

\end{cor}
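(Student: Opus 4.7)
The corollary asks for a uniform bound $M$ so that the inequalities $\deg_{L_p}(f_p^{2n}) < 2\cdot 3^{18}\sqrt{2}\,\deg_{L_p}(f_p^n)$ for $n\leq M$ detect the locus $\{\lambda_1(f_p)=1\}$. Writing $q_n^{(p)}:=\deg_{L_p}(f_p^{2n})/(3^{18}\sqrt{2}\,\deg_{L_p}(f_p^n))$, the condition is $q_n^{(p)}<2$. The ``only if'' direction is an immediate consequence of Corollary \ref{kappa}: if $\lambda_1(f_p)=1$ and $q_n^{(p)}\geq 1$, then $(q_n^{(p)}/2)^{1/n}<\lambda_1(f_p)=1$ forces $q_n^{(p)}<2$, while the inequality is trivial when $q_n^{(p)}<1$. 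This works for any $M$.

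My plan for the substantive ``if'' direction is Noetherian induction on $S$. Let $\kappa$ denote the generic point. If $\lambda_1(f_\kappa)=1$, then the argument inside Theorem \ref{generally} gives $\deg_{L_p}(f_p^n)\leq \deg_{L_\kappa}(f_\kappa^n)$ everywhere, so $\lambda_1(f_p)\leq 1$ pointwise and any $M$ suffices. Otherwise $\lambda_1(f_\kappa)>1$; by the limit statement $(q_n^{(\kappa)}/2)^{1/n}\to\lambda_1(f_\kappa)>1$ of Corollary \ref{kappa}, I can pick $n_0$ with $q_{n_0}^{(\kappa)}>2$ strictly. Lemma \ref{lowersemicontinuous} provides a nonempty open set $V\subseteq S$ on which $\deg_{L_p}(f_p^{2n_0})$ attains the generic value $\deg_{L_\kappa}(f_\kappa^{2n_0})$, while $\deg_{L_p}(f_p^{n_0})\leq\deg_{L_\kappa}(f_\kappa^{n_0})$ holds everywhere. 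Combining these yields $q_{n_0}^{(p)}\geq q_{n_0}^{(\kappa)}>2$ on $V$, so the condition $q_{n}^{(p)}<2$ already fails at $n=n_0$; simultaneously, Corollary \ref{kappa} gives $\lambda_1(f_p)>1$ on $V$, so both sides of the equivalence are false there and any $M\geq n_0$ works on $V$.

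The closed complement $Z:=S\setminus V$ is proper, and since $S$ is Noetherian it decomposes into finitely many irreducible components $Z_1,\ldots,Z_k$ of strictly smaller dimension. Each $Z_i$, endowed with the base-changed family, satisfies the hypotheses of the corollary: $X_{Z_i}\to Z_i$ remains projective, smooth, and surjective by stability under base change, and the restricted rational map has birational reductions on every fiber (since the original does). The induction hypothesis yields integers $M_i$ that work on each $Z_i$, and taking $M:=\max(n_0,M_1,\ldots,M_k)$ gives the required uniform bound on all of $S$. The step that will require the most care is securing the \emph{strict} inequality $q_{n_0}^{(\kappa)}>2$ rather than merely $\geq 2$, so that the propagated inequality on $V$ genuinely contradicts the strict condition $q_n<2$; this is precisely what the limit in Corollary \ref{kappa} provides.
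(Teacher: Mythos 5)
Your argument is correct, and it rests on exactly the same two inputs as the paper's proof (Corollary \ref{kappa} for both the lower bound and the limit, and Lemma \ref{lowersemicontinuous} for generic constancy of $p\mapsto\deg_{L_p}(f_p^n)$ together with the specialization inequality $\deg_{L_p}(f_p^n)\leq\deg_{L_\kappa}(f_\kappa^n)$), but it is organized differently. The paper does not stratify: it sets $Z_m=\{p:\ q_n^{(p)}<2 \text{ for all } n\leq m\}$, observes that the closures $Y_m=\overline{Z_m}$ form a descending chain of closed sets which stabilizes at some $Y_M$ by Noetherianity, and then shows $Z=\{\lambda_1(f_p)=1\}$ equals $Y_M$ (hence $Z_M$) by passing to the generic point $\eta$ of a component of $Y_M$ and deriving a contradiction from $\lambda_1(f_\eta)>1$ via the same open-density argument you use on $V$. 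Your Noetherian induction on $S$, peeling off the open set $V$ where a single $n_0$ already witnesses $q_{n_0}>2$ and recursing on the components of the complement, is arguably more transparent and makes the uniformity of $M$ explicit as a maximum over finitely many strata; the paper's chain-stabilization trick buys a slightly shorter write-up and identifies $M$ intrinsically as the stabilization index, at the cost of a less direct contradiction argument. Two shared caveats, neither specific to your version: both proofs implicitly need $S$ Noetherian (for the chain to stabilize, respectively for the induction to terminate), and both apply Lemma \ref{lowersemicontinuous} to irreducible closed subsets that need not be smooth, although the smoothness hypothesis in that lemma plays no role in the part being used. You are also right that the only delicate point is getting the strict inequality $q_{n_0}^{(\kappa)}>2$ at the generic point, which the limit $(q_n^{(\kappa)}/2)^{1/n}\to\lambda_1(f_\kappa)>1$ indeed supplies.
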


\proof
 Fix any integer $m>0$, we define $Z_m=\{p\in S|\frac{\deg_{L_p}(f_{p}^{2n})}{2\times 3^{18}\sqrt{2}\deg_{L_p}(f_{p}^n)}<1, \text{ for any } 0<n\leq m\}$ and $Z=\{p\in S| \lambda_1(f_p)=1\}$. We observe that $Z_m\supseteq Z_{m+1}$ for any $m\geq 1$, and that by Theorem \ref{generally}, $Z$ is closed. By Corollary \ref{kappa}, we have $Z=\bigcap_{m\geq 1} Z_m$. Let $Y_m=\overline{Z_m}$, then $Y_{m}\supseteq Y_{m+1}$ for any $m\geq 1$. Since $Y_m$ is closed, there is an integer $M>0$ such that $Y_M=\bigcap_{m\geq 1}Y_m$. Then $Z\subseteq Y_M.$

 If $Z\neq Y_M$, there is a point $x\in Y_M-Z$. Let $Y$ be an irreducible component of $Y_M$ containing $x$ and $\eta$ the generic point of $Y$. Then $Y=\overline{Y\bigcap Z_N}$ for any $N\geq M$. Since $\lambda_1(f_x)>1$, we have $\lambda_1(f_{\eta})>1$ by Lemma \ref{lowersemicontinuous}. There exists $N\geq M$ such that $\eta$ is not in $Z_N$. Then $\frac{\deg_{L_{\eta}}(f_{\eta}^{2N})}{2\times 3^{18}\sqrt{2}\deg_{L_{\eta}}(f_{\eta}^N)}\geq 1$. By lemma \ref{lowersemicontinuous}, there is a sub-open set $U$ of $Y$ such that for any point $y\in U$ ,$n=1,2,\cdots , N$,  we have $\deg_{L_y}(f_{y}^n)=\deg_{L_{\eta}}(f_{\eta}^n)$ and $\deg_{L_y}(f_{y}^{2n})=\deg_{L_{\eta}}(f_{\eta}^{2n})$. Then $U\bigcap Z_N=\emptyset$. It contradicts the fact that $Y=\overline{Y\bigcap Z_N}$. So we get $Z= Y_M$. Then $Y_M\supseteq Z_M\supseteq Z= Y_M$, so $Z=Z_M$.

\endproof

\subsection{Proof of Theorem \ref{1general}}\label{pot1g}
 We denote by $\Bir_d$ the space of birational maps of $\mathbb{P}^2(\mathbf{k})$ with degree $d$. It has a natural algebraic structure which make it to be a quasi-projective  space.

By Theorem \ref{generally}, it is easy to see that if a component of $\Bir_d$ contains a point with $\lambda_1>1$, then $\lambda_1>1$ for a general point in this component. However since there are no good description of the components of $\Bir_d$ for $d\geq 3$ \cite{bir}, it is not a priori obvious to see that if any component contains a such point.

\begin{thm}(=Theorem \ref{1general})\label{general} Let $d\geq 2$ be an integer and $\Bir_d$ the space of birational maps of $\mathbb{P}^2(\mathbf{k})$ of degree $d$. Then for any $\lambda<d$, $U_{\lambda}=\{f\in \Bir_d | \lambda_1(f)>\lambda\}$ is a Zariski dense open set of $\Bir_d$.

In particular, for a general birational map $f$ of degree $d>1$, we have $\lambda_1(f)>1$.
\end{thm}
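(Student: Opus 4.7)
The plan is to combine the lower semicontinuity of $\lambda_1$ in families (Theorem \ref{generally}) with the effective bound of Corollary \ref{kappa}, together with a generic-position argument using the $\PGL_3(\mathbf{k})^2$-action on $\Bir_d$ by pre- and post-composition. For openness of $U_\lambda$, I would apply Theorem \ref{generally} to the tautological family $\P^2 \times S \to S$ equipped with the universal birational self-map $\Phi$, where $S$ runs over the reduced irreducible components of $\Bir_d$; the theorem yields that $s \mapsto \lambda_1(\Phi_s)$ is lower semicontinuous on each such $S$, so $U_\lambda$ is Zariski open in $\Bir_d$.

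For Zariski density, since $U_\lambda$ is open, it suffices to show $U_\lambda$ meets every irreducible component $C$. Given $\lambda < d$, choose $n = n(\lambda, d)$ large enough that $d^n \geq 3^{18}\sqrt{2}$ and $(d^n / (2 \cdot 3^{18}\sqrt{2}))^{1/n} > \lambda$; by submultiplicativity of the degree on $\P^2$, any $g \in C$ with $\deg g^{2n} = d^{2n}$ automatically satisfies $\deg g^n = d^n$, and Corollary \ref{kappa} then gives $\lambda_1(g) > \lambda$. So the task reduces to producing one $g \in C$ with $\deg g^{2n} = d^{2n}$.

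Fix any $f_0 \in C$ and consider the family $g_{h_1, h_2} := h_1 \circ f_0 \circ h_2$ parametrized by $(h_1, h_2) \in \PGL_3(\mathbf{k})^2$; the parameter space is irreducible, the parametrization is a morphism to $\Bir_d$, and at $(\id, \id)$ it lands in $C$, so the whole family lies in $C$. By Lemma \ref{lowersemicontinuous} applied to this family, the locus $V := \{(h_1, h_2) : \deg g_{h_1, h_2}^{2n} = d^{2n}\}$ is Zariski open in $\PGL_3^2$, and non-emptyness of $V$ will finish the proof. Since $\mathcal{E}(g_{h_1, h_2}) = h_2^{-1}(\mathcal{E}(f_0))$, $I(g_{h_1, h_2}) = h_2^{-1}(I(f_0))$, and $g_{h_1, h_2}$ contracts the component $h_2^{-1}(E_i)$ to $h_1(p_i)$ where $p_i = f_0(E_i)$, belonging to $V$ is equivalent to the finitely many conditions that for every exceptional point $p_i$ and every $k = 0, 1, \ldots, 2n - 2$, the iterated point $g_{h_1, h_2}^k(h_1(p_i))$ avoid the finite set $h_2^{-1}(I(f_0))$.

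The main obstacle I anticipate is an inductive verification that each of these polynomial non-vanishing conditions on $(h_1, h_2)$ cuts out a proper Zariski closed subset of $\PGL_3^2$. At step $k = 0$, the condition $h_1(p_i) \notin h_2^{-1}(I(f_0))$ is equivalent to $h_2 h_1(p_i) \notin I(f_0)$, which is non-trivial by transitivity of $\PGL_3$ on $\P^2$. For the inductive step I would restrict to the Zariski open where the first $k$ conditions hold and use the positive-dimensional residual freedom in $(h_1, h_2)$ (coming from stabilizer subgroups in $\PGL_3^2$) to move the $(k+1)$-th iterate off the finite forbidden set $h_2^{-1}(I(f_0))$. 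Once non-triviality is established at every step, irreducibility of $\PGL_3^2$ gives $V \neq \emptyset$, and the resulting $g = g_{h_1, h_2} \in C$ has $\lambda_1(g) > \lambda$; the ``in particular'' statement is the case $\lambda = 1$.
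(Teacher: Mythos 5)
Your skeleton is the same as the paper's: openness of $U_\lambda$ from the lower semicontinuity of Theorem \ref{generally} applied to the tautological family over each component, and density by producing in each component one map whose degree is multiplicative for enough iterates, then invoking the effective bound (Corollary \ref{kappa} / Theorem \ref{f^2>2^18f}). The paper even uses the same group-action device, though with the one-sided family $A\circ f$, $A\in\PGL(3)$, rather than your two-sided $h_1\circ f_0\circ h_2$. The reduction via submultiplicativity ($\deg g^{2n}=d^{2n}$ forces $\deg g^n=d^n$) is fine, as is the description of $V$ by orbit conditions.

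However, the decisive step --- non-emptiness of $V$, i.e.\ the existence of one parameter for which all the orbit-avoidance conditions hold --- is exactly the point you defer to as ``the main obstacle I anticipate,'' and the inductive fix you sketch does not work as stated. Perturbing $(h_1,h_2)$ to move the $(k+1)$-th iterate off the forbidden set simultaneously moves every earlier iterate and the forbidden set $h_2^{-1}(I(f_0))$ itself: the point $g_{h_1,h_2}^{k}(h_1(p_i))$ depends on $(h_1,h_2)$ through the entire composition, so there is no stabilizer subgroup of $\PGL_3^2$ that freezes the orbit up to step $k$ while leaving ``residual freedom'' for the next point. The paper closes this gap with an explicit witness rather than a perturbation: for each contracted curve $C_i$ with image point $x_i$, it chooses $A_i\in\PGL(3)$ sending $x_i$ to a point $y_i\in C_i\setminus(I(f)\cup I(f^{-1}))$; then $(A_i\circ f)(y_i)=A_i(x_i)=y_i$, so after one step the forward orbit of $x_i$ under $A_i\circ f$ is the single fixed point $y_i$, which avoids $I(A_i\circ f)=I(f)$ for all time. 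Hence for every length $l$ the open condition attached to the $i$-th curve is satisfied at $A_i$, each $U_{l,i}$ is nonempty, and irreducibility of $\PGL(3)$ makes the finite intersection over $i$ nonempty. You need to supply such a witness (or an equivalent device) to make the induction go through; without it the density half of the theorem is not proved.
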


\begin{rem}If the base field is uncountable, then the set $$\{f\in \Bir_d| \lambda_1(f)=d\}\supseteq \bigcap_{n=1}^{\infty}U_{d-1/n}$$ is dense in $\Bir_d$. So for any very general point $f\in \Bir_d$ we have $\lambda_1(f)=d$.

\end{rem}

\begin{rem}In fact, we shall see that in any $\PGL(3)$ orbit, points with $\lambda_1>\lambda$ are dense.

\end{rem}

\proof[Proof of Theorem \ref{general}]
We claim that for any irreducible component $S$ of $\Bir_d$, there is a point $f\in S$ such that $\lambda_1(f)>\lambda$.

\proof[Proof of the claim]

Choose $f\in\Bir_d$, consider the map $$T_f:\PGL(3)\rightarrow \Bir_d$$ sending $A$ to $A\circ f$. Let $I(f)=\{x_1,x_2,\dots,x_n\}$, and $I(f^{-1})=\{z_1,z_2,\cdots,z_m\}$ be the indeterminacy sets of $f$ and $f^{-1}$ respectively. For any $i=1,2,\cdots,n$, there is a curve $C_i$ such that $f(C_i)=x_i$. Let $y_i$ be a point in $C_i-(I(f)\bigcup I(f^{-1}))$. We can find a point $A_i\in \PGL(3)$ such that $A_i(x_i)=y_i$. Then for any $n\geq 0$ $(A_i\circ f)^n\circ A_i(x_i)=y_i$.

For any $i=1,2,\cdots,n$, we define the map $$V_{1,i}: \PGL(3)\rightarrow \mathbb{P}^2$$ by $V_{1,i}(A):=A(x_i)$. Let $U_{0,i}=\PGL(3)$ and set $$U_{1,i}=V_{1,i}^{-1}(\mathbb{P}^2-I(f)).$$ Then $U_{1,i}$ is an open set of $\PGL(3)$. Since $A_i(x_i)=y_i$ is not in $I(f)$, $A_i\in U_{1,i}$. So $U_{1,i}$ is not empty. Now we define the map $$V_{2,i}: U_{1,i}\rightarrow \mathbb{P}^2,$$ by $$V_{2,i}(A)=A\circ f\circ A(x_i)$$  and set $$U_{2,i}=V_{2,i}^{-1}(\mathbb{P}^2-I(f)).$$  Since $A\circ f\circ A_i(x_i)=y_i$, $U_{2,i}$ is as before an open set containing $A_i$.

By induction, for any $i$ we build a sequence of non empty open subsets of $\PGL(3)$ $A_i\in U_{l,i}\subseteq U_{l-1,i}$ and maps $V_{l+1,i}: U_{l,i}\rightarrow \mathbb{P}^2$ sending $A$ to $(A\circ f)^{l}\circ A(x_i)$. For any $A\in U_{l,i}$, $(A\circ f)^{t}\circ A(x_i)$ is not in $I(f)$ for $t=0,\cdots, l-1$. Let $U_l=\bigcap_{i=1}^nU_{l,i}$. Since the $U_{l,i}$'s are nonempty and $\PGL(3)$ is irreducible, then $U_l$ is also non empty and Zariski dense. For any $A$ in $U_l$, we have $\deg((A\circ f)^{s})=(\deg(A\circ f))^{s}=d^s$ for $s=0,1,\cdots, l+1.$

Pick $l$ sufficiently large such that $$\frac{(4\times3^{36}-1)(d^l/3^{18}\sqrt{2})^2+1}{4\times3^{36}(d^l/3^{18}\sqrt{2})}>\lambda.$$ For any  point $A\in U_{2l-1}$, $\deg (A\circ f)^{2l}=d^{2l}$ and $\deg (A\circ f)^{l}=d^{l}$, hence by Theorem \ref{f^2>2^18f}, $\lambda_1(A\circ f)>\lambda$.

Pick any irreducible component $S$ of $\Bir_d$. There is a birational map $$G:\mathbb{P}^2\times S\dashrightarrow \mathbb{P}^2\times S$$ over $S$ by $G(x,f)=(f(x),x).$  Then the map $G_f$ on the fiber at $f\in S$ induced by $G$ is exactly $f$.
For any $f\in S-$\{points in other components\}, we have $A\circ f \in S$ for any $A\in \PGL(3)$ since $\PGL(3)$ is irreducible. By the discussion of the previous paragraph, there is a point $A\in \PGL(3)$ such that $\lambda_1(A\circ f)>\lambda$. Let $\kappa$ be the generic point of $S$, then by Theorem \ref{generally}, $\lambda_1(G_{\kappa})\geq\lambda_1(G_{A\circ f})= \lambda_1(A\circ f)>\lambda$. Again by Theorem \ref{generally}, for a general point $f$ in $S$, we have $\lambda_1(f)>\lambda$.
\endproof

 By Theorem \ref{generally}, we see that $F$ is lower semi-continue on each component of $\Bir_d$. So itself is lower semi-continue. Then $U_{\lambda}=F^{-1}((\lambda,+\infty])$ is open. Since $U_{\lambda}$ meets all the irreducible components, it is dense.
\endproof

\section{An example}\label{example}
In this section we provide an example of a birational map $f$ over $\mathbb{Z}$ such that $\lambda_1(f_p)<\lambda_1(f)$ for any prime number $p>0$.
We introduce two birational maps $g=[xy,xy+yz,z^2]$ and $h=[x,x-2z,-x+y+3z]$.

\begin{pro}\label{fAS}The map $f=h\circ g=[xy,xy-2z^2,yz+3z^2]$ is algebraically stable.
\end{pro}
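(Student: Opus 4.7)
The plan is to apply criterion (ii) of the algebraic stability Proposition-Definition: it is enough to show that no irreducible curve $V\subseteq \mathbb{P}^2$ has an iterate $f^n(V)$ contained in $I(f)$. Since $I(f)$ is finite, such a $V$ would have to be contracted by some $f^k$ to a point whose forward $f$-orbit then enters $I(f)$. So the argument reduces to two concrete steps: (a) identify $I(f)$ and the exceptional curves of $f$; (b) verify that for each exceptional curve $C$ of $f$, the forward orbit under $f$ of the point $f(C)$ avoids $I(f)$.

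For (a), note first that $h$ is linear with determinant $2\ne 0$, hence an automorphism of $\mathbb{P}^2$; consequently $I(f)=I(g)$ and the exceptional curves of $f$ coincide with those of $g$. A direct common-zero computation on $g=[xy,\,xy+yz,\,z^2]$ yields $I(f)=\{[1{:}0{:}0],\,[0{:}1{:}0]\}$. The Jacobian determinant of $g$ equals $2yz^{2}$ up to a nonzero constant, so every exceptional curve of $g$ is contained in $\{yz=0\}$; restricting shows $g$ contracts $\{z=0\}$ to $[1{:}1{:}0]$ and $\{y=0\}$ to $[0{:}0{:}1]$. Composing with $h$, one gets $f(\{z=0\})=h([1{:}1{:}0])=[1{:}1{:}0]$ and $f(\{y=0\})=h([0{:}0{:}1])=[0{:}-2{:}3]$.

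For (b), a one-line substitution gives $f([1{:}1{:}0])=[1{:}1{:}0]$, so the first exceptional curve produces a fixed point lying outside $I(f)$. To handle the orbit of $[0{:}-2{:}3]$, observe that the line $L=\{x=0\}$ is $f$-invariant, since the first coordinate $xy$ vanishes on $L$; using the affine coordinate $t=y/z$ on $L$, the restriction $f|_L$ becomes the M\"obius transformation $t\mapsto -2/(t+3)$. The only points of $L$ that $f$ can send into $I(f)$ are $t=\infty$ (which is $[0{:}1{:}0]\in I(f)$ itself) and $t=-3$ (whose $f$-image equals $[0{:}1{:}0]$). A straightforward induction from $t_0=-2/3$ yields $t_n=-(2^{n+2}-2)/(2^{n+2}-1)$, which lies in $\mathbb{Q}\cap(-1,0)$ and hence avoids both $\infty$ and $-3$. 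Thus the orbit of $[0{:}-2{:}3]$ never meets $I(f)$; together with step (a) this verifies criterion (ii) and proves that $f$ is algebraically stable. The only mildly delicate step is the orbit analysis, which becomes transparent once one recognizes $f|_L$ as a M\"obius transformation with an explicit closed-form orbit.
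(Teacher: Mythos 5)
Your proof is correct and follows essentially the same route as the paper: identify $I(f)=\{[1{:}0{:}0],[0{:}1{:}0]\}$ and the two contracted lines with images $[1{:}1{:}0]$ (a fixed point) and $[0{:}-2{:}3]$, then track the latter orbit inside the invariant line $\{x=0\}$ and check it never hits $[0{:}1{:}0]$. The only cosmetic difference is that the paper linearizes the restriction $f|_{\{x=0\}}$ to $y\mapsto 2y$ by a coordinate change (so the orbit is $2^{l+2}\neq 1$), whereas you give the closed-form M\"obius orbit $t_n=-(2^{n+2}-2)/(2^{n+2}-1)$; both are valid in characteristic $0$.
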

\proof We see that $f^{-1}=[2x^2-2xy,(-3x+3y+2z)^2,(x-y)(-3x+3y+2z)]$, $I(f)=\{[1,0,0],[0,1,0]\}$ and $I(f^{-1})=\{[1,1,0],[0,-2,3]\}$. Let $C$ be the curve defined by the equation $x=0$. Then $C$ is $f$-invariant. We see that $f$ induces an automorphism $f_{|C}$ on $C$ sending $[0,y,z]$ to $[0,-2z,y+3z]$. We compute the orbits of the points in $I(f^{-1})$. Since $[1,1,0]$ is a fixed point of $f$. The orbit of $[1,1,0]$ is $\{[1,1,0]\}$ which does not meet $I(f^{-1})$. Since $C$ is $f$-invariant, the orbit of $[0,-2,3]$ is contained in $C$. Let $i$ be an automorphism of $C$ sending $[0,y,z]$ to $[0,y-2z,-y+z]$, then $i^{-1}\circ f_{|C}\circ i$ sends $[0,y,z]$ to $[0,2y,z]$, $i^{-1}([0,1,0])=[0,1,1]$ and $i^{-1}([0,-2,3])=[0,4,1]$. So the orbit of $[0,-2,3]$ is $i(\{[0,2^{l+2},1]|l=0,1,2\cdots \})$ which does not meet $I(f)$. Then $f$ is algebraically stable.
\endproof

Since $f$ is algebraically stable, $\lambda_1(f)=\deg(f)=2$. The following proposition shows that for any prime $p>2$, $f_p$ is a birational map of $\mathbb{P}^2(\overline{\mathbb{F}_p})$ and $\lambda_1(f)<2$, observe that $f_p$ is not dominant.

\begin{pro}
For any prime $p>2$, $f_p$ is a birational map of $\mathbb{P}^2(\overline{\mathbb{F}_p})$ and $\lambda_1(f)<2$.
\end{pro}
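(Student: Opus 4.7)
The plan is to show that for every prime $p > 2$ the map $f_p$ fails to be algebraically stable, and then to deduce $\lambda_1(f_p) < 2$ from the resulting strict degree drop. That $f_p$ and $f_p^{-1}$ remain birational of degree $2$ is immediate: the coefficients appearing in $f = [xy, xy - 2z^2, yz + 3z^2]$ and in the explicit formula for $f^{-1}$ lie in $\{1, \pm 2, \pm 3\}$, all units modulo $p > 2$, so none of the defining polynomials collapse in degree. In particular $I(f_p) = \{[1{:}0{:}0],\,[0{:}1{:}0]\}$ and $I(f_p^{-1})$ contains $[0{:}-2{:}3]$ (which coincides with $[0{:}1{:}0]$ in the degenerate case $p = 3$).

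The central step is to reuse the conjugation argument from Proposition~\ref{fAS}. The line $C = \{x = 0\}$ is still $f_p$-invariant, and after conjugating by the automorphism $i$ of $C$, the restriction $f_p|_C$ becomes the doubling map $[0{:}y{:}z] \mapsto [0{:}2y{:}z]$, with $i^{-1}([0{:}1{:}0]) = [0{:}1{:}1]$ and $i^{-1}([0{:}-2{:}3]) = [0{:}4{:}1]$. Consequently the forward orbit of $[0{:}-2{:}3]$ in these coordinates is $\{[0{:}2^{l+2}{:}1] : l \geq 0\}$. Because $p > 2$, the element $2 \in \mathbb{F}_p^*$ has finite multiplicative order $d \geq 2$, so taking $l = d - 2$ yields $2^{l+2} \equiv 1 \pmod p$, i.e.\ $f_p^{\,d-2}([0{:}-2{:}3]) = [0{:}1{:}0] \in I(f_p)$. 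This purely arithmetic collision—invisible in characteristic zero, where $2$ has infinite order—is the entire source of the non-stability.

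Since $[0{:}-2{:}3] \in I(f_p^{-1})$, there is a curve $V \subseteq \mathcal{E}(f_p)$ contracted by $f_p$ onto $[0{:}-2{:}3]$; combining with the previous step gives $f_p^{\,d-1}(V) = [0{:}1{:}0] \subseteq I(f_p)$, which directly violates the algebraic stability criterion recalled in the excerpt. For a degree-$2$ birational self-map of $\mathbb{P}^2$, such a collision forces a common factor (the defining equation of $V$) in the three polynomials representing an appropriate iterate $f_p^N$, whence $\deg(f_p^N) < 2^N$. Using sub-multiplicativity $\deg(f_p^{m+n}) \leq \deg(f_p^m)\deg(f_p^n)$ together with Fekete's lemma, we conclude
\[
\lambda_1(f_p) \;=\; \inf_n \deg(f_p^n)^{1/n} \;\leq\; \deg(f_p^N)^{1/N} \;<\; 2.
\]

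The only non-routine ingredient is the second paragraph: identifying the semi-conjugation to the doubling map and observing that its finite order modulo $p$ is exactly what forces the forward orbit of $[0{:}-2{:}3]$ to collide with $I(f_p)$. The remaining steps—checking that the reductions of $f$ and $f^{-1}$ stay honestly birational, producing the curve $V$ from the indeterminacy of $f_p^{-1}$, and translating non-algebraic stability into a strict inequality $\lambda_1 < \deg$ on $\mathbb{P}^2$—are standard bookkeeping, and the uniform formula $l = d - 2$ also disposes of the mildly degenerate case $p = 3$.
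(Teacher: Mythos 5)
Your proposal is correct and follows essentially the same route as the paper: verify that the reduction stays birational via the explicit inverse, conjugate the restriction to the invariant line $x=0$ to the doubling map $y\mapsto 2y$, use the finite order of $2$ in $\mathbb{F}_p^{\times}$ to make the orbit of $[0{:}-2{:}3]$ hit $I(f_p)$, and convert the resulting failure of algebraic stability into $\deg(f_p^{N})<2^{N}$ and hence $\lambda_1(f_p)<2$. (One pedantic note: for $p=3$ the coefficient $3$ is not a unit, so your ``all units'' justification is off there, but the defining polynomials still do not drop in degree and you handle the $p=3$ degeneracy separately, so nothing is lost.)
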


\begin{proof}
Recall that $f_p=[xy,xy-2z^2,yz+3z^2]$. Let $j=[2x^2-2xy,(-3x+3y+2z)^2,(x-y)(-3x+3y+2z)]$. Then $j\circ f_p=[4xyz^2,4y^2z^2,4yz^3]=[x,y,z]$. So $f_p$ is birational, $f_p^{-1}=j$ and $\deg(f)=2$. Thus $I(f_p)=\{[1,0,0],[0,1,0]\}$ and $I(f_p^{-1})=\{[1,1,0],[0,-2,3]\}$. Let $C$ be the curve defined by the equation $x=0$. Then $C$ is $f_p$-invariant. Since $C$ is $f_p$-invariant, the orbit of $[0,-2,3]\}$ is contained in $C$. Let $i$ be an automorphism of $C$ sending $[0,y,z]$ to $[0,y-2z,-y+z]$, then $l=i^{-1}\circ f_{|C}\circ i$ sends $[0,y,z]$ to $[0,2y,z]$, $i^{-1}([0,1,0])=[0,1,1]$ and $i^{-1}([0,-2,3])=[0,4,1]$. Since $p>2$, we have $2^{p-1}=1$. Let $n_p\geq 2$ be the order of $2$ in the multiplicative group $\mathbb{F}_p^{\times}$. So $l^{n_p-2}([0,4,1])=[0,1,1]$. So $f^{n_p-2}([0,-2,3])=[0,1,0]$$\in I(f_p)$, and $f_p$ is not algebraically stable. Then there is a number $n$ such that $\deg(f_p^n)<\deg(f_p)^n=2^n$ (in fact the least number having this property is $n_p-1$), and $\lambda_1(f_p)\leq \deg(f_p^n)^{1/n}<2$ by Corollary \ref{kappa}.
\end{proof}

In fact, we can
compute $\lambda_1(f_p)$ by constructing an
algebraically stable model. For $p>2$, $\lambda_1(f)$ is the greatest real root of the polynomial $$x_p^{n_p}-2x_p^{n_p-1}+1=0.$$ Since $(x_p-2)x_p^{n_p-1}+1=0$, then $x_p<2$. Define $F_n(x)=(x-2)x^{n-1}+1$. When $n>2$, $F_n(3/2)<0,$ and $F_n(2)=1>0.$ So the
largest root $x$ of $F_n(x)=0$ satisfies $2>x> 3/2.$ Then $2^{n_p-1}(x_p-2)+1<0=(x_p-2)x_p^{n_p-1}+1<(x_p-2)(3/2)^{n_p-1}+1,$ and we get
$2-(1/2)^{n_p-1}>x_p>2-(2/3)^{n_p-1}.$

\section{The case $\lambda_1>1$}\label{case1}

The purpose of this section is to prove Theorem \ref{main1}.

\subsection{The case of finite fields}
At first, we recall the following theorem of Hrushovski.

\begin{thm}(\cite{hu7})\label{hrushovski}
Let $V$ be an irreducible affine variety of degree $d$, dimension $r$
over an algebraically closed field of characteristic $p$, $q$ a
power of $p$, $S$ $\subseteq V\times V$ an irreducible subvariety of
degree $\delta$ of dimension $r$ such that both projections are
dominant and the second one is quasi-finite. Let $\Phi_q \subseteq
V\times V$ be the graph of the $q$-Frobenius map. Set
$u=\frac{\deg\pi_1}{\deg_{ins}\pi_2}$.Then there is a constant
$C(d,\delta,r)$ that only dependents on $d,\delta,r$, such that if
$q>C(d,\delta,r)$, then
$$|\#(S \bigcap \Phi_q )-uq^r|\leq C(d,\delta,r)q^{r-1/2}.$$

\end{thm}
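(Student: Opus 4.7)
The plan is to reinterpret $\#(S \cap \Phi_q)$ as a geometric point count on an auxiliary $r$-dimensional scheme and then apply Lang--Weil. First rewrite: $\#(S \cap \Phi_q)$ equals the number of $s \in S(\overline{\F_q})$ satisfying $\pi_2(s) = \sigma_q(\pi_1(s))$, where $\sigma_q \colon V \to V$ is the $q$-th power Frobenius. To turn the Frobenius into something Lang--Weil can digest, I would form the Frobenius twist $\widetilde S := S \times_{V,\sigma_q} V$ along $\pi_1$. This is again $r$-dimensional, and its geometric points parametrize pairs $(s, t) \in S \times V$ with $\pi_1(s) = \sigma_q(t)$. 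The additional locus $\{\pi_2(s) = t\}$ cut out on $\widetilde S$ is exactly $S \cap \Phi_q$, up to multiplicities absorbed into the factor $\deg_{ins}(\pi_2)$ that collapses purely inseparable fibers.

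Second, I would bound the projective complexity of $\widetilde S$. Although naive pullback along $\sigma_q$ raises equation degrees to the $q$-th power, the \emph{intrinsic} degree of $\widetilde S$ in a suitable projective embedding of $V \times V$ is governed by the Hilbert polynomials of $S$ and $V$, so it is bounded by some $C(d,\delta,r)$ independent of $q$. The same is true for the number and individual degrees of the absolutely irreducible components of $\widetilde S$, by standard Bezout--Bertini type arguments.

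Third, apply Lang--Weil componentwise: for any geometrically irreducible $r$-dimensional variety $W$ of degree $D$ over $\F_q$,
$$\bigl|\#W(\F_q) - q^r\bigr| \leq (D-1)(D-2)\,q^{r-1/2} + C'(D,r)\,q^{r-1}.$$
Summing over the absolutely irreducible $\F_q$-components of $\widetilde S$ (a standard Galois averaging handles the components only defined over $\overline{\F_q}$), one obtains $\#\widetilde S(\overline{\F_q}) = N q^r + O(q^{r-1/2})$ with $N$ a combinatorial integer controlled by $d,\delta,r$. Tracing through the construction, $\pi_1$ contributes $\deg(\pi_1)$ generic preimages while the purely inseparable part of $\pi_2$ identifies $\deg_{ins}(\pi_2)$-tuples, so $Nq^r$ transforms into the claimed main term $uq^r$ for $\#(S \cap \Phi_q)$.

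The main obstacle will be the uniform degree bookkeeping: proving rigorously that the intrinsic degree and component count of $\widetilde S$ really are bounded only in terms of $d,\delta,r$ (despite the $q$-twist), and that the inseparability accounting produces exactly the ratio $u = \deg(\pi_1)/\deg_{ins}(\pi_2)$ rather than some nearby combination of degrees. A cleaner alternative, avoiding explicit component counting, is to apply a Lefschetz trace formula to $S$ equipped with the Frobenius-twisted correspondence and invoke Deligne's Weil~II: the weight bound $|\alpha| \leq q^{i/2}$ on Frobenius eigenvalues on $H^i_c$ directly yields the $O(q^{r-1/2})$ error, and the contribution of $H^{2r}_c$ produces the main term $uq^r$ after identifying the cycle class of $\Phi_q$ in $S \times V \to V \times V$.
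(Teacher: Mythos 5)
The paper offers no proof of this statement to compare against: it is quoted directly from Hrushovski's work on the twisted Lang--Weil estimate. Judged on its own, your main route contains a fatal gap. The key claim in your second step --- that the Frobenius twist $\widetilde S = S\times_{V,\sigma_q}V$ has degree bounded by a constant $C(d,\delta,r)$ independent of $q$ --- is false, and this is precisely the obstruction that makes the theorem hard. Already for $V=\A^1$ and $S$ the diagonal, $\widetilde S$ is the curve $\{y=t^q\}$ in $\A^2$, of degree $q$. In general $\widetilde S$ is the preimage of $S$ under a purely inseparable map of degree $q^r$, so its degree grows with $q$; feeding a degree-$O(q)$ variety into Lang--Weil yields an error term of order $q^{r+3/2}$, which swallows the main term $uq^r$. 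There is also a conflation between counting $\F_q$-rational points of the $r$-dimensional variety $\widetilde S$ and counting the $0$-dimensional set $S\cap\Phi_q$: these can be matched (by descent along the twisted Frobenius) only when $S$ is the graph of an automorphism, which is the classical Lang-torsor situation. For a general correspondence with $\pi_1$ merely dominant and $\pi_2$ merely quasi-finite, no such descent datum exists --- that is exactly the new content of Hrushovski's theorem, and no amount of ``uniform degree bookkeeping'' will rescue the reduction.

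Your ``cleaner alternative'' (Lefschetz--Verdier plus Weil II) is the right strategy in spirit --- it is essentially the Fujiwara / Shuddhodan--Varshavsky approach --- but as written it assumes the hardest steps. Since $V$ is affine one needs a trace formula with compact supports on an open variety, which is valid only after composing the correspondence with a high enough power of Frobenius so that it becomes contracting near the boundary; this is Fujiwara's theorem (formerly Deligne's conjecture) and is exactly where the hypothesis $q>C(d,\delta,r)$ enters. One must then identify the local terms of the trace formula with the naive point count, and compute the trace on $H^{2r}_c$ to extract the main term $u\,q^r$ with $u=\deg\pi_1/\deg_{ins}\pi_2$. None of these steps is routine, so the sketch does not constitute a proof; the honest course --- and the one the paper takes --- is to cite Hrushovski (or Fujiwara--Varshavsky) as a black box.
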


Building on [Proposition 5.5, \cite{fa}], we show that the periodic points of birational maps are Zariski dense over a finite field. In \cite{fa}, $\phi$ is assumed to be regular, but in our case $\phi$ is just birational. So the proofs are little different. We denote by $\mathbb{F}$ the algebraic closure of a
finite field.

\begin{pro}\label{Fakhruddin 5.5}
Let $X$ be an algebraic variety over $\mathbb{F}$ and $\phi:X\dashrightarrow X$ be a birational map. Then the
subset of $X(\mathbb{F})$ consisting of non-critical periodic points
of $\phi$ is Zariski dense in $X$.

\end{pro}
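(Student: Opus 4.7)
The plan is to argue by contradiction, applying Hrushovski's counting theorem (Theorem \ref{hrushovski}) to the graph of $\phi$ intersected with the graph of a large Frobenius power, adapting Fakhruddin's argument to accommodate that $\phi$ is only birational.

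First I would reduce to the case where $X$ is irreducible and where $\phi$ and $X$ are both defined over a finite field $\mathbb{F}_q$; this is automatic since both objects are of finite type, up to replacing $\phi$ by an iterate to preserve irreducible components. Suppose, for contradiction, that the non-critical periodic set $\mathcal{P}$ has proper Zariski closure $W \subsetneq X$. Since $\mathcal{P}$ is Galois-stable, enlarging $q$ we may assume $W$ is defined over $\mathbb{F}_q$. Let $B := X_{\mathrm{sing}} \cup I(\phi) \cup I(\phi^{-1}) \cup \mathcal{E}(\phi) \cup \mathcal{E}(\phi^{-1})$, another proper closed $\mathbb{F}_q$-subset, and pick an $\mathbb{F}_q$-defined affine open dense subset $V \subset X \setminus (W \cup B)$, refined so that $V \cap \phi^{-1}(V) \cap \phi(V)$ is dense in $V$. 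Then $\phi$ restricts to a regular isomorphism from a dense open subset of $V$ onto a dense open subset of $V$.

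Second, I would apply Theorem \ref{hrushovski} to $S := \Gamma_\phi \cap (V \times V)$, which is a non-empty open subset of the irreducible graph $\Gamma_\phi$ and hence irreducible of dimension $r := \dim X$. Both projections $\pi_i : S \to V$ are birational because $\phi$ is birational, so both are dominant and quasi-finite with $\deg \pi_1 = \deg_{\mathrm{ins}} \pi_2 = 1$, giving $u = 1$. For $q^n$ sufficiently large, the theorem provides
$$\#(S \cap \Phi_{q^n}) \geq q^{nr} - C\, q^{n(r - 1/2)} > 0,$$
so there exists $x \in V$ with $\phi(x) = \mathrm{Frob}_{q^n}(x)$.

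Finally, I would promote $x$ to a non-critical periodic point of $\phi$. Since $\phi$ is defined over $\mathbb{F}_q$ it commutes with $\mathrm{Frob}_{q^n}$ wherever both are defined, and an induction gives $\phi^m(x) = \mathrm{Frob}_{q^{nm}}(x)$ whenever the intermediate iterates make sense. The Galois conjugates $\mathrm{Frob}_{q^{nj}}(x)$ all lie in the $\mathbb{F}_q$-stable set $V$, so they avoid $B$ and in particular stay clear of $I(\phi) \cup \mathcal{E}(\phi)$; hence the forward $\phi$-orbit of $x$ is well-defined, contained in $V$, and disjoint from all indeterminacy and critical loci. Since $x \in X(\mathbb{F}_{q^{nN}})$ for some $N$, we conclude $\phi^N(x) = \mathrm{Frob}_{q^{nN}}(x) = x$, so $x$ is a non-critical periodic point sitting in $V \subset X \setminus W$, contradicting $\mathcal{P} \subset W$. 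The main subtlety is this last orbit-control step: Frobenius-stability of $V$ is precisely what turns the single equation $\phi(x) = \mathrm{Frob}_{q^n}(x)$ into genuine periodicity together with the non-criticality of the whole orbit.
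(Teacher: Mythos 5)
Your proposal is correct and follows essentially the same route as the paper: argue by contradiction, restrict to an $\mathbb{F}_q$-defined affine open set avoiding the closure of the periodic points and the bad locus of $\phi$, apply Hrushovski's theorem to $\Gamma_\phi\cap(V\times V)$ to produce $x$ with $\phi(x)=\mathrm{Frob}_{q^n}(x)$, and use the commutation of $\phi$ with Frobenius to upgrade this to a non-critical periodic point. Your treatment of the orbit-control step (Frobenius-stability of $V$ guaranteeing that every iterate is defined and non-critical) is in fact slightly more explicit than the paper's, but it is the same argument.
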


\begin{proof}
Let $Y$ be the Zariski closure of non-critical periodic points of
$\phi$ in $X(\mathbb{F})$ and suppose $Y\neq X$. Let $q=p^n$,
$p=char(\mathbb{F})$, be such that $X$ as well as $\phi$ are defined
over the subfield $\mathbb{F}_q$ of $\mathbb{F}$ consisting of $q$
elements. Let $\sigma$ denote the Frobenius morphism of $X$ and let
$\Gamma_{\phi}$(resp. $\Gamma_m$) denote the graph of $\phi$ (resp.
$\sigma^m$) in $X\times X$. Let $U$ be an irreducible affine open
subset of $X-Y$ also defined over $\mathbb{F}_q$ such that $\phi$ is
an open embedding from $U$ to $X$ and let
$V=\Gamma_{\phi}\bigcap(U\times U)$. By Theorem \ref{hrushovski}
there exists $m>0$ such that
(V$\bigcap$$\Gamma_m$)($\mathbb{F}$)$\neq$$\emptyset$ i.e. there
exists $u\in U(\mathbb{F})$ such that $\phi(u)=\sigma^m(u)\in U$.
Since $\phi$ is defined over $\mathbb{F}_q$, it follows that $\phi^l(u)=\sigma^{lm}(u)\in U$. So
$\phi(u)$ is a non-critical periodic point of $\phi$. This
contradicts the definition of $Y$ and $U$, so the proof completes.

\end{proof}
For the convenience of the reader, we repeat the arguments of [Theorem 5.1, \cite{fa}] which allows us to lift any isolated periodic point from the special fiber to the generic fiber.
\begin{lem}\label{dvr}
Let $\mathbf{X}$ be a projective scheme, flat over a discrete valuation ring $R$ which
has fraction field $K$ and residue field $k$, $\Phi$  be a birational
map $\mathbf{X}\dashrightarrow \mathbf{X}$ over $R$ which is well
defined at least at a point on the special fiber. Let $X$ be the
special fiber of $\mathbf{X}$ and $X'$ be the generic fiber of
$\mathbf{X}$ , $\phi$ be the restriction of $\Phi$ to $X$, $\phi'$ be the
restriction of $\Phi$ to $X'$. If the set consisting of periodic
$\overline{k}$-points of $\phi$ is Zariski dense in $X$, and there
are only finitely many curves of periodic points, then the set consisting
of periodic $\overline{K}$-points of $\phi'$ is Zariski dense in the
generic fiber of $X'$.

\end{lem}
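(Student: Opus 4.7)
The plan is to lift isolated periodic points from the special fiber one at a time via a dimension count in $\mathbf{X}\times_R\mathbf{X}$, and then to transfer Zariski density from the special fiber to the generic fiber through a flat-closure argument in the total space.

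First, since there are only finitely many curves of periodic points on $X$, the subset $\mathcal{P}_0\subset X$ consisting of those periodic points of $\phi$ that lie on none of these curves is still Zariski dense in $X$. For each $p\in\mathcal{P}_0$, say of period $n$, the pair $(p,p)$ is then an isolated point of the special-fiber intersection $\Gamma_{\phi^n}\cap\Delta_X$.

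Second, I would carry out the lifting step. Let $d$ denote the relative dimension of $\mathbf{X}/R$, and let $\Gamma_{\Phi^n}\subset \mathbf{X}\times_R\mathbf{X}$ be the scheme-theoretic closure of the graph of $\Phi^n$ on its domain of definition. Both $\Gamma_{\Phi^n}$ and the diagonal $\Delta_{\mathbf{X}}$ have dimension $d+1$ inside the $(2d+1)$-dimensional scheme $\mathbf{X}\times_R\mathbf{X}$, and $\Delta_{\mathbf{X}}$ is locally cut out by $d$ equations. By Krull's Hauptidealsatz, every irreducible component of $\Gamma_{\Phi^n}\cap\Delta_{\mathbf{X}}$ has dimension at least one. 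Since $(p,p)$ is isolated on the special fiber, the component $Z_p$ through $(p,p)$ cannot be confined to the special fiber and must dominate $\Spec R$. Identifying $\Delta_{\mathbf{X}}\cong\mathbf{X}$, the curve $Z_p\subset\mathbf{X}$ joins $p$ to a generic-fiber point $\tilde p\in X'(\overline{K})$ that satisfies $(\phi')^n(\tilde p)=\tilde p$ and specializes to $p$.

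Third, I would deduce density on the generic fiber. Set $\mathcal{P}'=\{\tilde p:p\in\mathcal{P}_0\}\subset X'(\overline K)$ and let $\mathbf{Y}\subset\mathbf{X}$ be the scheme-theoretic closure of $\mathcal{P}'$. Since $R$ is a DVR, $\mathbf{Y}$ is flat over $R$, so every irreducible component of $\mathbf{Y}$ dominates $\Spec R$. By construction each curve $Z_p$ lies in $\mathbf{Y}$, so $\mathbf{Y}_s$ contains $\mathcal{P}_0$ and hence all of $X=\mathbf{X}_s$. A component-by-component dimension comparison on $\mathbf{X}=\bigcup\mathbf{X}_i$, combined with flatness of $\mathbf{Y}$ (which rules out vertical components), then forces $\mathbf{Y}\cap\mathbf{X}_i=\mathbf{X}_i$ for every $i$; therefore $\mathbf{Y}=\mathbf{X}$ and in particular $\mathbf{Y}_\eta=X'$, so $\mathcal{P}'$ is Zariski dense in the generic fiber.

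I expect the lifting step to be the main obstacle: one must rule out the possibility that the component of $\Gamma_{\Phi^n}\cap\Delta_{\mathbf{X}}$ through $(p,p)$ is supported entirely on the special fiber. The Hauptidealsatz supplies the uniform lower bound of one on the dimension of such components, and it is precisely the hypothesis of only finitely many curves of periodic points that, via the isolation of $(p,p)$ on the special fiber, forces this component to be transverse to $\Spec R$ and hence produce a genuine $\overline{K}$-lift.
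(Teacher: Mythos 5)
Your proof is correct and follows essentially the same route as the paper's: both lift each periodic point $p$ lying off the finitely many periodic curves by observing that the component of $\Delta_{\mathbf{X}}\cap\Gamma_{\Phi^n}$ through $(p,p)$ has dimension at least one and, by isolatedness of $(p,p)$ in the special fibre, must dominate $\Spec R$, so its generic point is a periodic point of $\phi'$ specializing to $p$. The only cosmetic differences are that you make the Hauptidealsatz count explicit — note this requires working at smooth points of $\mathbf{X}$, which the paper arranges by further restricting to periodic points that are smooth on $X$ — and that you phrase the final density step via the flat closure of the lifted points rather than the paper's dual check against an arbitrary nonempty open subset of $X'$.
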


\begin{proof}
 The set of periodic $\overline{k}$-points of $\phi$ of period dividing
$n$ can be viewed as the set of $\overline{k}$-points in
$\Delta_X\bigcap\Gamma_{\phi^n}$, where $\Delta_X$ is the diagonal
and $\Gamma_{\phi^n}$ is the graph of $\phi^n$ in $X\times X$.

The set of non-critical periodic points of $\phi$ is Zariski dense
in $X$, thus the set of non-critical periodic points of $\phi$ not
in any curve of periodic points which are smooth points on $X$ is
Zariski dense. For any positive integer $n$, consider the subscheme
$\Delta_{\mathbf{X}}\bigcap\Gamma_{\Phi^n}$ of
$\mathbf{X}\times_R\mathbf{X}$, where $\Delta_{\mathbf{X}}$ is the
diagonal and $\Gamma_{\Phi^n}$ is the graph of $\Phi^n$ in
$\mathbf{X}\times_R \mathbf{X}$. If $x\in X$ is a periodic point of
$\phi$ not in any curve of periodic points which is also a smooth
point on $X$, then $(x,x)$ is contained in a closed subscheme of
$\Delta_{\mathbf{X}}\bigcap\Gamma_{\Phi^n}$ of dimension $1$. Since
$x$ is not in any curve of periodic points, the generic point $x'$
of this subscheme is in $\Delta_{X'}\bigcap\Gamma_{\phi'^n}$ the
generic fiber of $\Delta_{\mathbf{X}}\bigcap\Gamma_{\Phi^n}$. So
$x'$ is a periodic point. Since $x$ is non critical, $\Phi^k$ is a
locally isomorphism on a neighborhood $\mathbf{U}^k$ of $x$ in
$\mathbf{X}$. Since $x'$ is the generic point, $x'$ is also in all
of this $\mathbf{U}^k$. So it is non-critical.

We identify $\mathbf{X}$ with $\Delta_{\mathbf{X}}$. For an open
subset $U'$ of $X'$, let $Z'$ be a Weil-divisor of $X'$ containing
$X'-U'$. let $\mathbf{Z}$ be the closure of $Z'$ in $\mathbf{X}$,
then codim$\mathbf{Z}=1$ and each component of $\mathbf{Z}$ meets
$Z'$.  Each component of $X$ is of codimension 1. If
$X\subseteq\mathbf{Z}$, each component of $X$ is a component of
$\mathbf{Z}$. But $X'\bigcap X=\emptyset$, so
$X\nsubseteq\mathbf{Z}$. Let $\mathbf{U}=\mathbf{X}-\mathbf{Z}$ and
$U=\mathbf{U}\bigcap X$, then $\mathbf{U}\bigcap X'=U'$ and
$U\neq\emptyset$. There is a periodic point $x$ of $\phi$ not in any
curve of periodic points which is a smooth point in $U$, then $x'$
is in $U'$ and is a non-critical periodic point of $\phi'$.
\end{proof}

\subsection{Invariant curves}\label{InvariantC}

From the previous subsection, we see that curves of periodic points are the main obstructions to lift periodic points from finite field.  The following theorem of Cantat \cite{Invarianthypersurfaces} tells us that if $\lambda_1>1$ on the special fiber, then this obstruction can be removed.

\begin{thm}(\cite{Invarianthypersurfaces})\label{finite}
Let $X$ be a projective smooth surface. Then any birational map $f:X\rightarrow X$ with $\lambda_1(f)>1$ admits only finitely many periodic curves.

In particular, there are only finitely many curves of periodic points.
\end{thm}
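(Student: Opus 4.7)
The plan is to pass to an algebraically stable model, extract Perron--Frobenius eigenclasses for $f^*$ and $f_*$ that span a two-dimensional signature $(1,1)$ subspace of $N^1$, show that every periodic curve class is orthogonal to this plane, and then conclude finiteness via the Hodge index theorem together with a classical Euclidean linear algebra bound.

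First, by Theorem \ref{AS modele}, I may replace $X$ by a proper modification and assume $f$ is already algebraically stable; this is harmless since the two sets of periodic curves differ only by the finitely many exceptional components introduced by the blow-up. A standard Perron--Frobenius argument on the closed convex $f^*$-invariant nef cone of $N^1(X)_{\mathbb{R}}$ produces a nonzero nef class $\theta^+$ with $f^* \theta^+ = \lambda_1 \theta^+$. Viewed inside $\mathbb{L}^2(\mathfrak{X})$, algebraic stability ensures that this $N^1$-pullback agrees with the $\mathbb{L}^2$-pullback, which is an isometry, so $\lambda_1^2 (\theta^+)^2 = (f^* \theta^+)^2 = (\theta^+)^2$ forces $(\theta^+)^2 = 0$. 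The same construction applied to $f^{-1}$ yields a nef class $\theta^-$ with $f_* \theta^- = \lambda_1 \theta^-$ and $(\theta^-)^2 = 0$. Intersecting $\lambda_1^{-n}(f^n)^* L$ for an ample class $L$ (which converges to a positive multiple of $\theta^+$) with $\theta^-$, the identity $((f^n)^* L \cdot \theta^-) = \lambda_1^n (L \cdot \theta^-) > 0$ gives $(\theta^+ \cdot \theta^-) > 0$. Consequently $\theta^+$ and $\theta^-$ span a 2-dimensional subspace of $N^1(X)_{\mathbb{R}}$ on which the intersection form has signature $(1,1)$.

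Now let $C$ be an irreducible $f$-periodic curve with $f^n(C) = C$; then $(f^n)_* [C] = [C]$ in $N^1(X)_{\mathbb{R}}$. Algebraic stability and the adjointness of pullback and pushforward give
$$([C] \cdot \theta^+) = ((f^n)_* [C] \cdot \theta^+) = ([C] \cdot (f^n)^* \theta^+) = \lambda_1^n \, ([C] \cdot \theta^+),$$
forcing $([C] \cdot \theta^+) = 0$ since $\lambda_1^n > 1$. The symmetric calculation yields $([C] \cdot \theta^-) = 0$. Set $\theta := \theta^+ + \theta^-$, so that $(\theta^2) = 2(\theta^+ \cdot \theta^-) > 0$ while $([C] \cdot \theta) = 0$ and $[C] \nsim_{\mathrm{num}} 0$ (as $C$ is a nonzero effective divisor). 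The Hodge index theorem (Theorem \ref{Hodge}) then yields $([C]^2) \leq 0$, with equality forcing $(\theta^2) = 0$; hence $([C]^2) < 0$ strictly.

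To conclude, all classes $[C_i]$ of periodic irreducible curves lie in the orthogonal complement $W$ of the 2-dimensional plane $\langle \theta^+, \theta^-\rangle$ inside $N^1(X)_{\mathbb{R}}$. Since $N^1(X)_{\mathbb{R}}$ has signature $(1, \rho(X)-1)$ and $\langle \theta^+, \theta^-\rangle$ has signature $(1,1)$, the restriction of the intersection form to $W$ is negative definite of dimension $d := \rho(X) - 2$. Distinct irreducible curves satisfy $([C_i] \cdot [C_j]) \geq 0$ for $i \neq j$ together with $([C_i]^2) < 0$, so after passing to the positive definite form $-(\cdot,\cdot)$ on $W$ the classes become nonzero vectors in $\mathbb{R}^d$ with pairwise non-positive inner products. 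The classical fact that such a family in $\mathbb{R}^d$ has cardinality at most $2d$ then provides a finite bound on the number of periodic irreducible curves. The main subtleties lie in arranging the Perron--Frobenius eigenclasses $\theta^\pm$ to sit inside $N^1$ (which is precisely what algebraic stability provides) and in establishing the strict negativity $([C]^2) < 0$ through the joint use of both eigenvectors; once these are in place, the finiteness reduces to the elementary Euclidean bound.
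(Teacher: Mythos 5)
Your strategy collapses at its first substantive step: the claim that the nef eigenclass $\theta^+\in N^1(X)_{\mathbb{R}}$ of an algebraically stable birational map satisfies $(\theta^+)^2=0$ is false in general, and so is the assertion that ``algebraic stability ensures that this $N^1$-pullback agrees with the $\mathbb{L}^2$-pullback.'' Algebraic stability gives functoriality $(f^n)^*=(f^*)^n$ on $N^1(X)$; it does \emph{not} identify the $N^1$-pullback with the $\mathbb{L}^2(\mathfrak{X})$-pullback. For an algebraically stable quadratic map of $\mathbb{P}^2$ the $\mathbb{L}^2$-pullback of $H$ is a Cartier class of the form $2H-E_1-E_2-E_3$ determined on a blowup, with self-intersection $1=(H^2)$ exactly as the isometry property demands, whereas the $N^1$-pullback $2H$ has square $4$; the isometry argument therefore says nothing about $(\theta^+)^2$. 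More structurally, on $\mathbb{P}^2$ one has $N^1=\mathbb{Z}H$, so $\theta^+=\theta^-=H$ with $(\theta^+)^2=1>0$, there is no signature-$(1,1)$ plane, and the only class orthogonal to $H$ is zero --- yet the paper's own example $f=[xy,xy-2z^2,yz+3z^2]$ of Section \ref{example} is algebraically stable with $\lambda_1=2$ and has the invariant curve $\{x=0\}$. A second error compounds this: for a birational map one does not have $(f^n)_*[C]=[C]$ for a periodic curve; the pushforward $\pi_{2*}\pi_1^*[C]$ acquires effective contributions from exceptional components (on $\mathbb{P}^2$, $f_*[C]=\deg(f)\deg(C)\,H\neq[C]$). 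What you have written is essentially the classical argument for \emph{automorphisms}, where $f^*$ genuinely preserves the intersection form on $N^1$, so that $(\theta^{\pm})^2=0$ and $(f^n)_*[C]=[C]$ do hold and the negative-definiteness count goes through; but a birational map with $\lambda_1>1$ on a rational surface need not be conjugate to an automorphism, so that case cannot be reduced away. Retreating to $\mathbb{L}^2(\mathfrak{X})$, where the eigenclasses really are isotropic, does not rescue the final step either, since the orthogonal complement there is infinite-dimensional and the Euclidean bound ``at most $2d$ vectors with pairwise nonpositive inner products in $\mathbb{R}^d$'' gives nothing.

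For comparison, the paper's proof is entirely different and does not involve eigenclasses at all: it invokes Cantat's invariant-hypersurface theorem (Theorem \ref{invarianthypersurface}), which asserts that a birational map with infinitely many periodic curves preserves a nonconstant rational function and hence, after a proper modification, a fibration over $\mathbb{P}^1$; Proposition \ref{lamda1=1} then shows that preserving a fibration forces $\lambda_1=1$, contradicting the hypothesis $\lambda_1(f)>1$.
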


\proof
We first recall the following theorem of Cantat.
\begin{thm}(\cite{Invarianthypersurfaces})\label{invarianthypersurface}
Let $X$ be a smooth  projective variety, and $f: X\dashrightarrow X$ be a birational
map. If there are infinitely many periodic hypersurfaces of $f$,
then $f$ preserves a non constant rational function (i.e. there is a
rational function $\Phi$ and a non zero constant $\alpha$ such that
$\Phi\circ f=\alpha\Phi$ ).

Moreover there is a proper modification $\pi:\widehat{X}\rightarrow X$ lifting $\Phi$ to a fibration $\widehat{\Phi}:
\widehat{X}\rightarrow \mathbb{P}^1$ and $f$ to a  birational
transformation $\widehat{f}$ of $\widehat{X}$ such that $\widehat{f}$ preserves the fibration $\widehat{\Phi}$.

\end{thm}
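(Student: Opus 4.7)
The plan is to argue by contradiction using the preceding Theorem~\ref{invarianthypersurface}. Suppose $f$ admits infinitely many periodic curves; then by Theorem~\ref{invarianthypersurface} there is a proper modification $\pi : \widehat{X} \to X$, a fibration $\widehat{\Phi} : \widehat{X} \to \mathbb{P}^1$, and a birational self-map $\widehat{f}$ of $\widehat{X}$ conjugate to $f$ such that $\widehat{f}$ preserves $\widehat{\Phi}$. Since $\lambda_1$ is a birational invariant (Proposition-Definition~\ref{lamda1}), we have $\lambda_1(\widehat{f}) = \lambda_1(f) > 1$. The remaining task is to show that any birational self-map of a projective smooth surface preserving a fibration over $\mathbb{P}^1$ has $\lambda_1 = 1$, yielding the desired contradiction.

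To set this up, I first pass to an algebraically stable model via Theorem~\ref{AS modele} (which can be obtained by further blowups that do not destroy the fibration), so that $\lambda_1(\widehat{f})$ equals the spectral radius of $\widehat{f}^\ast$ acting on $N^1(\widehat{X})_{\mathbb{R}}$. Let $F \in N^1(\widehat{X})$ be the class of a general fiber of $\widehat{\Phi}$. Because $\widehat{f}$ sends fibers to fibers via an induced birational self-map of $\mathbb{P}^1$ — necessarily an automorphism — we have $\widehat{f}^\ast F = F$, and of course $F^2 = 0$.

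Next I exploit the flag
$$\langle F \rangle \subset F^{\perp} \subset N^1(\widehat{X})_{\mathbb{R}},$$
which $\widehat{f}^\ast$ preserves. The Hodge index theorem (Theorem~\ref{Hodge}) implies that the intersection form descends to a negative definite form on $F^{\perp}/\langle F \rangle$. The action of $\widehat{f}^\ast$ on this quotient is an isometry of an integer lattice for a negative definite form, so it lies in a finite group. Replacing $f$ by a suitable iterate (which is harmless since $\lambda_1(f^N) = \lambda_1(f)^N$), I may assume this induced action is trivial. Because $\widehat{f}^\ast$ preserves the intersection pairing and fixes $F$, the induced action on the one-dimensional quotient $N^1(\widehat{X})_{\mathbb{R}}/F^{\perp}$ is also the identity. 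Hence $\widehat{f}^\ast$ is unipotent on $N^1(\widehat{X})_{\mathbb{R}}$, so its spectral radius equals $1$, contradicting $\lambda_1(\widehat{f}) > 1$.

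The main obstacle is ensuring that the passage to the algebraically stable model is compatible with the fibration and with the conjugacy to $f$, and carefully justifying that $\widehat{f}^\ast F = F$ rather than some nontrivial multiple; the latter relies on the fact that the action of $\widehat{f}$ on the base $\mathbb{P}^1$ is an automorphism (any birational map of $\mathbb{P}^1$ is an automorphism). Once these structural points are in place, the Hodge-index/finite-isometry-group argument is purely linear-algebraic and cleanly forces $\lambda_1 = 1$, completing the proof of the ``in particular'' statement since curves of periodic points are in particular periodic curves.
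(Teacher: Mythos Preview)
Your proposal does not prove the stated Theorem~\ref{invarianthypersurface}; it \emph{uses} it as a black box. What you have written is a proof of Theorem~\ref{finite} (finitely many periodic curves when $\lambda_1>1$), which in the paper is obtained precisely by combining Theorem~\ref{invarianthypersurface} with Proposition~\ref{lamda1=1}. The statement you were asked to address --- Cantat's result that infinitely many periodic hypersurfaces force the existence of a preserved non-constant rational function --- is a cited result from \cite{Invarianthypersurfaces}; the paper does not give a proof of it, and neither do you. Its proof requires an entirely different argument (comparing the span of the classes of periodic hypersurfaces with the rank of $N^1(X)$ to produce two independent numerical relations among them, hence a rational first integral), none of which appears in your write-up.

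Regarding what you did prove: as a proof of Theorem~\ref{finite} your argument is correct and follows the paper's overall architecture. Your justification that a fibration-preserving map has $\lambda_1=1$ differs from the paper's Proposition~\ref{lamda1=1}. The paper takes a nef eigenvector $\omega$ for $\lambda_1$, pairs it against the fiber class $[F]$, and uses Hodge index only in the degenerate case $(\omega\cdot[F])=0$ to force $\omega\in\mathbb{R}[F]$. You instead analyze the flag $\langle F\rangle\subset F^{\perp}\subset N^1_{\mathbb{R}}$, invoke Hodge index to make the middle graded piece negative definite, and use finiteness of the isometry group of a definite integral lattice to conclude unipotency of $f^*$ after an iterate. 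Both arguments are valid; the paper's eigenvector approach is shorter and avoids passing to an iterate, while yours yields the finer information that $\|(f^n)^*\|$ grows at most quadratically.
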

We conclude by the following well-known fact
\begin{pro}\label{lamda1=1}
Let $X$ be a projective smooth  surface, $f:X\dashrightarrow X$ be a birational map. If $f$
preserves a fibration $\Phi: X\rightarrow \mathbb{P}^1$, then
$\lambda_1(f)=1$.
\end{pro}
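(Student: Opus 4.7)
The plan is to exploit that $f^{*}$ fixes the class $F$ of a general fiber together with the Hodge-index structure on $\mathbb{L}^{2}(\mathfrak{X})$, obtaining an $O(n^{2})$ bound on $\deg_{L}(f^{n})$, whence $\lambda_{1}(f)=1$. First, by Theorem~\ref{AS modele} and birational invariance of $\lambda_{1}$, I may assume $f$ is algebraically stable, so that $(f^{*})^{n}=(f^{n})^{*}$. Since $f$ is birational and $\Phi$ is a fibration, $f$ induces a dominant morphism $g:\P^{1}\to \P^{1}$. I would argue $\deg g=1$: only finitely many irreducible curves are contracted by $f$, so for generic $t$ every fiber $F_{s}$ with $s\in g^{-1}(g(t))$ dominates $F_{g(t)}$ via $f$; since $f$ is birational, the total degree $1$ forces $|g^{-1}(g(t))|=1$, i.e.\ $\deg g=1$. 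Hence $g\in \Aut(\P^{1})$ and $f^{*}F=F$, $f_{*}F=F$.

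Next I would work in $\mathbb{L}^{2}(\mathfrak{X})$, where $f^{*}$ is an isometry. Fix $L$ ample, set $c=(L\cdot F)>0$, and $L':=L-\tfrac{L^{2}}{2c}F$, so that $L'^{2}=0$ and $(L'\cdot F)=c$. The plane $V=\mathrm{span}(F,L')$ is of signature $(1,1)$, and by Proposition~\ref{hodge index} its orthogonal complement $W\subset \mathbb{L}^{2}(\mathfrak{X})$ is negative definite. Writing $\mathcal{L}_{n}:=(f^{*})^{n}L=\alpha_{n}F+\beta_{n}L'+\gamma_{n}$ with $\gamma_{n}\in W$, the two preserved quantities $\mathcal{L}_{n}\cdot F=c$ and $\mathcal{L}_{n}^{2}=L^{2}$ force $\beta_{n}=1$ and yield the key identity $\deg_{L}(f^{n})=L^{2}-\tfrac{1}{2}\gamma_{n}^{2}$. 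It thus suffices to show $-\gamma_{n}^{2}=O(n^{2})$.

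For the recursion, expanding $f^{*}L'=L'+\tau F+\delta$ with $\delta\in W$ and using $(f^{*}L')^{2}=0$ gives $\tau=-\delta^{2}/(2c)\geq 0$. Since $f^{*}F=F$, the operator $f^{*}$ preserves $F^{\perp}$, so $f^{*}\gamma_{n}\in F^{\perp}$ decomposes as $\mu_{n}F+\eta_{n}$ with $\eta_{n}\in W$, and isometry gives $\eta_{n}^{2}=\gamma_{n}^{2}$. Applying $f^{*}$ to $\mathcal{L}_{n}$ yields the recursion $\gamma_{n+1}=\delta+\eta_{n}$, and Cauchy--Schwarz in the negative-definite space $W$ produces $\sqrt{-\gamma_{n+1}^{2}}\leq \sqrt{-\delta^{2}}+\sqrt{-\gamma_{n}^{2}}$. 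Induction from $\gamma_{0}=0$ gives $-\gamma_{n}^{2}\leq n^{2}(-\delta^{2})$, so $\deg_{L}(f^{n})=O(n^{2})$ and $\lambda_{1}(f)=1$.

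The main obstacle will be the careful bookkeeping in the recursion step --- notably verifying that $f^{*}$ maps $W$ into $\mathbb{R}F\oplus W$ (using $f^{*}F=F$ to ensure $F^{\perp}$ is preserved) and preserves the $W$-self-intersection (using the $\mathbb{L}^{2}$-isometry). Conceptually, this encodes that $f^{*}$ acts as a parabolic or elliptic isometry of the Gromov-hyperbolic space $\mathbb{H}(\mathfrak{X})$, fixing the boundary point $[F]$; the orbit of $[L]$ therefore moves along a horosphere based at $[F]$ with at most linear Euclidean translation, which corresponds exactly to the quadratic growth bound obtained above.
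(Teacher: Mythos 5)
Your proof is correct, but it takes a genuinely different route from the paper's. The paper passes to an algebraically stable model, invokes the existence of a nef class $\omega\in N^1(X)_{\mathbb{R}}-\{0\}$ with $f^*\omega=\lambda_1(f)\,\omega$, and then the adjunction identity $(\omega\cdot F)=(f^*\omega\cdot F)=\lambda_1(f)(\omega\cdot F)$ forces $\lambda_1(f)=1$ unless $(\omega\cdot F)=0$, in which case the Hodge index theorem gives $\omega=kF$ and again $\lambda_1(f)=1$ since $f^*[F]=[F]$ --- a two-line argument, but one that rests on the (uncited, Perron--Frobenius-type) eigenvector statement for algebraically stable maps. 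You instead bypass the eigenvector entirely: you exploit that $f^*$ acts isometrically on $\mathbb{L}^2(\mathfrak{X})$ and fixes the isotropic class $F$, decompose $\mathcal{L}_n$ against the hyperbolic plane $\mathrm{span}(F,L')$ and its negative-definite orthogonal complement (Proposition \ref{hodge index}), and run the triangle inequality through the recursion $\gamma_{n+1}=\delta+\eta_n$ to get $\deg_L(f^n)=L^2-\tfrac12\gamma_n^2=O(n^2)$. All the steps check out: $\deg g=1$ so $f^*F=f_*F=F$ (the equality $f^*F=F$ as an $\mathbb{L}^2$-class, not merely as an incarnation on $X$, follows from $(f^*F)^2=0=((f^*F)_X)^2$ and the monotonicity of $\pi\mapsto(\alpha_\pi^2)$ --- worth one sentence, since you apply $f^*$ termwise to the decomposition of $\mathcal{L}_n$); the bookkeeping $\beta_n=1$, $\alpha_n c=(L^2-\gamma_n^2)/2$, $\eta_n^2=\gamma_n^2$ is exact; and the triangle inequality in the negative-definite subspace needs only Cauchy--Schwarz, not completeness. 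Your argument is longer but buys two things: it is self-contained given the $\mathbb{L}^2$ machinery already set up in Section \ref{notation} (no appeal to the eigenvector theorem, and in fact no need for the reduction to an algebraically stable model, since functoriality of $f^*$ holds on $\mathbb{L}^2(\mathfrak{X})$), and it yields the quantitative conclusion $\deg_L(f^n)=O(n^2)$, recovering the upper bound in the Diller--Favre--Gizatullin trichotomy of Theorem \ref{fadi} rather than just $\lambda_1(f)=1$.
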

\proof[Proof of Proposition \ref{lamda1=1}]
By Theorem \ref{AS modele}, we may assume that $f$ is algebraically stable on $X$. Then there is a nef class $\omega\in N^1(X)_{\mathbb{R}}-\{0\}$, such that $f^*(\omega)=\lambda_1(f)\omega$. Let $[F]\in N^1(X)_{\mathbb{R}}$ be the class of a fiber. Since $f$ is birational, $f^*[F]=f_*[F]=[F]$. Then $$(\omega\cdot [F])=(\omega\cdot f_*[F])=(f^*\omega\cdot [F])=\lambda_1(f)(\omega\cdot [F]).$$ Then $(\omega\cdot [F])=0$ or $\lambda_1(f)=1$. If $(\omega\cdot [F])=0$, since $(F^2)=0$ and $L$ is nef, then $L=kF$ for some $k\in \mathbb{R}$ by Theorem \ref{Hodge}. Thus we also have that $\lambda_1(f)=1$, because $f^*[F]=[F]$.
\endproof

\endproof

\rem
In fact, this theorem is stated over $\mathbb{C}$ in \cite{Invarianthypersurfaces}. Its proof extends immediately to any field.
\endrem

\subsection{Proof of Theorem \ref{main1}}
We recall Theorem \ref{main1}.

\begin{thm}\label{main}(=Theorem \ref{main1})
Let $X$ be a projective surface over $\mathbf{k}$, and $f:X\dashrightarrow X$ be a
birational map. If
$\lambda_1(f)>1$ then the set of non-critical periodic points is
Zariski dense.

\end{thm}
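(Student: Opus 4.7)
The plan is to follow the Hrushovski--Fakhruddin strategy: produce Zariski density of non-critical periodic points on a special fiber over a finite field via Proposition \ref{Fakhruddin 5.5}, control curves of periodic points via Cantat's Theorem \ref{finite}, and then lift the conclusion back to $(X,f)$ by repeated application of Lemma \ref{dvr}. The semi-continuity result Theorem \ref{generally} is exactly what allows us to find a special fiber on which the hypothesis $\lambda_1 > 1$ still holds.

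After replacing $X$ by a resolution of singularities (which exists in dimension two in any characteristic), I would spread $(X,f)$ out: choose a finitely generated subring $R \subseteq \mathbf{k}$ over the prime field such that $X$ and $f$ descend to a smooth projective model $\mathbf{X} \to \Spec R$ and a birational self-map $\Phi$ over $R$ whose geometric generic fiber recovers $(X,f)$. By Theorem \ref{generally} the function $\mathfrak{p} \mapsto \lambda_1(\Phi_{\mathfrak{p}})$ is lower semi-continuous on the open locus where $\Phi_{\mathfrak{p}}$ is birational, so the set of points $\mathfrak{p}$ with $\lambda_1(\Phi_{\mathfrak{p}}) > 1$ is Zariski dense. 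Because $R$ is finitely generated over $\mathbb{Z}$ or $\mathbb{F}_p$, closed points with finite residue field are also Zariski dense, so I can pick a closed point $\mathfrak{p}$ satisfying both properties and set $\mathbb{F} = \overline{\kappa(\mathfrak{p})}$. On the fiber over $\mathbb{F}$, Proposition \ref{Fakhruddin 5.5} gives Zariski density of the non-critical periodic points of $\Phi_{\mathfrak{p}}$, and Theorem \ref{finite} ensures that $\Phi_{\mathfrak{p}}$ has only finitely many curves of periodic points.

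To transfer this density to the generic fiber I would choose a saturated chain $\mathfrak{p} = \mathfrak{p}_d \supsetneq \mathfrak{p}_{d-1} \supsetneq \cdots \supsetneq \mathfrak{p}_0 = (0)$ of primes in $R_{\mathfrak{p}}$ and walk up the chain via Lemma \ref{dvr}. At each step $i$, localize $R/\mathfrak{p}_{i-1}$ at the image of $\mathfrak{p}_i$ and normalize to obtain a discrete valuation ring whose special fiber sees $\kappa(\mathfrak{p}_i)$ and whose generic fiber is $\Frac(R/\mathfrak{p}_{i-1})$; Lemma \ref{dvr} then propagates Zariski density of non-critical periodic points one dimension upward. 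The hypothesis of only finitely many periodic curves persists at every intermediate stage because $\lambda_1 > 1$ is preserved on each intermediate generic fiber by Theorem \ref{generally}, so Theorem \ref{finite} applies each time. After $d = \dim R_{\mathfrak{p}}$ applications the density is established over $\overline{K}$ on the generic fiber, where $K = \Frac R$; since $\overline{K}$ embeds into the algebraically closed field $\mathbf{k}$, base change preserves Zariski density and the theorem follows.

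The main obstacle is precisely this iterated lifting: at each intermediate step one must verify that $\Phi$ restricts to a birational map on the fiber in question, that non-critical periodic points remain dense, and that only finitely many curves of periodic points obstruct the lift. Both the non-degeneracy of $\Phi_{\mathfrak{p}_i}$ and the finiteness of its periodic curves are controlled by the persistent inequality $\lambda_1 > 1$ on each intermediate generic fiber, which is exactly the input supplied by Theorem \ref{generally} combined with Theorem \ref{finite}. Once this coordination is in place, the remainder of the argument is a direct assembly of ingredients already proved in the paper.
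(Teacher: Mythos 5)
Your proposal is correct and follows essentially the same route as the paper: spread out, use Theorem \ref{generally} to find a finite-field fiber with $\lambda_1>1$, apply Proposition \ref{Fakhruddin 5.5} and Theorem \ref{finite} there, and lift with Lemma \ref{dvr}. The only difference is organizational — you iterate Lemma \ref{dvr} along a saturated chain of primes in one spreading-out ring, whereas the paper phrases the same iteration as an induction on the transcendence degree of $\mathbf{k}$ over its prime field, re-spreading out over an algebraically closed subfield at each step; both versions rely on the same monotonicity of $\lambda_1$ under specialization to keep Theorem \ref{finite} applicable at every stage.
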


\proof
Let $L$ be a very ample line bundle of $X$.
We may assume that the transcendence
degree of $\mathbf{k}$ over its prime field $F$ is finite, since we can
find a subfield of $\mathbf{k}$ which is finitely generated over $F$ such that $X$, $f$ and $L$ are all defined over this subfield.
We complete the proof by induction on the transcendence degree of $\mathbf{k}$ over $F$.

If $\mathbf{k}$ is the closure of a finite field, then the theorem holds by
Proposition \ref{Fakhruddin 5.5}.

If $\mathbf{k}=\overline{\mathbb{Q}}$, there is a regular subring $R$ of
$\overline{\mathbb{Q}}$ which is finitely generated over
$\mathbb{Z}$, such that $X$, $L$, $f$ are defined over $R$. By Theorem \ref{generally}, there is a maximal ideal $\mathfrak{m}$ of $R$ such that the fiber $X_m$ is smooth and the the restriction $f_m$ of $f$ on this fiber is birational with $\lambda_1(f_m)>1$. Since $R$ is regular
and finitely generated over $\mathbb{Z}$, $R_{\mathfrak{m}}$ the
localization of $R$ at $\mathfrak{m}$ is a d.v.r such that
$\overline{\Frac(R_{\mathfrak{m}})}=\overline{\mathbb{Q}}$ and
$R_{\mathfrak{m}}/\mathfrak{m}R_{\mathfrak{m}}=R/\mathfrak{m}$. Then
by Proposition \ref{Fakhruddin 5.5} the set of non-critical periodic
points of $f_m$ is Zariski dense in the special fiber.
Since $\lambda_1(f_m)>1$, there are only finitely many
curves of periodic points by Theorem \ref{finite}. The conditions in lemma
\ref{dvr} are all satisfied. Thus by lemma
\ref{dvr} the non-critical periodic points of $f$ form a Zariski dense
set. Then the theorem holds in this case.

If the transcendence degree of $\mathbf{k}$ over $F$ is greater than $1$, we pick an algebraically closed subfield $K$
of $\mathbf{k}$ such that the transcendence degree of $K$ over $F$ equals the transcendence degree of $\mathbf{k}$ over $F$ minus 1. Then we pick a subring $R$
of $k$ which is finitely generated over $K$, such that $X$, $L$ and $f$ are all
defined over $R$. Since $\Spec R$ is regular on an open set. We may assume that $R$ is regular by adding finitely many reciprocals of elements in $R$. We do the same argument as in the case $\mathbf{k}=\overline{\mathbb{Q}}$, then we prove the theorem.
\endproof
\subsection{Existence of Zariski dense orbits}\label{subam}
In this subsection, we denote by $\mathbf{k}$ an algebraically closed field of characteristic $0$.

\begin{thm}\label{coram}(=Theorem \ref{coram1})Let $X$ be a projective surface over an algebraically closed field $\mathbf{k}$ with characteristic $0$, $f:X\dashrightarrow X$ be a birational map with $\lambda_1(f)>1$. Then there is a point $x\in X$ such that $f^n(x)\in X-I(f)$ for any $n\in \mathbb{Z}$ and $\{f^{n}(x)|n\in \mathbb{Z}\}$ is Zariski dense.
\end{thm}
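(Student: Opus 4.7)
My approach is to combine Amerik's $p$-adic existence theorem for non-preperiodic algebraic points \cite{Amerik} with the finiteness of periodic curves that $\lambda_1(f)>1$ forces through Cantat's Theorem \ref{invarianthypersurface} and Proposition \ref{lamda1=1}. Concretely, let $V\subsetneq X$ be the union of all periodic curves of $f$; this is a proper Zariski closed subset. Any non-preperiodic point $x\in X\setminus V$ whose full $\mathbb{Z}$-orbit is defined automatically has Zariski dense orbit, as I will explain at the end. So the problem is reduced to producing such a point.

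To invoke Amerik's theorem I would first reduce to a number-field setting. Spread $(X,f,V)$ out to a family $(\mathcal{X},\mathcal{F},\mathcal{V})$ over a finitely generated $\bar{\mathbb{Q}}$-subalgebra $\bar{R}\subset\mathbf{k}$, and specialize at a maximal ideal $\mathfrak{m}\subset\bar{R}$, so that the residue field is $\bar{\mathbb{Q}}$. By the lower semicontinuity of $\lambda_1$ proved in Subsection \ref{sslsc} (Theorem \ref{generally}), for a generic choice of $\mathfrak{m}$ the fiber $(X_s,f_s)$ satisfies $\lambda_1(f_s)>1$ and $V_s\subsetneq X_s$. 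Amerik's construction, which produces a non-preperiodic $\bar{\mathbb{Q}}$-point inside a $p$-adic neighborhood of a non-degenerate fixed point of some iterate of $\bar{f}_s$ on a place of good reduction, then yields $x_s\in X_s(\bar{\mathbb{Q}})$ with well-defined bi-infinite $f_s$-orbit. Using the Zariski density of isolated periodic points on the reduction (Theorem \ref{main1} via Proposition \ref{Fakhruddin 5.5} and Theorem \ref{finite}), I would choose the starting fixed point outside the reduction of $V$, so that $x_s\notin V_s$. A DVR-style lifting argument in the spirit of Lemma \ref{dvr}, carried out on the strict Henselization of $\bar{R}_{\mathfrak{m}}$, then transfers $x_s$ to a point $x\in X(\mathbf{k})$ whose bi-infinite $f$-orbit is defined and lies outside $V$.

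The Zariski density of the orbit of such an $x$ is then automatic. Let $Y$ be the Zariski closure of $\{f^n(x):n\in\mathbb{Z}\}$. Since $Y$ is set-invariant under $f$, the birational map $f$ permutes the irreducible components of $Y$, so each component is $f^N$-invariant for some $N\geq 1$. A one-dimensional $f^N$-invariant component is a periodic curve of $f$ and therefore lies in $V$, contradicting $x\in Y\setminus V$. Non-preperiodicity of $x$ rules out $\dim Y=0$. Hence $\dim Y=2$ and $Y=X$.

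The main obstacle is the refinement of Amerik's construction used in the previous step: one must verify that her $p$-adic ball can be chosen so that the resulting non-preperiodic point lies outside $V$. This reduces to choosing the base fixed point of $\bar{f}_s^N$ outside the reduction of $V$, which is possible because that reduction is a proper closed subset and, by the Hrushovski/Fakhruddin arguments recalled in Proposition \ref{Fakhruddin 5.5} together with Theorem \ref{finite}, the isolated fixed points of $\bar{f}_s^N$ are Zariski dense on the reduction. A secondary technical point is arranging that the orbit is defined in \emph{both} directions; this is handled by applying the same construction to $f^{-1}$ inside the same $p$-adic neighborhood, which is harmless since $\lambda_1(f^{-1})=\lambda_1(f)>1$ and the indeterminacy loci $I(f)\cup I(f^{-1})$ form a finite set avoidable by shrinking the ball.
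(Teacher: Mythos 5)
Your overall architecture coincides with the paper's: Cantat's Theorem \ref{invarianthypersurface} (via Proposition \ref{lamda1=1}) gives finiteness of periodic curves, Amerik's theorem supplies a non-preperiodic point with well-defined bi-infinite orbit, and the orbit-closure dichotomy you run at the end is exactly the paper's concluding argument (and your handling of the zero-dimensional components of $Y$ is correct). The two bridging steps, however, are where your write-up has soft spots, and in both places the paper does something different and cleaner.

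First, the transfer from $\overline{\mathbb{Q}}$ to $\mathbf{k}$. Lemma \ref{dvr} cannot be invoked "in spirit" here: its mechanism is that periodic points are cut out by the scheme $\Delta_{\mathbf{X}}\bigcap\Gamma_{\Phi^n}$, whose components have dimension at least $1$ and therefore meet the generic fiber. A non-preperiodic point is not cut out by any such scheme, so there is nothing for that dimension count to act on. Your parenthetical appeal to the strict Henselization can be made to work (a smooth point of the special fiber lifts to a section over $\bar{R}_{\mathfrak{m}}^{sh}$, whose generic point lies in $X(\mathbf{k})$ and inherits a well-defined infinite orbit outside $V$ because $I(f^n)$, the loci $\{f^a=f^b\}$, and $V$ are all closed), but none of these verifications appear in your proposal, and they are the actual content of the step. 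The paper instead proves Lemma \ref{section}: through the specialized point one finds an irreducible multisection $S$ dominating the base, and the generic point of $S$ is the desired lift; this avoids Henselization entirely. Second, your plan to re-enter Amerik's $p$-adic construction and re-center her residue disk at a fixed point chosen outside the reduction of $V$ requires opening up a black box whose hypotheses (a suitable non-degenerate fixed point at a place of good reduction) you have not checked are compatible with that constraint. The paper sidesteps this: it applies the orbit-existence statement (Proposition \ref{am}) directly to the quasi-projective variety $U=X\setminus V$, on which $f$ restricts to a birational self-map, so the resulting orbit avoids $V$ by construction. So: right strategy, but the lifting step as literally stated (via Lemma \ref{dvr}) would fail, and the avoidance of $V$ is achieved by a needlessly delicate route.
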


We prove this theorem in several steps. Let us first recall the following theorem of E. Amerik.
\begin{thm}\label{amerik}(\cite{Amerik}) Let $X$ be a variety over $\overline{\mathbb{Q}}$ and $f:X\dashrightarrow X$ be a birational map. Then there exists a point $x\in X$ such that $f^n(x)\in X-I(f)$ for any $n\in \mathbb{Z}$ and $\{f^{n}(x)|n\in \mathbb{Z}\}$ is infinite.
\end{thm}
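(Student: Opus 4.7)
The plan is to follow Amerik's original approach via $p$-adic analytic dynamics. Implicit in the statement is that $f$ has infinite order, which is the only nontrivial case (for $f$ of finite order the conclusion fails because every orbit has at most finitely many points), and I will assume infinite order throughout; this is harmless for the application to Theorem \ref{coram} since $\lambda_1(f)>1$ forces infinite order.

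First I would spread $X$ and $f$ out over a finitely generated $\mathbb{Z}$-subalgebra $R\subset \overline{\mathbb{Q}}$, so that (after shrinking $\Spec R$) one obtains a smooth projective $R$-scheme $\mathcal{X}$ and a birational self-map $\mathcal{X}\dashrightarrow \mathcal{X}$ over $R$. Pick a closed point $\mathfrak{m}\in \Spec R$ of good reduction with residue field $\mathbb{F}_q$: the special fiber $\mathcal{X}_\mathfrak{m}$ is smooth projective and $f_\mathfrak{m}$ is birational. By Lang--Weil the number of $\mathbb{F}_{q^s}$-points of $\mathcal{X}_\mathfrak{m}$ grows like $q^{s\dim X}$, so for $s$ large enough one can find a point $\bar x\in \mathcal{X}_\mathfrak{m}(\overline{\mathbb{F}_q})$ whose two-sided orbit under $f_\mathfrak{m}$ stays away from $I(f_\mathfrak{m})\cup I(f_\mathfrak{m}^{-1})\cup \Sing(\mathcal{X}_\mathfrak{m})$ and is periodic, say of period $N$.

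Replacing $f$ by $f^N$, the point $\bar x$ becomes fixed. Let $K$ be a finite extension of $\Frac(\widehat{R}_\mathfrak{m})$ containing $\bar x$. Because $\bar x$ is a smooth non-critical fixed point, Hensel's lemma applied to the system $f(x)=x$ in the formal neighborhood lifts $\bar x$ to a fixed point $x_0\in \mathcal{X}(\mathcal{O}_K)$, and in local coordinates around $x_0$ the map $f$ is represented by a power series on some polydisc with coefficients in $\mathcal{O}_K$. The differential $A:=df_{x_0}$ lies in $\Aut(T_{x_0}X)$ with entries in $\mathcal{O}_K$, and the two-sided $p$-adic orbit of any point in the polydisc stays away from $I(f)\cup I(f^{-1})$ for free. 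The heart of the argument is the following analytic lemma, which I would take as the main technical ingredient: if $F\colon \mathcal{O}_K^n\to\mathcal{O}_K^n$ is a power series map with $F(0)=0$ and $dF(0)$ has an eigenvalue $\lambda\in\mathcal{O}_K^\times$ that is not a root of unity, then one can $p$-adically linearize $F$ along a $\lambda$-invariant subspace by Koenigs-type iteration on a sufficiently small polydisc, and a generic point on the linearized eigendirection has an infinite two-sided orbit; specialization at algebraic parameters then produces a point of $X(\overline{\mathbb{Q}})$ with infinite orbit. The case where all eigenvalues of $A$ are roots of unity is reduced to the unipotent case after a further iterate and handled by a formal-logarithm argument, exhibiting infinite orbits by pushing points in a generic direction.

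The main obstacle is the final step: ensuring the existence of a closed point $\mathfrak{m}$ and a fixed point $\bar x$ for which $A$ admits a non-torsion $\mathcal{O}_K^\times$ eigenvalue, or else handling the residual unipotent case. This uses crucially that $f$ has infinite order, together with a Chebotarev/specialization argument showing that the eigenvalues of the linearizations $df$ at periodic points cannot all be torsion uniformly as $p$ and $\bar x$ vary; this is the technical core of Amerik's paper and requires a careful interplay between algebraic invariants of $f$ and the arithmetic of the fields of definition of its periodic points.
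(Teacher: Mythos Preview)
The paper does not supply its own proof of this statement: Theorem~\ref{amerik} is quoted verbatim from Amerik's paper and used as a black box in the proof of Proposition~\ref{am} and Theorem~\ref{coram}. So there is nothing in the paper to compare your sketch against; the relevant comparison is with Amerik's original argument.

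Your setup is right --- spread out over $\Spec R$, pick a prime of good reduction, find a periodic point $\bar x$ over the residue field, replace $f$ by $f^N$ so that $\bar x$ is fixed, and do $p$-adic analysis in a neighbourhood. The divergence from Amerik, and the gap in your sketch, occurs at the next step. You propose to apply Hensel's lemma to $f(x)=x$ to lift $\bar x$ to a fixed point $x_0\in\mathcal{X}(\mathcal{O}_K)$ and then linearise $f$ at $x_0$ \`a la Koenigs. But Hensel for the system $f(x)-x=0$ requires $\det(df_{\bar x}-\id)\in(\mathcal{O}_K/\mathfrak{m})^{\times}$, i.e.\ that $1$ is not an eigenvalue of $df_{\bar x}$; ``non-critical'' only gives $\det(df_{\bar x})\neq 0$, which is a different condition. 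Nothing in the construction guarantees a simple fixed point, and for many $f$ (e.g.\ those with a curve of fixed points through $\bar x$) no such lift exists. Your subsequent Koenigs-type linearisation and the proposed Chebotarev argument for producing a non-torsion eigenvalue are therefore built on an unsecured foundation, and in any case are not what Amerik does.

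Amerik avoids this entirely. She never lifts $\bar x$ to a fixed point. Instead she works on the whole residue polydisc $D=\{x\in\mathcal{X}(\mathcal{O}_K):x\equiv\bar x\pmod{\mathfrak{m}}\}$, which is automatically preserved by $f^N$ because $\bar x$ is fixed modulo $\mathfrak{m}$. On $D$ the map $f^N$ is given by convergent power series with coefficients in $\mathcal{O}_K$; after replacing $N$ by a further multiple one arranges that $f^N$ is congruent to the identity modulo a high enough power of the maximal ideal. The key analytic input is then Bell's $p$-adic interpolation lemma (in the spirit of Skolem--Mahler--Lech and Poonen's $p$-adic flows): such a map is the time-$1$ map of a $p$-adic analytic flow $\Phi_t$ on $D$. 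Since $f$ has infinite order the flow is nontrivial, so the set of points of $D$ with finite $f^N$-orbit is a proper $p$-adic analytic subset. Points of $X(\overline{\mathbb{Q}})$ are $p$-adically dense in $D$, hence one of them has infinite orbit and, by construction, its full two-sided orbit avoids $I(f)\cup I(f^{-1})$. No eigenvalue dichotomy, no Hensel lifting of fixed points, and no Chebotarev argument are needed.
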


We next extend this result to any algebraically closed field of characteristic $0$. To do so we shall need the following lemma.
\begin{lem}\label{section}Let $\pi:X\rightarrow Y$ be a morphism between two varieties defined over an algebraically closed field. For any point $x\in X$, there is an irreducible subvariety $S$ through $x$ of $X$, such that $\dim S=\dim Y$, and the restriction of $\pi$ on $S$ is dominant to $Y$.
\end{lem}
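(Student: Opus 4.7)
The plan is to proceed by induction on $r := \dim X - \dim Y$. Since any irreducible subvariety $S$ of $X$ containing $x$ lies in the irreducible component of $X$ through $x$, and the image $\pi(S)$ can be dense in $Y$ only if $\pi$ is dominant on that component, I may assume throughout that $X$ is irreducible and that $\pi$ is dominant. Set $n = \dim X$ and $d = \dim Y$. The base case $r = 0$ is immediate: take $S = X$.

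For the inductive step $r \geq 1$, it suffices to find an irreducible closed subvariety $X' \subsetneq X$ of dimension $n - 1$ with $x \in X'$ and $\pi|_{X'}$ dominant; applying the induction hypothesis to $(\pi|_{X'}, x)$ then yields an $S \subseteq X'$ of dimension $d$ through $x$ with $\pi|_S$ dominant, as required. To construct $X'$, I would fix a projective embedding $X \hookrightarrow \mathbb{P}^N$ via a sufficiently ample line bundle, and consider the linear system $\mathcal{L}_x$ of hypersurfaces through $x$. For a generic member $D \in \mathcal{L}_x$, the intersection $X \cap D$ has pure dimension $n-1$ and contains $x$. Moreover $\pi|_{X \cap D}$ is dominant: if $f$ is a nonzero defining section of $D$ and $\pi(X \cap D)$ were contained in some proper closed $T \subsetneq Y$, then $X \cap D \subseteq \pi^{-1}(T)$, and restricting to the generic fiber $X_\eta$ of $\pi$ (which has dimension $r \geq 1$) would force $f$ to vanish identically on $X_\eta$, hence on $X$, contradicting $X \not\subseteq D$.

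It remains to pin down an irreducible component of $X \cap D$ through $x$ that dominates $Y$. The plan here is to invoke Bertini's irreducibility theorem: the base locus of $\mathcal{L}_x$ on $X$ is the single point $\{x\}$ and so has no divisorial component, and for the embedding sufficiently ample the associated rational map $X \dashrightarrow \mathbb{P}(\mathcal{L}_x^{\vee})$ has image of dimension $n \geq 2$ (since we are in the inductive step). Bertini then implies that a general $X \cap D$ is irreducible, and this unique component both passes through $x$ and, by the previous paragraph, dominates $Y$; it serves as the desired $X'$. The main technical obstacle is verifying the hypotheses of Bertini's theorem, in particular handling the degenerate case where $X$ is special at $x$ (for instance a cone with apex $x$); this is resolved by passing to a sufficiently high Veronese re-embedding, which guarantees that $\mathcal{L}_x$ separates tangent directions at $x$ and distinguishes points on $X \setminus \{x\}$, so that Bertini applies, the induction closes, and the required $S$ is produced.
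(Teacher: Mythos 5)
Your skeleton is the same as the paper's: induct on $r=\dim X-\dim Y$, and in the inductive step cut $X$ by a general hyperplane section through $x$ to drop the relative dimension by one. Where you genuinely diverge is at the delicate point: the paper simply takes a projective completion of an affine chart, chooses a general very ample section through $x$ meeting the fiber through $x$ properly, and then \emph{picks an irreducible component of the section through $x$}, asserting without real justification that this particular component dominates $Y$. You instead arrange (via a sufficiently high Veronese re-embedding, so that the linear system $|\sO(d)\otimes\mathfrak m_x|$ has base locus $\{x\}$ and generically injective associated map) that Bertini's irreducibility theorem makes the whole section $X\cap D$ irreducible, so there is nothing to choose. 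This is a real improvement: it repairs exactly the weakest step of the paper's argument, at the cost of invoking the irreducibility form of Bertini (which is fine here --- the lemma is only used in the characteristic-$0$ subsection, and in any case the irreducibility/connectedness version of Bertini, unlike the smoothness version, is characteristic-free). Your aside that $\dim X\ge 2$ in the inductive step silently assumes $\dim Y\ge 1$; when $\dim Y=0$ the lemma is trivial with $S=\{x\}$, so just dispose of that case separately.

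Two points need fixing. First, your dominance argument is inverted: from $X\cap D\subseteq\pi^{-1}(T)$ you conclude that $f$ \emph{vanishes identically} on the generic fiber $X_\eta$, but the containment says the zero locus of $f|_X$ misses $X_\eta$, i.e.\ $f$ is \emph{nowhere} vanishing there. The correct contradiction is that a section of an ample line bundle on a projective variety must vanish somewhere on every positive-dimensional closed subvariety, applied to the closure of a general fiber (which has dimension $r\ge1$); equivalently, $X\setminus D$ is affine and cannot contain a positive-dimensional proper fiber. Second, the lemma is stated for arbitrary varieties, so "fix a projective embedding of $X$" is not available as written; you must first replace $X$ by an affine neighbourhood of $x$ and pass to a projective completion (as the paper does), and then make sure the dominance argument is run on the completion, where properness of fiber closures is available. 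Both repairs are routine, and with them your proof closes.
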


\proof
We prove this lemma by induction on  $\dim X-\dim Y$.

If  $\dim X-\dim Y=0$, we pick $S=X$. Then the lemma trivially holds.

If $ \dim X-\dim Y=n\geq 1$. We may assume that $X$ is affine. There is a projective completion $\widetilde{X}$ of $X$. Let $F$ be the fiber of $\pi$ through $x$. We have $\dim F\geq \dim X-\dim Y=n$. Pick $L$ a very ample line bundle of $\widetilde{X}$. Pick $\widetilde{X'}$ a general section of $L$ through $x$ such that $\widetilde{X'}$ intersects $\overline{F}$ properly in $\widetilde{X}$ by Bertini's Theorem (see \cite{Hartshorne1977}). Let $X'=\widetilde{X'}\bigcap X$ and $X''$ an irreducible component of $X'$ through $x$. Then $\dim X''-\dim Y=\dim X-1-\dim Y=n-1$ and for a general point $y\in Y$, the fiber $F_y$ at $y$ intersects $X''$ properly. So the morphism $\pi:X''\rightarrow Y$ is dominant. We conclude by the induction hypothesis.

\endproof

We are now in position to prove

\begin{pro}\label{am} Let $X$ be a variety over an algebraically closed field $\mathbf{k}$ of characteristic $0$, $f:X\dashrightarrow X$ a birational map, then there is a point $x\in X$ such that $f^n(x)\in X-I(f)$ for any $n\in \mathbb{Z}$ and $\{f^{n}(x)|n\in \mathbb{Z}\}$ is infinite.
\end{pro}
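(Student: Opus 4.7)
The strategy is to spread $X$ and $f$ out over a finitely generated $\mathbb{Q}$-subring $R\subset \mathbf{k}$, apply Amerik's Theorem \ref{amerik} on a closed fiber (whose residue field is a number field, hence embeds into $\overline{\mathbb{Q}}$), and then lift the resulting $\overline{\mathbb{Q}}$-point back to an $\mathbf{k}$-point of $X$ by means of Lemma \ref{section}.

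More precisely, choose a finitely generated $\mathbb{Q}$-subalgebra $R\subset\mathbf{k}$ together with a smooth projective morphism $\pi:\mathbf{X}\to\Spec R$ and a birational $R$-map $\mathbf{f}:\mathbf{X}\dashrightarrow\mathbf{X}$ whose base change to $\mathbf{k}$ recovers $(X,f)$; after possibly shrinking $\Spec R$ we may assume that $\mathbf{f}$ restricts to a birational self-map $f_{\mathfrak{p}}$ of every fiber $X_{\mathfrak{p}}$. Since $R$ is a Jacobson ring finitely generated over $\mathbb{Q}$, every closed point $\mathfrak{m}\in \Spec R$ has residue field a number field; fix such an $\mathfrak{m}$ together with an embedding $k(\mathfrak{m})\hookrightarrow\overline{\mathbb{Q}}$. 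Applying Theorem \ref{amerik} to $X_{\mathfrak{m}}\times_{k(\mathfrak{m})}\overline{\mathbb{Q}}$ together with $f_{\mathfrak{m}}$ produces a point $x_0\in X_{\mathfrak{m}}(\overline{\mathbb{Q}})$ whose entire $f_{\mathfrak{m}}$-orbit is defined and infinite.

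Regard $x_0$ as a closed point of $\mathbf{X}$ above $\mathfrak{m}$. Applying Lemma \ref{section} to $\pi:\mathbf{X}\to\Spec R$ and to $x_0$, there exists an irreducible subvariety $S\subset\mathbf{X}$ through $x_0$ with $\dim S=\dim\Spec R$ and with $\pi|_S:S\to\Spec R$ dominant (hence generically finite). Let $\xi$ be the generic point of $S$; its residue field $k(\xi)$ is a finite extension of $\Frac R\subset\mathbf{k}$, so it embeds into the algebraically closed field $\mathbf{k}$. Fixing such an embedding yields a $\mathbf{k}$-point $x\in X(\mathbf{k})$.

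It remains to verify that $x$ has a well-defined infinite orbit avoiding $I(f)$. Since $x_0\in S=\overline{\{\xi\}}$, the point $\xi$ specializes to $x_0$, so the orbit of $x$ (equivalently of $\xi$) specializes to the orbit of $x_0$. If the orbit of $x$ were finite, say $f^N(x)=x$, then the analogous relation would force $f_{\mathfrak{m}}^N(x_0)=x_0$ on the special fiber, contradicting Amerik. The main obstacle is the defined-orbit claim: the containment $I(f_{\mathfrak{m}}^n)\subseteq I(\mathbf{f}^n)\cap X_{\mathfrak{m}}$ is automatic, but the reverse inclusion can fail, so knowing $x_0\notin I(f_{\mathfrak{m}}^n)$ does not immediately give $x_0\notin I(\mathbf{f}^n)$. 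To handle this, one works with the graphs $\Gamma_{\mathbf{f}^n}\subset\mathbf{X}\times_R\mathbf{X}$: the fiber $(\Gamma_{\mathbf{f}^n})_{\mathfrak{m}}$ decomposes as $\Gamma_{f_{\mathfrak{m}}^n}$ together with possibly extra components supported over $I(f_{\mathfrak{m}}^n)$; since $x_0\notin I(f_{\mathfrak{m}}^n)$, the projection $\Gamma_{\mathbf{f}^n}\to\mathbf{X}$ is an isomorphism near the point $(x_0,f_{\mathfrak{m}}^n(x_0))$, so $\mathbf{f}^n$ is defined on a Zariski-open neighborhood of $x_0$ in $\mathbf{X}$, which contains the generic point $\xi$ of $S$. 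Hence $\xi\notin I(\mathbf{f}^n)$ for all $n$, and by base change the same holds for $x$, completing the verification.
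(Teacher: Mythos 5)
Your proposal follows essentially the same route as the paper: spread $(X,f)$ out over a finitely generated subring $R$, apply Amerik's theorem on a closed fiber (the paper takes $R$ finitely generated over $\overline{\mathbb{Q}}$ so the residue field is already $\overline{\mathbb{Q}}$, while you work over $\mathbb{Q}$ and base-change, an immaterial difference), and lift the point via Lemma \ref{section} by taking the generic point of the resulting multisection $S$. Your closing verification (well-definedness of the orbit of $\xi$ via the graphs $\Gamma_{\mathbf{f}^n}$, and infinitude by specialization to $x_0$) is in fact more detailed than the paper, which simply asserts these properties of the generic point of $S$.
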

\proof
In the case $\mathbf{k}=\overline{\mathbb{Q}}$, it is just Theorem \ref{amerik}.

In the general case, there is a subring $R$ of $\mathbf{k}$ which is finitely generated over $\overline{\mathbb{Q}}$ on which $X$ and $f$ are defined. And we may assume that $\mathbf{k}$ is $\overline{\Frac(R)}$. Then we may pick a model $\pi:X_R\rightarrow \Spec R$ as a scheme over $R$ and $f_R$ as a birational map of $X_R$ over $R$ such that the geometric generic fiber of $X_R$ is $X$ and the restriction of $f_R$ on $X$ is $f$. Pick any $\mathfrak{m}\in R$ such that the restriction $f_{\mathfrak{m}}$ of $f$ on the special fiber $X_{\mathfrak{m}}$ at $\mathfrak{m}$ is birational. Since $R/\mathfrak{m}=\overline{\mathbb{Q}}$, by Theorem \ref{amerik}, there is a point $y\in X_{\mathfrak{m}}$ such that $f_{\mathfrak{m}}^n(y)\in X_{\mathfrak{m}}-I(f_{\mathfrak{m}})$ for any $n\in \mathbb{Z}$ and $\{f_{\mathfrak{m}}^{n}(y)|n\in \mathbb{Z}\}$ is infinite. By Lemma \ref{section}, we get an irreducible subvariety $S$ of $X$ such that $\dim S=\dim R$, and the restriction of $\pi$ to $S$ is dominant to $\Spec R$. Let $x$ be the generic point of $S$, then $x\in X_R(\overline{\Frac R})=X$. We have $f^n(x)\in X-I(f)$ for any $n\in \mathbb{Z}$ and $\{f^{n}(x)|n\in \mathbb{Z}\}$ is infinite.
\endproof

\begin{rem}Proposition \ref{am} can not be true over $\overline{\mathbb{F}_p}$, since for any $q=p^n$, $\# X(\mathbb{F}_q)$ is finite.

\end{rem}

\proof[Proof of Theorem \ref{coram}]
By Theorem \ref{invarianthypersurface}, there are only finitely many invariant curves of $f$. Let $C$ be the union of these curves and let $U=X-C$. Then by Corollary \ref{am}, there is a non-preperiodic point $x\in U$ such that $f^n(x)\in U-I(f)$ for any $n\in \mathbb{Z}$. Let $S$ be the Zariski closure of $\{f^{n}(x)|n\in \mathbb{Z}\}$ in $X$. If $S\neq X$, then $\dim S=1$ since it is infinite. Let $D$ be the union of curves in $S$, then $D$ is an invariant curve. Then $D\bigcap U\neq \emptyset$. It contradicts the definition of $U$.
\endproof

\section{Automorphisms  and  Saito's fixed point formula}\label{Automorphisms>1}
In this subsection, we denote by $\mathbf{k}$ an algebraically closed field of characteristic $0$.

\begin{thm}\label{cisop1}(=Theorem \ref{cisop}) Let $X$ be a projective smooth surface over an algebraically closed field $\mathbf{k}$ of characteristic $0$. And let $f:X\rightarrow X$ be a nontrivial $automorphism$ with $\lambda_1(f)>1$. Denote by $\#\Per_n(X)$ the number of isolated periodic points of period $n$ counted with multiplicities. Then, we have $$\#\Per_n(f)\sim \lambda_1(f)^n.$$
\end{thm}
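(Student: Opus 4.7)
The plan is to apply Saito's fixed-point formula to each iterate of $f$, which refines the Lefschetz formula by isolating the contribution of isolated fixed points (counted with the same local multiplicity used in the theorem statement) from that of positive-dimensional fixed components. Concretely, for any automorphism $g$ of a smooth projective surface one has
\[
\#\{\text{isolated fixed points of } g \text{ with multiplicity}\} \;=\; L(g) \;-\; \sum_{C \subset \fix(g),\ \dim C = 1} \mathrm{cont}(C,g),
\]
where $\mathrm{cont}(C,g)$ is a local term depending on $\chi(C)$, $(C^2)$, and the action of $g^*$ on the normal bundle of $C$. Applied to $g = f^n$, this expresses $\#\Per_n(f)$ as a Lefschetz number minus a curve correction, and the theorem reduces to estimating the two pieces as $n \to \infty$.

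For the Lefschetz number, Poincar\'e duality gives $L(f^n) = 2 + \mathrm{tr}(f^{n*}|H^2) - 2\,\mathrm{tr}(f^{n*}|H^1)$. The key cohomological input is that the spectral radius of $f^*$ on $H^2$ equals $\lambda_1(f)$ and is a \emph{simple} eigenvalue: since $f^*$ preserves the intersection form of signature $(1, h^{1,1}-1)$ on $H^{1,1}$ and acts by isometries on $H^{2,0} \oplus H^{0,2}$, the Hodge index theorem forces eigenvalues on $H^{1,1}$ to pair up as $\{\lambda, \lambda^{-1}\}$, with a single pair $\{\lambda_1(f), \lambda_1(f)^{-1}\}$ lying off the unit circle and all other eigenvalues on $H^2$ of modulus $1$. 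Hence $\mathrm{tr}(f^{n*}|H^2) = \lambda_1(f)^n + O(1)$. For the $H^1$ contribution I would invoke the classification of smooth projective surfaces carrying an automorphism with $\lambda_1 > 1$ (rational, K3, Enriques, or abelian): in the first three cases $H^1 = 0$, while in the abelian case the eigenvalues of $f^*$ on $H^1$ have modulus at most $\lambda_1(f)^{1/2}$ so $\mathrm{tr}(f^{n*}|H^1) = O(\lambda_1(f)^{n/2})$. Combining, $L(f^n) = \lambda_1(f)^n + o(\lambda_1(f)^n)$.

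Finally, I would control the curve correction via Theorem \ref{finite}: $f$ admits only finitely many periodic curves $C_1,\dots,C_k$, and every positive-dimensional component of $\fix(f^n)$ is one of them. After replacing $f$ by a fixed iterate we may assume each $C_j$ is $f$-invariant, and $\mathrm{cont}(C_j, f^n)$ is then a polynomial expression in $n$ whose coefficients depend on $\chi(C_j)$, $(C_j^2)$, and the eigenvalue $\alpha_j$ of $df$ on the normal bundle $N_{C_j/X}$. The main obstacle is to show that $|\alpha_j| = 1$, so the total curve correction grows only polynomially and is absorbed in $o(\lambda_1(f)^n)$. This will again follow from the Hodge index theorem: the class $[C_j]$ is $f^*$-invariant and lies in the orthogonal complement of the dominant eigenclass of $f^*$ on $\mathrm{NS}(X)_\R$, a subspace on which the intersection form is negative definite and on which $f^*$ must act as an isometry, forcing all eigenvalues of $f^*$ along $C_j$ (including $\alpha_j$) to have modulus one. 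Combining both estimates yields $\#\Per_n(f) = \lambda_1(f)^n + o(\lambda_1(f)^n)$, as required.
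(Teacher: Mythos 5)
Your overall route is the same as the paper's: combine Saito's fixed point formula with Cantat's finiteness of periodic curves (Theorem \ref{finite}) to write $\#\Per_n(f)$ as $L(f^n)$ minus curve corrections, and show $L(f^n)\sim\lambda_1(f)^n$ from the spectral structure of $f^*$ on cohomology. Your treatment of $L(f^n)$ is fine (the paper simply cites Lemma 7.8 of \cite{Iwasaki2010} for this). The gap is in the curve correction.

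You assert that $\mathrm{cont}(C_j,f^n)$ is ``a polynomial expression in $n$'' and identify the ``main obstacle'' as showing that the transverse eigenvalue $\alpha_j$ of $df$ on $N_{C_j/X}$ has modulus one, to be proved by Hodge index on $\mathrm{NS}(X)_\R$. This misidentifies the difficulty on both counts. First, $\alpha_j$ is a local eigenvalue of the differential at points of $C_j$, not an eigenvalue of $f^*$ on the N\'eron--Severi group; the fact that $[C_j]$ sits in the negative definite orthogonal complement of the dominant eigenclass tells you that $f^*$ permutes the classes $[C_j]$ with eigenvalues that are roots of unity, but says nothing about $df$ on the normal bundle. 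Second, the modulus of $\alpha_j$ is in fact irrelevant: what must be controlled are the local multiplicities entering Saito's formula, namely $v_C(f^n)$, $\mu_{x,C}(f^n)$ and, at points where fixed curves meet, $\delta(f^n,x)=\dim\hat{\sO}_{X,x}/(h_{1,n},h_{2,n})$. There is no a priori reason these are polynomial in $n$, and this is where the real work lies. The paper shows they are actually \emph{constant} in $n$ (after replacing $f$ by a fixed iterate): when $\alpha_j$ is not a root of unity this is a direct computation (Proposition \ref{pro1}); when $f$ is tangent to the identity along $C_j$ it requires writing $f=\exp(\partial)$ for a formal vector field $\partial$, factoring out the fixed locus, and comparing Hilbert--Samuel multiplicities of the ideals $\langle h_1,h_2\rangle$ and $\langle \tilde a,\tilde b\rangle$ via integral closure (Proposition \ref{pro2}). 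You would also need the reduction to the case where the union of periodic curves has simple normal crossings (the paper's blow-up Step 2), since Saito's formula as used requires smooth fixed components. Without these ingredients the estimate $|\#\Per_n(f)-L(f^n)|=O(1)$ (or even $o(\lambda_1^n)$) is not established.
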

This theorem follows from our next result.
\begin{thm}\label{autocount}Let $X$ be a smooth projective surface over $\mathbb{C}$ and $f$ be an automorphism of $X$ with $\lambda_1(f)>1$. Denote by $L(f^n):=\sum_i (-1)^i \Tr[f^{n*}:H^i(X)\rightarrow H^i(X)]$ the Lefschetz number of $f^n$. Then, we have  $$|\#\Per_n(X)-L(f^n)|=O(1).$$
\end{thm}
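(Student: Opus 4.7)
The plan is to apply Saito's fixed-point formula \cite{Saito1987} to each iterate $f^n$ and then use the rigidity of periodic curves that comes from $\lambda_1(f)>1$ to control the curve correction uniformly in $n$.

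Step 1 (Saito's formula). For an automorphism $g$ of a smooth projective complex surface, Saito's formula refines Lefschetz' formula in the presence of positive-dimensional fixed loci. It expresses
$$L(g) \;=\; \sum_{p \in \mathrm{Fix}(g)^{\mathrm{iso}}} \mu_p(g) \;+\; \sum_{C} \nu(C, g),$$
where the first sum ranges over isolated fixed points with their local Lefschetz multiplicities $\mu_p(g)$ (which is exactly how $\#\Per_n(f)$ is counted), and the second sum ranges over the irreducible curves $C\subset X$ fixed pointwise by $g$. The local invariant $\nu(C,g)$ is an explicit rational function of $\chi(C)$, $C^2$, and the eigenvalue $\alpha_C(g)$ of $Dg$ on the normal bundle $N_{C/X}$, with a pole only at $\alpha_C=1$.

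Step 2 (Applying the formula with $g=f^n$). Taking $g=f^n$, the isolated contribution is exactly $\#\Per_n(f)$, so
$$L(f^n) - \#\Per_n(f) \;=\; \sum_{C:\,f^n|_C=\mathrm{id}_C} \nu(C, f^n).$$
It therefore suffices to bound this sum independently of $n$.

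Step 3 (Finiteness and uniform bound). Any curve $C$ that is pointwise fixed by some $f^n$ is in particular an $f$-periodic curve. Since $\lambda_1(f)>1$, Theorem \ref{finite} furnishes a finite list $C_1,\ldots,C_k$ of periodic irreducible curves, so the sum above has at most $k$ terms for every $n$. To bound each term, fix $C=C_i$ with $f$-period $m$; then $f^m$ acts on $C$ by an automorphism of finite order (otherwise $C$ would never be pointwise fixed), and on the normal line $N_{C}$ by multiplication by some $\alpha\in\mathbb{C}^\times$. As $n$ ranges over integers with $f^n|_C=\mathrm{id}_C$, the quantity $\alpha_C(f^n)$ lies in the finite set $\{\alpha^{j}\mid j=1,\ldots,\text{ord}(\alpha)\}$ when $|\alpha|=1$, and otherwise tends to $0$ or $\infty$; in both regimes Saito's rational expression is bounded, provided $\alpha_C(f^n)\neq 1$.

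Step 4 (Excluding the pole). The main obstacle is thus ruling out $\alpha_C(f^n)=1$. Were this to hold, $f^n$ would linearize to the identity along $C$, so an iterate of $f$ would act trivially on the first infinitesimal neighborhood of $C$; this would yield a non-trivial $f^*$-invariant class in $N^1(X)_{\mathbb R}$ supported near $C$ orthogonal to the $\lambda_1$-eigendirection, contradicting the spectral gap of $f^*$ on $H^{1,1}(X)$ which is implied by $\lambda_1(f)>1$ via Hodge index (Proposition \ref{hodge index} applied to the eigenvectors for $\lambda_1$ and $\lambda_1^{-1}$). Hence the pole is avoided and each $\nu(C,f^n)$ satisfies $|\nu(C,f^n)|\le K$ for a constant $K$ independent of $n$. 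Summing over the finitely many $C_i$ gives $|L(f^n)-\#\Per_n(f)|=O(1)$, as required. The deduction of Theorem \ref{cisop1} is then a separate short argument: $L(f^n)=2+\mathrm{Tr}(f^{n*}|H^2)\sim\lambda_1(f)^n$ because the spectral radius of $f^*$ on $H^{1,1}$ equals $\lambda_1(f)$ and is a simple eigenvalue (the other eigenvalues having strictly smaller modulus, apart from $\lambda_1^{-1}$).
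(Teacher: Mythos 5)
Your Step 4 contains the decisive error, and it happens to sit exactly where the paper does all of its work. You claim that $\alpha_C(f^n)=1$ (i.e.\ $f^n$ tangent to the identity along a pointwise fixed curve $C$) is impossible when $\lambda_1(f)>1$, because it would produce an $f^*$-invariant class in $N^1(X)_{\mathbb R}$ contradicting a ``spectral gap''. There is no such contradiction: $f^*$ acting on $N^1(X)_{\mathbb R}$ can perfectly well have eigenvalue $1$ when $\lambda_1(f)>1$ --- the class of any $f$-invariant curve is already a fixed vector of $f^*$, and the Hodge index theorem only forces such a class (if orthogonal to the isotropic $\lambda_1^{\pm 1}$-directions) to have negative self-intersection, not to vanish. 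Concretely, along a pointwise fixed curve the normal eigenvalue equals $\det df$, which is governed by the action on $K_X$ and can equal $1$ (e.g.\ for automorphisms preserving a holomorphic $2$-form). So the tangent-to-identity case cannot be excluded; it must be analyzed. That analysis is precisely the content of the paper's Proposition \ref{pro2}: writing $f=\exp(\partial)$ for a formal vector field $\partial$, factoring out the fixed-curve equation $g=z_1^{n_1}z_2^{n_2}$, and showing via an integral-closure argument that $\delta(f^n,x)$, $v_C(f^n)$ and $\mu_{x,C}(f^n)$ are all independent of $n$. Your proposal skips this entirely.

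A secondary but related gap: the version of Saito's formula you invoke, with a single rational correction $\nu(C,g)$ in the normal eigenvalue having a pole at $\alpha_C=1$, is only the nondegenerate case. The formula actually needed (the one from Iwasaki--Uehara quoted in the paper) distinguishes curves of type I and type II, involves the local invariants $\delta$, $v_C$, $\mu_{C,x}$, and requires the components of $\fix(f^n)$ to be smooth; the paper arranges this by blowing up the singular points of the periodic curves and checking that both $L(f^n)$ and $\#\Per_n$ change by $O(1)$ (its Step 2). Your proposal does not address singular or non-SNC fixed loci at all. The parts of your argument that do survive are the use of Theorem \ref{finite} to get a finite list of periodic curves and the boundedness of the curve contributions when the normal eigenvalue is not a root of unity --- this matches the paper's Proposition \ref{pro1} --- but the core case is missing.
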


Observe that any birational map with $\lambda_1>1$ on a non-rational surface is birationally conjugated to an automorphism \cite{favre}.

Our proof of Theorem \ref{autocount} is based on a version of Lefschetz's formula due to Saito \cite{Saito1987} which was subsequently used by Iwasaki and Uehara in \cite{Iwasaki2010}.

\begin{rmk}
Theorem \ref{cisop1}  gives an alternative to Hrushovski Fakruddin's method to prove the Zariski density of periodic points for automorphisms. Indeed, let $Z$ be the Zariski closure of the set of all periodic points.  By the previous theorem, $Z$ is either a curve or $Z=X$. But $Z$ is $f$-invariant, hence $Z=X$ by Cantat's theorem.
\end{rmk}

\proof[Proof of Theorem \ref{cisop1}]
Suppose $\mathbf{k}=\mathbb{C}.$ Then by [Lemma 7.8,\cite{Iwasaki2010}], we have $L(f^n)\sim\lambda_1(f)^n.$ Since by Theorem \ref{autocount}, we have $$\#\Per_n(f)\sim L(f)^n,$$ it follows that $$\#\Per_n(f)\sim \lambda_1(f)^n,$$ as required.

In the general case, since there are only finitely many coefficients in the definition of $X$ and $f$, we may always assume that the transcendence degree of $\mathbf{k}$ over $\overline{\mathbb{Q}}$ is finite. We may then embedd $\mathbf{k}$ in $\mathbb{C}$, and apply the preceding argument.
\endproof

\subsection{Local invariants associated to fixed points}\label{sbslsf}
Let us  recall the ingredients appearing in Saito's formula. We refer to~\cite{Iwasaki2010} for more details.

We fix  $f:X\rightarrow X$ be an automorphism of a projective smooth  surface, and  denote by $\fix(f)$ the set of fixed points of $f$.

Pick any $x \in \fix(f)$, and write $\mathfrak{m}\subset \hat{\sO}_{X,x}$ for the maximal ideal
in the completion of the local ring at $x$. Since $X$ is smooth, $ \hat{\sO}_{X,x}\simeq \C [[z_1, z_2]]$ for some coordinates $z_1, z_2$, and $\mathfrak{m}$ is the ideal generated by $z_1, z_2$. We may then write
$$f(z_1, z_2)=( z_1+gh_1,  z_2+gh_2) $$
for some elements $g,h_1,h_2\in \mathfrak{m}$, where $g\neq 0$ and $h_1,h_2$ are relatively prime.  The first important invariant is
\begin{equation}\label{eq:def-delta}
\delta(f,x):=\dim_{\mathbb{C}}\hat{\sO}_{X,x} / f^* \mathfrak{m} =
\dim_{\mathbb{C}} \C[[z_1,z_2]] / (h_1,h_2).
\end{equation}
Observe that $ \delta$ is finite.

Next denote by $\Lambda(f,x)$ the irreducible components of the fixed point locus of $f$ at $x$, and pick $C \in \Lambda(f,x)$ any irreducible component.   Then we set
\begin{equation}\label{eq:def-vc}
v_C(f)=\ord_C(g)
\end{equation}
Note that
given  any \emph{reduced} equation  $h \in \mathfrak{m}$  of $C$, we have
$v_C(f)=\max\{m\in \mathbb{N}|\, h^m \text{ divides } g\} = \min\{ \ord_C ( \phi \circ f -\phi),\, \phi \in \mathfrak{m}\}$. In particular this quantity is independent on the choice of coordinates.

Let us introduce the holomorphic $1$-form $$\omega_{f,x}:=h_2dz_1-h_1dz_2.$$
A smooth curve $C\in \Lambda(f,x)$ is said to be:
\begin{itemize}
\item
of type I if the restriction $\omega_{f,x}|C \in \Omega^1_C$ is non zero;
\item
of type II if $\omega_{f,x}|C$ vanishes identically.
\end{itemize}
Observe that the form $\omega_{f,x}$ depends on the choice of coordinates, but  the type
of a curve does not. It is also independent on  the choice of a point $x$ on the curve.
We shall denote by  $X_\mathrm{I}(f)$ (resp. $X_\mathrm{II}(f)$) the set of irreducible curves
that are fixed by $f$ and of type I (resp. of type II).
Suppose $C\in \Lambda(f,x)$ is \emph{smooth}.
If it is of type I, then we set
\begin{equation}
\mu_{C,x}(f):=\ord_x(\omega_{f,x}|C).
\end{equation}
If $s:\mathbb{C}[[t]]\rightarrow C$ is any  local parametrization of $C$ at $x$, then we have $\mu_{C,x}(f)=\ord_t(s^*\omega_{f,x})$.
If $C$ is of type II, define
\begin{equation}
\mu_{C,x}(f):=\ord_x(\partial_{f,x}|C),
\end{equation}
where $\partial_{f,x} = h_1 \partial_{z_1} + h_2 \partial_{z_2}$. In terms of the $1$-form, $\omega_{f,x}$ this multiplicity can be interpreted  as follows. By assumption there exists $a\in \mathfrak{m}$ such that $a|C\neq 0$ and $\omega_{f,x}-adh$ is divisible by $h$. Then we have $\mu_{C,x}(f):=\ord_0(a)$. We leave to the reader to check that these quantities are independent on the choice of coordinates.

\begin{pro}\label{pro1}
Suppose $f: X \to X$ fixes a smooth
curve $C$ pointwise. Pick $x \in C$ and assume $df(x)$ has one eigenvalue $\lambda$ which is not a root of unity. Then $C$ is of type $I$, and we have
\begin{equation}\label{eq:easycase}
v_C(f^n) =1, \, \text{ and } \delta(f^n,x) =  \mu_{x,C}(f^n) =0
\end{equation}
for all $n \in \Z - \{ 0 \}$.
\end{pro}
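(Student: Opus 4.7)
The plan is a local calculation in suitable formal coordinates at $x$, in which the statement becomes transparent because the hypothesis on $\lambda$ forces a key series to be a unit.

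\textbf{Step 1: Choice of coordinates.} Since $C$ is smooth and passes through $x$, I choose formal coordinates $(z_1, z_2)$ on $\hat{\sO}_{X,x} \cong \mathbf{k}[[z_1,z_2]]$ such that $C = \{z_2 = 0\}$. Because $f$ fixes $C$ pointwise, the series $f(z_1,z_2) - (z_1,z_2)$ vanishes on $C$ and is therefore divisible by $z_2$. I can thus write
\[
f(z_1,z_2) = \bigl(z_1 + z_2\, a(z_1,z_2),\ z_2\, b(z_1,z_2)\bigr)
\]
for some $a,b \in \mathbf{k}[[z_1,z_2]]$. Computing $df(x)$ in this basis gives an upper triangular matrix with diagonal entries $1$ and $b(0,0)$, so under our hypothesis we have $b(0,0) = \lambda$, which is not a root of unity; in particular $b(0,0) \neq 1$.

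\textbf{Step 2: Iteration.} I claim by induction that for every $n \in \Z \setminus \{0\}$ one has
\[
f^n(z_1,z_2) = \bigl(z_1 + z_2\, a_n(z_1,z_2),\ z_2\, b_n(z_1,z_2)\bigr)
\]
with $b_n(0,0) = \lambda^n$. The form of the iterate follows from the fact that $\{z_2 = 0\}$ is still fixed pointwise, and the value at the origin is read from $df^n(x)$, whose non-trivial eigenvalue is $\lambda^n$. Since $\lambda$ is not a root of unity, $b_n(0,0) - 1 = \lambda^n - 1 \neq 0$, so $b_n - 1$ is a unit in $\mathbf{k}[[z_1,z_2]]$.

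\textbf{Step 3: Reading off the invariants.} Writing $f^n$ in the normal form $(z_1 + g h_1, z_2 + g h_2)$ of Section~\ref{sbslsf}, I have $g h_1 = z_2 a_n$ and $g h_2 = z_2(b_n - 1)$. Since $b_n - 1$ is a unit, the greatest common factor of $z_2 a_n$ and $z_2(b_n - 1)$ is exactly $z_2$, so I take $g = z_2$, $h_1 = a_n$, $h_2 = b_n - 1$. The identity~\eqref{eq:def-vc} gives $v_C(f^n) = \ord_C(z_2) = 1$. The ideal $(h_1,h_2)$ contains the unit $h_2$, so $(h_1,h_2) = \hat{\sO}_{X,x}$ and therefore $\delta(f^n,x) = 0$ by~\eqref{eq:def-delta}. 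Finally,
\[
\omega_{f^n,x} = h_2\, dz_1 - h_1\, dz_2 = (b_n - 1)\, dz_1 - a_n\, dz_2,
\]
and its restriction to $C$ is $(b_n-1)(z_1,0)\, dz_1$, which is non-zero at $x$ since $(b_n-1)(0,0) = \lambda^n - 1 \neq 0$. Hence $C$ is of type I, and $\mu_{C,x}(f^n) = \ord_x(\omega_{f^n,x}|_C) = 0$.

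There is essentially no obstacle here: the whole argument rests on the observation that the hypothesis $\lambda^n \neq 1$ for $n \neq 0$ turns the diagonal component $b_n - 1$ into a unit, which simultaneously kills $\delta$ and $\mu_{C,x}$ and fixes the multiplicity along $C$ at $1$.
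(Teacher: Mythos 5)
Your proof is correct and follows essentially the same route as the paper: choose formal coordinates adapted to $C$, write $f^n$ in triangular form, and observe that the hypothesis $\lambda^n\neq 1$ makes the transverse factor a unit, which immediately yields $v_C(f^n)=1$, $\delta(f^n,x)=\mu_{x,C}(f^n)=0$, and type I. The paper states the same computation more tersely (with the roles of $z_1,z_2$ swapped); you have merely filled in the details it declares to "follow immediately from the definitions."
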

\begin{proof}
Locally we may choose coordinates $z_1,z_2$ such that $C = \{ z_1 =0 \}$, and then we have
$ f (z_1, z_2) = ( z_ 1 + z_1 ( \lambda -1 + o(1)), z_2 + z_1 h_2)$ for some power series $ h_2$.
The computations of $v_C(f), \mu_{x,C}(f)$ and $\delta(f^n,x)$ then follow immediately from the definitions.
\end{proof}

\begin{pro}\label{pro2}
Suppose $f: X \to X$ fixes a point $x$, and assume $df(x) = \id$.
If the set of all periodic points has simple normal crossing singularities at $x$,
then we have
\begin{equation}\label{eq:id}
\delta(f^n,x)  =  \delta (f,x), \,
v_C(f^n) =  v_C(f), \text { and }
\mu_{x,C}(f^n) = \mu_{x,C}(f) ~,
\end{equation}
for all $n\in \Z - \{ 0 \}$.
\end{pro}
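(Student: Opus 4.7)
The natural approach is to linearize $f$ formally. Since $\mathrm{char}\,\mathbf{k} = 0$ and $df(x) = \id$, the formal series $X := \log f = (f-\id) - \tfrac{1}{2}(f-\id)^2 + \cdots$ converges in the $\mathfrak{m}$-adic topology to a derivation of $\hat{\mathcal{O}}_{X,x}$ vanishing to order at least two at $x$, and $f = \exp X$, so $f^n = \exp(nX)$ for every $n \in \mathbb{Z}$. Write $X = X_1 \partial_1 + X_2 \partial_2$, set $G := \gcd(X_1, X_2)$, and factor $X = G \tilde X$ with $\tilde X = H_1 \partial_1 + H_2 \partial_2$ and $H_1, H_2$ coprime. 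Direct iteration of $\exp(nX)$ gives $f^{n*}(z_j) - z_j = G \cdot \Psi_j^{(n)}$ where an induction using $X^k(X_j) = G\,Q_k^{(j)}$ with $Q_k^{(j)} \equiv \tilde X(G)^k H_j \pmod G$ yields $\Psi_j^{(n)} \equiv u_n H_j \pmod G$ with $u_n := (e^{n \tilde X(G)} - 1)/\tilde X(G) = n + \tfrac{n^2}{2}\tilde X(G) + \cdots$ a unit. Since a formal curve through $x$ is pointwise fixed by $\exp(tX)$ (for $t \neq 0$) iff $X$ vanishes on it, the fixed curve germ of $f^n$ at $x$ equals that of $f$, and a short argument using coprimality of $H_1, H_2$ forces $\Psi_1^{(n)}, \Psi_2^{(n)}$ themselves to be coprime: any common irreducible factor $p$ would define an $f^n$-fixed, hence $f$-fixed, curve, so $p\mid G$; but then modulo $G$ we would get $p\mid u_n H_1, u_n H_2$, contradicting coprimality. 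Hence the ``$g$'' for $f^n$ equals $G$ up to a unit, yielding $v_C(f^n) = \ord_C G = v_C(f)$.

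Now bring in the SNC hypothesis to pick coordinates where, up to a unit, $G$ is the monomial $z_1^a$ (smooth case) or $z_1^a z_2^b$ (nodal case). The $1$-form $\omega_{f^n, x} = \Psi_2^{(n)} dz_1 - \Psi_1^{(n)} dz_2$ decomposes as $u_n (H_2 dz_1 - H_1 dz_2) + G \cdot (R_2^{(n)} dz_1 - R_1^{(n)} dz_2)$, and similarly $\partial_{f^n,x} = u_n \tilde X + G \cdot (\cdots)$; restricting to any fixed curve $C \subset V(G)$ kills the $G$-term and leaves a unit $u_n|_C$ times the analogous object for $f$. Since $u_n|_C(x) = n \neq 0$, this is a unit in $\hat{\mathcal{O}}_{C,x}$, so the orders of vanishing along $C$ agree in either type, giving $\mu_{C,x}(f^n) = \mu_{C,x}(f)$. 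For $\delta$, the same congruence shows $(\Psi_1^{(n)}, \Psi_2^{(n)}) + (G) = (H_1, H_2) + (G)$ independently of $n$, so $\dim \hat{\mathcal{O}}/(I_n + (G))$ is independent of $n$. One then uses the exact sequence $0 \to \hat{\mathcal{O}}/(I_n : G) \to \hat{\mathcal{O}}/I_n \to \hat{\mathcal{O}}/(I_n + (G)) \to 0$ together with the monomial form of $G$, iterating against the successive colon ideals $(I_n : G^k)$, to reduce the length of $\hat{\mathcal{O}}/I_n$ to a sum of quotients of the form $\hat{\mathcal{O}}/((I_n : G^k) + (G))$, each of which is independent of $n$ by the same unit-factor argument; summing yields $\delta(f^n, x) = \delta(f, x)$.

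The main obstacle is the $\delta$-invariance: although modulo $G$ the generators $\Psi_j^{(n)}$ differ from $H_j$ only by the unit $u_n$, the higher-order terms of $\Psi_j^{(n)}$ depend genuinely on $n$ and the ideals $(\Psi_1^{(n)}, \Psi_2^{(n)})$ need not coincide literally with $(\Psi_1^{(1)}, \Psi_2^{(1)})$. The SNC hypothesis on the fixed curves is the key ingredient that pins $G$ down to a monomial, allowing the iterative colength computation against powers of $G$ to go through cleanly in both the smooth case $G = z_1^a$ and the nodal case $G = z_1^a z_2^b$.
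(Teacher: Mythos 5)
Your overall strategy coincides with the paper's: write $f=\exp(\partial)$ for the formal vector field $\partial=\log f$ (so $f^n=\exp(n\partial)$) and compare the local data of $f^n$ with that of $f$ through the factorization of $\partial$ by the gcd $G$ of its components. The $v_C$ and $\mu_{x,C}$ parts of your argument are essentially sound, and your derivation of the congruence $\Psi_j^{(n)}\equiv u_nH_j \pmod G$ with $u_n$ a unit is a reasonable substitute for the paper's induction on homogeneous components. One secondary point: the step ``$f^n$-fixed, hence $f$-fixed, so $p\mid G$'' rests on the unproved assertion that a curve pointwise fixed by $\exp(t\partial)$, $t\neq 0$, must lie in the zero locus of $\partial$. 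This is true but needs an argument (for instance, the components of $\exp(t\partial)-\id$ are congruent to $t X_j$ modulo $\mathfrak{m}\cdot(X_1,X_2)$, so the fixed ideal equals $(X_1,X_2)$ by Nakayama); as written, the coprimality claim is not fully justified.

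The genuine gap is in the $\delta$-invariance. Your congruence controls $I_n=(\Psi_1^{(n)},\Psi_2^{(n)})$ only modulo $(G)$, which gives $I_n+(G)=(H_1,H_2)+(G)$; but the colon ideals $(I_n:G^k)$, $k\ge 1$, over which your length computation iterates are \emph{not} determined by $I_n$ modulo $(G)$ --- they depend on the actual generators $u_nH_j+GR_j^{(n)}$, hence on the $n$-dependent remainders $R_j^{(n)}$, and the asserted ``same unit-factor argument'' says nothing about $(I_n:G^k)+(G)$ for $k\ge 1$. So the claim that each summand in your colength decomposition is independent of $n$ is unjustified. What saves the statement is a finer congruence than yours: each $\partial^{k}(X_j)/G$ with $k\ge 1$ lies in $\mathfrak{m}\cdot(H_1,H_2)$, so $\Psi_j^{(n)}-nH_j\in \mathfrak{m}\cdot(H_1,H_2)$, whence $I_n+\mathfrak{m}(H_1,H_2)=(H_1,H_2)$ and $I_n\subseteq (H_1,H_2)$, giving $I_n=(H_1,H_2)$ by Nakayama and hence $\delta(f^n,x)=\delta(f,x)$ outright. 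The paper reaches the same conclusion by yet another route: it shows $(h_1,h_2)\subseteq(\tilde a,\tilde b)$ and that the two ideals have the same integral closure (comparing orders along the exceptional divisors of a principalizing sequence of blow-ups), and then invokes equality of Hilbert--Samuel multiplicities. Either of these replaces your colon-ideal reduction, which does not go through as written.
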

\begin{proof}
Since $f$ is tangent to the identity then there exists a unique formal vector field $\partial$ vanishing up to order $2$ and such that $\exp(\partial)=f$. Let us recall this vector field is constructed, see for instance~\cite{F.E.Brochero}.

 Choose coordinates $z_1, z_2$ and write $$f
 = (z_1 + g h_1, z_2 + gh_2) =
   (z_1 + \sum_{n\ge 2} p_n(z_1, z_2),
 z_2 + \sum_{n\ge 2} q_n(z_1, z_2))  $$ where $p_n, q_n$ are homogeneous polynomials of degree $n$. Similarly write $\partial = \sum_{n \ge 2}  a_n(z_1, z_2) \partial_{z_1} + \sum_{n \ge 2} b_n(z_1, z_2) \partial_{z_2}$ with $a_n, b_n$ homogeneous of degree $n$.
 For each $m\ge2$, set $\partial_m = \sum_{n \le m}  a_n(z_1, z_2) \partial_{z_1} + \sum_{n \le m} b_n(z_1, z_2) \partial_{z_2}$, and define recursively
 $\partial^j_m(\phi) = \partial_m ( \partial^{j-1}_m(\phi))$ for any $\phi$.
 Then we have
 \begin{eqnarray}
 p_{m+1} & =&
 a_{m+1}  + HT_{m+1} \left( \sum_{j=2}^m  \frac1{j!} \, \partial^j_m(z_1) \right)
 \label{eq1}\\
 q_{m+1} & =&
 b_{m+1}  + HT_{m+1} \left( \sum_{j=2}^m  \frac1{j!} \, \partial^j_m(z_2) \right) ~,
 \label{eq2}
 \end{eqnarray}
where $HT_{m+1} (\phi)$ denotes the homogeneous part of degree $m+1$ of the power series expansion of $\phi$ in $z_1, z_2$.

Since the fixed point locus of $f$ is assumed to have simple normal crossing singularities at $x$  we may choose coordinates such that $g(z_1, z_2) = z_1^{n_1} z_2^{n_2}$ for some $n_1, n_2 \ge0$ with $n_1 + n_2 \ge 1$.

We first claim that $\partial =  z_1^{n_1} z_2^{n_2} \, \tilde{\partial}$ for some reduced formal vector field $\tilde{\partial}$, i.e. whose zero locus is zero dimensional.

Indeed by assumption $z_1^{n_1} z_2^{n_2}$ divides $p_n$ and $q_n$ for all $n \ge2$. Let us prove by induction that $z_1^{n_1} z_2^{n_2}$ divides
$a_n$ and $b_n$ for all $n$. This is true for $n=2$ since $p_2 = a_2$ and $q_2 = b_2$.
Suppose it is true for all $m \le n$. Then $z_1^{n_1} z_2^{n_2}$ divides $\partial_m$  for $2 \le m \le n$, hence $\partial^j_m(z_1)$ and $\partial^j_m(z_2)$ for all $j$, and it follows from~\eqref{eq1} and~\eqref{eq2} that $z_1^{n_1} z_2^{n_2}$ also divides $a_{n+1}$ and $b_{n+1}$ as required. Conversely if for some $m_1,m_2$ the monomial $z_1^{m_1}z_2^{m_2}$ divides $a_n$, $b_n$ for all $n$, the same argument shows it divides $p_n$ and $q_n$ as well for all $n$. This proves the claim.

The claim implies by definition that $v_C(f) = \ord_C ( \partial)$ for any curve of fixed point $C$ of $f$. Since $f^n = \exp ( n \partial)$ by construction, it follows that
$$
v_C(f^n) = \ord_C(n \partial) = \ord_C(f) = v_C(f)~.
$$
Assume now $C = \{ z_1 =0 \}$ is a curve a fixed point so that $n_1 \ge1$.
Write $\tilde{\partial} = \tilde{a} \partial_{z_1} + \tilde{b} \partial_{z_2}$ with $\tilde{a}, \tilde{b}$ having no common factors.

Suppose first $\tilde{\partial}$ is generically transverse to $C$, i.e. $\tilde{a}(0,z_2) \not \equiv 0$.
Let us compute
$\exp( \partial)(z_1) = z_1 + \sum_{j \ge 1} \frac1{j!}\, \partial^j z_1$.
Write $\partial^j z_1 = g \tilde{a}_j$ so that
\begin{equation}\label{eqbt}
\tilde{a}_1 = \tilde{a} \text{ and }
\tilde{a}_{j+1} = \tilde{\partial} g \, \tilde{a}_j + g\, \tilde{\partial} \tilde{a_j}.
\end{equation}
Then for any $j \ge1$, we get
$$
\ord_0( \tilde{a}_{j+1} (0,z_2))
=\ord_0( \tilde{\partial} g (0,z_2)) + \ord_0( \tilde{a}_{j} (0,z_2))
\ge 1 + \ord_0(\tilde{a})~.
$$
Since $ f(z_1, z_2) = (z_1 + g \sum \frac1{j!}\, \tilde{a}_j,\star)$,  we conclude first that $C$ is of type I and then that $\mu_{x,C}(f) = \ord_0 (\sum \frac1{j!}\, \tilde{a}_j (0,z_2)) = \ord_0( \tilde{a}(0,z_2))$.
This proves that
$$\mu_{x,C}(f^n) = \ord_0( n \tilde{a}(0,z_2)) = \ord_0( \tilde{a}(0,z_2)) = \mu_{x,C}(f)~.$$
Suppose next that $C$ is $\tilde{\partial}$-invariant, i.e.
$\tilde{a}(0,z_2)  \equiv 0$ but $\tilde{b}(0,z_2) \not\equiv 0$. We are now interested in
$\exp( \partial)(z_2) = z_2 + \sum_{j \ge 1} \frac1{j!}\, \partial^j z_2$.
Write $\partial^j z_2 = g \tilde{b}_j$ so that as before we have
\begin{equation}\label{eqat}
\tilde{b}_1 = \tilde{b} \text{ and }
\tilde{b}_{j+1} = \tilde{\partial} g \, \tilde{b}_j + g\, \tilde{\partial} \tilde{b_j}.
\end{equation}
Then it is not difficult to see that $C$ is of type II, and
$ \ord_0( \tilde{b}_{j+1} (0,z_2))\ge 1 + \ord_0(\tilde{b})$ for all $j \ge 1$, so that
$$\mu_{x,C}(f^n) = \ord_0( n \tilde{b}(0,z_2)) = \ord_0( \tilde{b}(0,z_2)) = \mu_{x,C}(f)~.$$
Finally let $I =\langle \tilde{a}, \tilde{b} \rangle \subset \hat{\sO}_{X,x}$ be the ideal generated by $\tilde{a}, \tilde{b}$.
Since $\tilde{\partial} \phi \in I$ for any $\phi$, by induction on $j$ we see that
$\tilde{b}_{j+1}, \tilde{a}_{j+1} \in I^2 + (g) I \subset \mathfrak{m} \cdot I \subset I$ for all $j \ge 1$.
From the identities $h_1 = \sum \frac1{j!}\, \tilde{a}_j$ and $h_2= \sum \frac1{j!}\, \tilde{b}_j$, we infer $J := \langle h_1, h_2\rangle \subset I$.

We claim that the integral closures of $I$ and $J$ are equal. Grant this claim. For any $\mathfrak{m}$-primary ideal  $\mathfrak{a} \subset \hat{\sO}_{X,x}$, we let
$e(\mathfrak{a}) = \lim_{n\to \infty} \frac1{2 n^2} \dim_{\{\mathbb{C}\}}( \hat{\sO}_{X,x}/ \mathfrak{a}^n ) $ be the (Hilbert-Samuel) multiplicity of $\mathfrak{a}$. Two ideals having the same integral closure have the same multiplicity, see~\cite{lejeune-teissier}. We thus have
$$
\delta(x,f) : = e(J) = e(I) = e  \langle \tilde{a}, \tilde{b} \rangle =   e   \langle n\tilde{a},n \tilde{b}\rangle = \delta(f^n,x)
$$
for all $n \neq 0$,
which concludes the proof of the proposition.

To prove the claim,  pick any sequence of point blow-ups $ \pi: \hat{X} \to  X$ centered above $x$ and such that the ideal sheaf $J \cdot \sO_{\hat{X}} $ is locally principal so that we can write $J \cdot \sO_{\hat{X}} =\sO_{\hat{X}} ( - \sum m_i E_i)$ where $E_i$ are exceptional  and $m_i = \ord_{E_i} (\pi^*J) \ge 1$.
Now recall that  $h_1 - \tilde{a}$  and $h_2 - \tilde{b}$ lie
in $\mathfrak{m} \cdot I$. Pick any exceptional curve $E_i$. By definition $\ord_{E_i} ( \pi^* I )
= \min\{ \ord_{E_i} ( \pi^* \tilde{a}), \ord_{E_i} ( \pi^* \tilde{b})\}$. Say
$\ord_{E_i} ( \pi^* I ) = \ord_{E_i} ( \pi^* \tilde{a})$. Then
$$\ord_{E_i} (\tilde{a} - h_1) \ge \ord_{E_i} ( \pi^* \mathfrak{m}) +
\ord_{E_i} ( \pi^* I)
> \ord_{E_i} ( \pi^* \tilde{a})$$ hence $\ord_{E_i} (\pi^* h_1)   =  \ord_{E_i} (\pi^* \tilde{a}) =\ord_{E_i} ( \pi^* I )$.
On the other hand, $\ord_{E_i}( \pi^* h_2) \ge \ord_{E_i}( \pi^* I)$, hence
$\ord_{E_i}( \pi^*J) = \ord_{E_i}( \pi^* I)$. It follows from~\cite[Th\'eor\`eme 2.1 (iv)]{lejeune-teissier} that $I$ is included in the integral closure $\bar{J}$ of $J$, and $J \subset I \subset \bar{J}$ implies $\bar{I} = \bar{J}$ as was to be shown.
\end{proof}
\begin{rmk}
In the tangent to the identity case the proof shows we have the following geometrical interpretations. Let $\mathcal{F}$ be the (formal) foliation associated to
the formal vector field $\partial$ vanishing up to order $2$ at $0$ and satisfying $ f = \exp (\partial)$. Let $ \tilde{\partial}$ be the reduced vector field associated to $\partial$.

Then a curve $C$ of fixed points is of type I, if it is generically transversal to $\mathcal{F}$,
and of type II if it is a leaf of $\mathcal{F}$. The multiplicity $v_C(f)$ is the generic order of vanishing of $\partial$ along $C$; $ \delta(f)$ is the Hilbert-Samuel multiplicity of the ideal generated by $\tilde{\partial} \mathfrak{m}$. When $C$ is smooth, then
$\mu_{x,C}(f)$ is the order of vanishing of $\tilde{\partial}|C$ (when $C$ is of type II),
or of  its dual $1$-form (when $C$ is of type I).
\end{rmk}

Finally we define
\begin{equation}
v_{(C,x)}(f):=\delta(f,x)+\sum_{C\in\Lambda(f,x)}v_{C}(f)\, \mu_{C,x}(f).
\end{equation}

\subsection{Proof of Theorem \ref{autocount}}

By Theorem \ref{finite}, there are only finitely many curves of periodic points on $X$.
In particular, one can find an integer $M$ such that any curve of periodic points in included in $\fix(f^M)$. In the sequel we shall assume that $M=1$.
Pick any curve $C \in \fix_1(f)$.  At any point $x\in C$, the differential $dF_x$ has two eigenvalues, one equal to $1$ and the other to $\lambda(x) = \det dF_x$. Since $C$ is compact, it follows that $\lambda(x)  \equiv \lambda(C)$ is a constant.
Up to replace $f$ by a suitable iterate, we may also assume that either $\lambda(C) =1$ or $\lambda(C)^n \neq 1$ for all $n\ge1$.

\proof[Step 1:] Suppose that $\fix(f)$ has only simple normal crossing singularities.
We apply Saito's formula, see~\cite[Theorem 1.2]{Iwasaki2010}, and use results from the preceding section.
\begin{thm}
Assume $f: X \to X$ is an automorphism such that all irreducible components of $\fix(f^n)$ are smooth.
Then we have
\begin{equation}\label{eqa:saito}
L(f^n)=\sum_{x\in \fix(f^n)}v_x(f^n)+\sum_{C\in X_\mathrm{I}(f^n)}\chi(C)\, v_C(f^n)+\sum_{C\in X_\mathrm{II}(f^n)}(C^2)\, v_C(f^n).
\end{equation}
\end{thm}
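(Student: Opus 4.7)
This theorem is Saito's fixed point formula as formulated by Iwasaki--Uehara in [Theorem 1.2, \cite{Iwasaki2010}], ultimately due to Saito \cite{Saito1987}. My plan is therefore to indicate that the proof is taken from these references, and sketch how the local invariants introduced in Subsection \ref{sbslsf} appear as residues of a topological/holomorphic Lefschetz trace formula.

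The starting point is the classical Lefschetz fixed point theorem, which asserts that $L(f^n) = \sum_i (-1)^i \Tr(f^{n*}|H^i(X,\mathbf{k}))$ equals, up to local contributions, an integral over the fixed locus. When $\fix(f^n)$ is the disjoint union of isolated non-degenerate points and smooth curves, one localises this trace on a tubular neighborhood of each component. For an isolated fixed point $x$ the contribution is exactly $v_x(f^n) = \delta(f^n,x)$ by the Eisenbud--Levine / Palamodov type interpretation of the local degree of $\id - f^n$ together with the definition~\eqref{eq:def-delta}; this reduces to $1$ for a simple fixed point.

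For a smooth invariant curve $C$, the normal action of $f^n$ along $C$ is generically given by multiplication by some eigenvalue $\lambda(C)$ on the normal bundle $N_{C|X}$. The local analysis of Subsection \ref{sbslsf} shows that $v_C(f^n)$ measures the order of vanishing of $f^n - \id$ transversely to $C$, while $\mu_{C,x}(f^n)$ records the tangential degeneration of the $1$-form $\omega_{f^n,x}$. For type I curves, $\omega_{f^n,x}|C$ is a non-zero section of $\Omega^1_C$, so summing $\mu_{C,x}$ along $C$ yields $\deg \Omega^1_C = -\chi(C)$; plugging into the residue formula gives the weighted Euler characteristic term $\chi(C)\, v_C(f^n)$, plus the punctual corrections $\mu_{C,x}(f^n) v_C(f^n)$ absorbed into $v_x(f^n)$. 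For type II curves $C$ is tangent to the ambient vector field $\partial_{f^n,x}$, so the relevant residue is now controlled by $N_{C|X}$ rather than $\Omega^1_C$, producing the self-intersection weighted term $(C^2)\, v_C(f^n)$.

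The hard part, and the reason Saito's original argument is delicate, is precisely the bookkeeping of these local contributions when several fixed curves meet at a point of $\fix(f^n)$ and interact with isolated fixed points: one must verify that the decomposition $v_x = \delta(f,x) + \sum_C v_C(f)\mu_{C,x}(f)$ matches exactly the local Grothendieck residue of the $0$-cycle cut out by $f^n - \id$ along the tubular deformation of $\fix(f^n)$. Since in our application the standing assumption is that $\fix(f^n)$ has simple normal crossings with smooth components, this bookkeeping is carried out in full in \cite{Saito1987} and recalled in \cite{Iwasaki2010}; I would simply appeal to their formula and refer the reader to those papers rather than reproduce the residue calculation here.
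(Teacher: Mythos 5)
Your proposal is correct and matches the paper's treatment: the paper states this result purely as a citation of Saito's fixed point formula in the form given by Iwasaki--Uehara \cite[Theorem 1.2]{Iwasaki2010}, with no proof supplied, exactly as you propose to do. The additional residue-theoretic sketch you give is a reasonable gloss but is not required, since both you and the paper ultimately defer the actual argument to \cite{Saito1987} and \cite{Iwasaki2010}.
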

Here $\chi(C)$ denotes the Euler characteristic of $C$, and $C^2$ its self-intersection.
It follows easily from our standing assumptions and from Propositions~\ref{pro1} and~\ref{pro2} that $|L(f^n) - \# \Per_n|$ is actually independent on $n$.

\proof[Step 2:] Let $S_1\subseteq X$ be the set of  singular points of curves of periodic points. This set is finite and $f$-invariant so that $f$ lifts as an automorphism $f_1$ to the blowup $\pi_1: X_1\rightarrow X$ of $X$ at all points in $S_1$. The exceptional components of $\pi_1$ are permuted by $f_1$, and we  have $$|\Tr(f^{*n}_1)_{H^{1,1}}-\Tr(f^{*n})_{H^{1,1}}|\leq \#S_1.$$ On the other hand, $\pi_*$ induces an isomorphism for $(i,j)\neq (1,1)$. So $$|L(f_1^n)-L(f^n)|=O(1).$$ There are at most $2$ isolated fixed points of $f^n$ on each exceptional component, hence $|\# \Per_n(X_1)-\# \Per_n(X)|=O(1)$. Repeating the argument finitely many times, we end up with an automorphism for which the union of all curve of periodic points has only simple normal crossing singularities. This concludes the proof.

\section{The case $\lambda_1=1$}\label{case2}
In this section, we denote by $\mathbf{k}$ an algebraically closed field of characteristic different from $2$ and $3$.

To conclude Theorem\ref{class}, we consider the case $\lambda_1=1$.
\begin{thm}(\cite{favre}, \cite{Gizatullin1980})\label{fadi}
Let $X$ be a projective smooth surface over $\mathbf{k}$, $L$ be an ample line bundle on $X$, and $f$ be a birational map of $X$. Assume $\lambda_1(f)=1$ then we are in one of the following  four cases.
\begin{points}
\item $\deg_L(f^n)$ is bounded. In this case, there is a birational model $X'$ of $X$ on which $f$ induces an automorphism and some iterate of $f$ acts on $N^1(X)$ as identity.
\item $\deg_L(f^n)\sim cn$ with $c>0$. In this case,  for some $n>0$, $f^n$ is birationally conjugated to a map of the form
$$f^n:(x,y)\in C\times \mathbb{P}^1\dashrightarrow \Big(g(x),\frac{A_1(x)y+B_1(x)}{A_2(x)y+B_2(x)}\Big),$$ where $C$ is a smooth curve, $g$ is an automorphism of $C$ and $A_1(x),B_1(x),A_2(x),B_2(x)$ are rational functions on $C$, such that $A_1(x)B_2(x)-A_2(x)B_1(x)\neq 0$.
\item $\deg_L(f^n)\sim cn^2,$ with $c>0$. In this case, there is a birational model $X'$ of $X$ and an elliptic fibration $$\pi: X'\rightarrow C$$ where $C$ is a smooth curve, such that $f$ is an automorphism on this model which preserves the fibration $\pi$.

\end{points}
\end{thm}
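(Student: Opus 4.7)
The plan is to reduce the theorem to a linear-algebraic analysis of $f^{*}$ acting on $N^{1}(X)_{\mathbb{R}}$ and then to translate the three possibilities that arise into geometric structures on $X$. First I would invoke Theorem~\ref{AS modele} to replace $X$ by a birational model on which $f$ is algebraically stable, so that $(f^{n})^{*} = (f^{*})^{n}$ and $\lambda_{1}(f)$ coincides with the spectral radius of the linear map $f^{*}$ on the finite-dimensional space $N^{1}(X)_{\mathbb{R}}$. By Theorem~\ref{Hodge}, this space carries an intersection form of signature $(1, \rho(X) - 1)$, and $f^{*}$ is an isometry of it that sends the nef cone into itself. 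The hypothesis $\lambda_{1}(f) = 1$ forces every eigenvalue of $f^{*}$ to have modulus one.

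The second step is to classify $f^{*}$ by its Jordan form. Because $f^{*}$ is an isometry of a Lorentzian form preserving a cone, a standard argument shows that all Jordan blocks of $f^{*}$ at eigenvalues other than $1$ have size one (and the eigenvalues are roots of unity), while the Jordan block structure at the eigenvalue $1$ contains a single non-trivial block of odd size at most $3$. This yields exactly three mutually exclusive possibilities: (i) $f^{*}$ has finite order, so $\deg_{L}(f^{n}) = O(1)$; (ii) the largest block at the eigenvalue $1$ has size $2$, giving $\deg_{L}(f^{n}) \sim c n$; (iii) this block has size $3$, giving $\deg_{L}(f^{n}) \sim c n^{2}$. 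The growth rates follow by expanding $\omega$ in a Jordan basis and computing $((f^{*})^{n}\omega \cdot \omega)$. In cases (ii) and (iii) the kernel of $(f^{*} - \mathrm{id})$ contains a distinguished rational nef class $\theta$ with $\theta^{2} = 0$, arising as the bottom of the Jordan chain.

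To pass from linear algebra to geometry, I would treat the three cases separately. In case (i), I would apply Gizatullin's theorem \cite{Gizatullin1980}, which states that any birational self-map of a surface whose induced action on $N^{1}$ is of finite order is birationally conjugate to an automorphism on some model $X'$; the same argument gives that an iterate of $f$ acts trivially on $N^{1}(X')$. In cases (ii) and (iii), I would show that $\theta$ is the numerical class of a fibration: since $\theta$ is nef, rational, and isotropic, a Riemann--Roch computation on the algebraically stable model produces a pencil $|m\theta|$ for $m$ sufficiently large, whose Stein factorization yields a fibration $\pi : X' \to C$ over a smooth curve with fiber class proportional to $\theta$. As $f^{*}\theta = \theta$, the map $f$ permutes fibers and descends to a birational self-map of $C$. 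The adjunction formula applied to a general fiber $F$ then forces $g(F) = 0$ in case (ii), which after passing to a ruled model gives the explicit form $(g(x), (A_{1}y + B_{1})/(A_{2}y + B_{2}))$, and $g(F) = 1$ in case (iii), giving the elliptic fibration.

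The main obstacle is the geometric realization of the invariant isotropic class $\theta$ as a genuine fibration, together with the determination of the fiber genus from the Jordan block size. This requires more than the eigenvector analysis: one has to rule out the case where $\theta$ is supported on an isolated rigid curve, which is where the isotropic nef condition $\theta^{2} = 0$ combined with the fixed-point theory of the parabolic isometry $f^{*}$ becomes essential. A secondary delicate point, specific to case (ii), is producing the explicit ruled form: after replacing $f$ by an iterate one must ensure the action on the base and on the set of multiple fibers stabilizes, so that the map becomes fiberwise Moebius with rational coefficients in a uniformizer of $C$.
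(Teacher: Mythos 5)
The paper offers no proof of this theorem --- it is quoted from Diller--Favre \cite{favre} and Gizatullin \cite{Gizatullin1980} --- and your outline does follow the broad strategy of those works (stabilize, extract an invariant nef class, realize it as a fibration, read off the fibre genus by adjunction). However, there is a genuine error at the heart of your linear-algebra step. After passing to an algebraically stable model, $f^{*}$ is \emph{not} an isometry of the intersection form unless $f$ is an automorphism: for a birational map one only has $((f^{*}\alpha)^{2})\geq(\alpha^{2})$, with strict inequality as soon as exceptional curves intervene. Your Jordan-block classification is therefore both inapplicable and internally inconsistent: you first assert that the only nontrivial block at the eigenvalue $1$ has \emph{odd} size at most $3$ (which is the correct statement for a parabolic isometry of a Lorentzian form, since Jordan blocks of even size at the eigenvalue $1$ of an orthogonal transformation occur in pairs), and then in case (ii) you invoke a single block of size $2$. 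In fact linear degree growth occurs \emph{only} when $f$ is not birationally conjugate to an automorphism, i.e.\ precisely when $f^{*}$ cannot be made an isometry; an isometric $f^{*}$ of spectral radius $1$ yields bounded or quadratic growth only. The actual argument of \cite{favre} works instead with an invariant nef class $\theta$ satisfying $f^{*}\theta=\theta$, separates the cases $(\theta^{2})>0$ (bounded growth) and $(\theta^{2})=0$, and in the isotropic case distinguishes linear from quadratic growth by comparing $f^{*}\theta$ with $f_{*}\theta$ and by analyzing the orbits of contracted curves, not by the Jordan form of an isometry.

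A second, lesser gap: passing from the isotropic invariant class $\theta$ to a genuine fibration needs more than ``a Riemann--Roch computation.'' One must show that $\theta$ is proportional to a rational class, that $(\theta\cdot K_{X})\leq 0$ so that Riemann--Roch actually produces sections of $m\theta$, and, in case (iii), one must first prove that $f$ is regularizable to an automorphism before Gizatullin's analysis of parabolic automorphisms applies. Your outline constructs the elliptic fibration but never explains why $f$ becomes an automorphism on the model, which is part of the statement being proved.
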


We consider these maps case by case.
\subsection{The case $\deg(f^n)$ is bounded}
\begin{pro}\label{pic0}Let $X$ be a projective variety, $f$ be an automorphism of $X$ which acts on $N^1(X)$ as identity. If the periodic points of $f$ are Zariski dense, Then there is an integer $N>0$ such that $f^N=\id$.
\end{pro}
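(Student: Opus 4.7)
The plan is to realize $f$, up to replacing it by an iterate, as an element of a connected commutative algebraic group acting on $X$, and then use the density of periodic points to force this group to be trivial.

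First, I would invoke the fact that the subgroup $\Aut_{[L]}(X)\subset \Aut(X)$ preserving the numerical class of a fixed ample line bundle $L$ is a group scheme of finite type over $\mathbf{k}$ (Matsusaka's theorem on polarized automorphism groups, combined with the finiteness of the N\'eron-Severi torsion). Since $f$ acts trivially on $N^1(X)$ it preserves $[L]$, so $f\in \Aut_{[L]}(X)$, and in particular some iterate $f^{M}$ lies in the identity component $\Aut^0(X)$. As replacing $f$ by an iterate preserves both the density of periodic points and the conclusion to be proven, I may assume from the start that $f\in\Aut^0(X)$.

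Next, I would introduce the Zariski closure $G:=\overline{\langle f\rangle}\subset \Aut^0(X)$; this is a closed commutative algebraic subgroup, whose identity component I denote by $G^0$. The goal is reduced to showing $G^0=\{e\}$, for then $G$ is finite and $f$ has finite order.

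The heart of the argument is the claim that every periodic point of $f$ is fixed by all of $G^0$. Given a periodic point $x$ of period $n$, the stabilizer $\mathrm{Stab}_G(x)$ is a closed subgroup containing $f^n$, hence it contains the closed subgroup $\overline{\langle f^n\rangle}$. In the commutative algebraic quotient $G/\overline{\langle f^n\rangle}$, the class of $f$ generates a subgroup that is simultaneously dense (because $\langle f\rangle$ is dense in $G$) and cyclic of order dividing $n$, hence finite. A finite dense subset of an algebraic group being closed, the quotient must itself be finite, so $\overline{\langle f^n\rangle}$ has finite index in $G$ and $G^0\subset \overline{\langle f^n\rangle}\subset \mathrm{Stab}_G(x)$. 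Consequently $\fix(G^0)\subset X$ is a closed subset containing every periodic point of $f$, and by Zariski density must equal $X$. Since $\Aut^0(X)$ acts faithfully on $X$, we conclude $G^0=\{e\}$.

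The main obstacle I anticipate is the first step: identifying a clean and sufficiently general reference for the finite-type statement on $\Aut_{[L]}(X)$ valid in arbitrary characteristic, since this input is not developed in the excerpt. Once granted, the remaining argument is entirely formal manipulation with closed subgroups of a commutative algebraic group, combined with the faithfulness of $\Aut^0(X)\curvearrowright X$.
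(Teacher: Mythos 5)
Your proof is correct, and while the first step coincides with the paper's (the paper establishes that $\Aut_\omega(X)$ is of finite type by noting that the Hilbert polynomial of the graph $\Gamma_g$ depends only on the numerical class $[L\otimes g^*L]=2\omega$, so all such graphs live in a single finite-type Hilbert scheme --- essentially the content of the Matsusaka-type statement you invoke), the second half takes a genuinely different route. The paper stays entirely inside the Noetherian topology of $\Aut_0(X)$: it sets $S_n=\fix(f^{n!})$, forms the decreasing chain of closed subsets $F_n=\{g\in\Aut_0(X):g|_{S_n}=\id\}$, lets it stabilize at $F_l=\bigcap_n F_n$, and observes that $f^{l!}\in F_l$ then fixes the dense union $\bigcup_n S_n$, hence is the identity. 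You instead pass to the algebraic group $G=\overline{\langle f\rangle}$ and show $G^0$ acts trivially; your key step that $\overline{\langle f^n\rangle}$ has finite index in $G$ (hence contains $G^0\subset\mathrm{Stab}_G(x)$) is sound, and can even be seen without forming the quotient $G/\overline{\langle f^n\rangle}$: the finitely many closed cosets $f^i\,\overline{\langle f^n\rangle}$, $0\le i<n$, cover the dense subgroup $\langle f\rangle$ and hence all of $G$. The paper's argument is more elementary (only descending chains of closed sets), whereas yours is more structural and makes transparent \emph{why} the statement holds --- the connected group generated by $f$ must fix every periodic point --- at the modest cost of invoking standard facts about closed subgroups, identity components, and (in your formulation) quotients of commutative algebraic groups.
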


\proof We denote $\Aut(X)$ the automorphism group of $X$. Let $\omega\in N^1(X)$ be an ample class, we denote $\Aut_{\omega}(X)=\{g\in \Aut(X)| g^*\omega=\omega \}$ be the subgroup of $\Aut(X)$. Let $L\in \Pic(X)$ such that $[L]=\omega \in N^1(X)$. For any $g\in \Aut_{\omega}(X)$, let $\Gamma_g\subseteq X\times X$ be the graph of $g$. We denote $\pi_1$, $\pi_2$ the projections to the first and second factors. Then $\pi_1^*L\otimes\pi_2^*L$ is ample on $X\times X$. We may consider the Hilbert polynomial $P_g(n)$ of $\Gamma_g$, $$P_g(n)=\chi(\Gamma_g, (\pi_1^*L\otimes\pi_2^*L)^{\otimes n})=\chi(X, (L\otimes g^*L)^{\otimes n}).$$ By Hirzebruch-Riemann-Roch theorem, we see that $\chi(X, (L\otimes g^*L)^{\otimes n})$ is a polynomial of $n$ whose coefficients only depend on the numerical class $[L\otimes g^*L]=2\omega \in N^1(X)$. So $P_g$ is independent of $g$, we may denote it by $P$. Let $Y$ be the Hilbert Scheme of $X\times X$ with Hilbert polynomial $P$. We see that $Y$ is of finite type and $\Aut_{\omega}(X)$ is an open subvariety of $Y$. Let $\Aut_{0}(X)$ be the irreducible component of $\Aut_{\omega}(X)$ containing $\id$. We suppose that $\Aut_{\omega}(X)$ has $M$ components, then for any $g\in \Aut_{\omega}(X)$,  $g^M\in \Aut_{0}(X)$. Since $f$ acts on $N^1(X)$ as identity, $f\in \Aut_{\omega}(X)$. Then $f^M\in \Aut_{0}(X)$. We may assume that $f\in \Aut_{0}(X)$. Let $S_n$ be the set of fixed points of $f^{n!}$ for $n\geq 1$. Then $S_n\subseteq S_{n+1}$ for any $n\geq 1$.  Let $F_n=\{g\in \Aut_{0}(X)|g_{|S_n}=\id\}$. Then $F_n$ is closed and $F_{n+1}\subseteq F_n$ for any $n\geq 1$. So there is an integer $l$ such that $F_{l}=\bigcap_{n\geq 1}F_n.$ Let $N=l!$, then $f^N\in F_{l}=\bigcap_{n\geq 1}F_n$. So $f^N_{|S_n}=\id$ for any $n\geq 1.$ Since the Zariski closure of $\bigcup_{n\geq 1} S_n$ is $X$, we conclude $f^N=\id$.

\endproof

\begin{pro}\label{cbounded}
Let $X$ be a projective smooth surface over $\mathbf{k}$, $L$ be an ample line bundle on $X$ and $f$ be a birational map of $X$, such that $\deg_L(f)$ is bounded. If the non critical periodic points of $f$ are Zariski dense, then there is an integer $n>0$ such that $f^n=\id$.

\end{pro}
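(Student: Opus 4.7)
The plan is to combine Theorem \ref{fadi}(i) with Proposition \ref{pic0} via a birational conjugation argument.

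First I would apply Theorem \ref{fadi}(i) to the hypothesis $\deg_L(f^n)$ bounded. This produces a birational model $\pi : X' \dashrightarrow X$ (with $X'$ smooth projective) on which the conjugate $f' := \pi^{-1}\circ f\circ \pi$ is a genuine automorphism of $X'$, and moreover an integer $k>0$ such that $(f')^{k}$ acts as the identity on $N^{1}(X')$. Replacing $f$ by $f^{k}$ and $f'$ by $(f')^{k}$ (it suffices to prove that some power of this iterate is the identity), we may assume from the outset that $f'\in \Aut(X')$ and that $f'^{\,*}=\id$ on $N^{1}(X')$.

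Next I would transport the density hypothesis from $X$ to $X'$. The birational map $\pi$ is a biregular isomorphism between Zariski open subsets $U\subseteq X'$ and $V\subseteq X$, and it conjugates $f'|_{U}$ to $f|_{V}$ (after possibly shrinking). The set $\mathcal{P}$ of non-critical periodic points of $f$ intersects the open set $V\setminus \bigcup_{n}f^{-n}(X\setminus V)$ in a Zariski dense subset (its complement is contained in a countable union of proper Zariski closed subsets, but in fact only finitely many indeterminacy curves are relevant because $f$ is an automorphism on $X'$), and under $\pi^{-1}$ these points map to periodic points of $f'$ on $X'$. Hence the periodic points of the automorphism $f'$ are Zariski dense in $X'$.

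Now I can invoke Proposition \ref{pic0} applied to $f'$: since $f'$ is an automorphism of the projective variety $X'$, acts trivially on $N^{1}(X')$, and has Zariski dense periodic points, there exists $N>0$ with $(f')^{N}=\id_{X'}$. Because $f$ and $f'$ are conjugate in the group of birational self-maps of $X$, this forces $f^{N}=\id$ (more precisely $f^{kN}=\id$ after undoing the initial replacement of $f$ by $f^{k}$), which is the desired conclusion.

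The only delicate point is the density transfer in the middle step: one must check that the indeterminacy and exceptional loci of $\pi$ and of the iterates of $f$ only remove a proper Zariski closed subset from $\mathcal{P}$, so that its image in $X'$ remains Zariski dense. This is straightforward since the bad locus is a finite union of curves (the exceptional divisor of $\pi$ is a finite union of curves, and $f$ being birationally an automorphism means its indeterminacy locus is finite), and a Zariski dense subset of a surface cannot be contained in such a finite union. The substantial content of the proposition is already packaged into Theorem \ref{fadi}(i) and Proposition \ref{pic0}; the present statement is essentially their combination.
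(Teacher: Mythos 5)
Your proposal is correct and follows essentially the same route as the paper: Theorem \ref{fadi}(i) reduces to an automorphism acting trivially on $N^1$, and Proposition \ref{pic0} then finishes the argument. The paper simply says ``we may assume'' for the conjugation step, whereas you spell out the (routine) transfer of Zariski density across the birational model; no substantive difference.
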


\proof
By Theorem \ref{fadi}, we may assume that $f$ is an isomorphism and acts on $N^1(X)$ as identity. Then we conclude that there is an integer $n>0$ such that $f^n=\id$ by Proposition \ref{pic0}.
\endproof

\subsection{The linear growth case}

\begin{pro}\label{linearg}
Let $X$ be a projective smooth surface over $\mathbf{k}$, $L$ be an ample line bundle on $X$ and $f:X\rightarrow X$ be a birational map on $X$, such that $\deg_L(f^n)=cn+O(1),$ where $c>0$ . Then the non-critical periodic points are Zariski dense if and only if for some $n>0$, $f^n$ is birationally conjugated to a map of the form
$$f^n:(x,y)\in C\times \mathbb{P}^1\dashrightarrow \Big(x,\frac{A_1(x)y+B_1(x)}{A_2(x)y+B_2(x)}\Big)\,\,\,\,\,\,\,\,\,\,\,\,\,\,(*),$$ where $C$ is a smooth curve and $A_1(x),B_1(x),A_2(x),B_2(x)$ are rational functions on $C$, such that $A_1(x)B_2(x)-A_2(x)B_1(x)\neq 0$.

\end{pro}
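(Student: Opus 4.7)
The plan is to apply Theorem~\ref{fadi}(ii), which (for some $n>0$) puts $f^n$ birationally conjugate to a fibered map $F(x,y)=(g(x),\phi_x(y))$ on $C\times\mathbb{P}^1$, with $g\in\Aut(C)$ and $\phi_x\in\PGL_2$ a fiberwise family of Möbius transformations. Since the non-critical periodic points of $f$ correspond birationally to those of $F$ (the indeterminacy and exceptional loci being proper closed subsets), it suffices to study $F$ directly, and the proposition reduces to showing that the density of $\mathcal{P}(F)$ is equivalent to $g^N=\id$ for some $N$.

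For the direction $(\Rightarrow)$, assume $\mathcal{P}(F)$ is Zariski dense. The projection $\pi\colon C\times\mathbb{P}^1\to C$ satisfies $\pi\circ F=g\circ\pi$, so $\pi(\mathcal{P}(F))\subseteq\mathcal{P}(g)$ is Zariski dense in $C$. I will then argue that density of $\mathcal{P}(g)$ forces $g$ to be torsion by a case analysis on the genus of $C$: if $C=\mathbb{P}^1$, a non-torsion Möbius transformation shares its (at most two) fixed points with all of its iterates, giving only finitely many periodic points; if $C$ is elliptic, some iterate of $g$ is a translation, and a non-torsion translation has no periodic points; if $\mathrm{genus}(C)\geq 2$, $\Aut(C)$ is already finite by Hurwitz. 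Hence $g^N=\id$ for some $N$, and replacing $n$ by $nN$ puts $f^{nN}$ in the stated form $(x,\phi^N_x(y))$.

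For $(\Leftarrow)$, assume $f^n$ is conjugate to $F(x,y)=(x,\phi_x(y))$. The key invariant is $t(x):=\mathrm{tr}^2(\phi_x)/\det(\phi_x)\in\mathbf{k}(C)$, which classifies the $\PGL_2$-conjugacy class of a (semisimple) $\phi_x$. If $t$ were constant, then after conjugating by a suitable element of $\PGL_2(\mathbf{k}(C))$, possibly pulled back from a finite étale cover of $C$, one can make $\phi_x$ a constant Möbius transformation (in the parabolic subcase $t\equiv 4$, a translation $y\mapsto y+c(x)$). In either situation the bidegree of $F^k$ on $C\times\mathbb{P}^1$ remains bounded under iteration, contradicting $\deg_L(f^{nk})\sim cnk$. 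Hence $t\colon C\dashrightarrow\mathbb{A}^1$ is dominant. Since $\mathbf{k}$ has characteristic $\neq 2,3$, the set $T\subset\mathbf{k}$ of trace values $\zeta+\zeta^{-1}+2$ for roots of unity $\zeta\in\mathbf{k}^\times$ is infinite; every $\phi_x$ with $t(x)\in T$ is torsion, so the whole fiber $\{x\}\times\mathbb{P}^1$ is periodic under $F$. As $t^{-1}(T)$ is an infinite subset of the curve $C$, it is Zariski dense, so $\bigcup_{x\in t^{-1}(T)}\{x\}\times\mathbb{P}^1\subseteq\mathcal{P}(F)$ is Zariski dense in $C\times\mathbb{P}^1$. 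The critical locus being a proper closed subset, non-critical periodic points are dense as well. The main obstacle will be the constant-$t$ step of $(\Leftarrow)$: verifying that constant conjugacy invariant actually descends $\phi_x$ to a constant (or translation) via a birational conjugation requires handling a $\PGL_2$-torsor on $C$ together with the parabolic exceptional case, and the possible passage to a finite cover must be shown harmless for the degree contradiction.
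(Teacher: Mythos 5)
Your proposal is correct and follows essentially the same route as the paper: reduce via Theorem \ref{fadi}(ii), deduce torsion of the base automorphism from density of its periodic points, and in the converse direction use the conjugacy invariant $\mathrm{tr}^2/\det$ — ruling out the constant case (parabolic subcase and a finite cover for the non-parabolic subcase) by bounded degree growth, then producing infinitely many torsion fibers from roots of unity. The only differences are minor elaborations (the genus case analysis for the base, flagging the torsor/cover subtlety), which the paper handles the same way but more tersely.
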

\proof
Suppose first the set of non-critical periodic points is Zariski dense.
By Theorem \ref{fadi}, we may assume that $$f=\Big(g(x),\frac{A_1(x)y+B_1(x)}{A_2(x)y+B_2(x)}\Big),$$ where $g$ is an automorphism of $C$ and $A_1(x),B_1(x),A_2(x),B_2(x)$ are rational functions on $C$, such that $A_1(x)B_2(x)-A_2(x)B_1(x)\neq 0$.  Since the non-critical periodic points of $f$ are Zariski dense, then the periodic points of $g$ are Zariski dense. But $g$ is an automorphism of a projective curve, so there is an integer $n$, such that $g^n=id$. Replacing $f$ by a suitable iterate, we may assume that $g=\id$, then $$f=\Big(x,\frac{A_1(x)y+B_1(x)}{A_2(x)y+B_2(x)}\Big).$$

Conversely suppose $\deg_L(f^n)\rightarrow \infty$ and $f$ can be written under the form (*). We denote the function field of $C$ by $K$. Let $$T(x)=(A_1(x)+B_2(x))^2/(A_1(x)B_2(x)-A_2(x)B_1(x))$$ and let $t_1,t_2\in \overline{K}$ be the two eigenvalues of the matrix
$$
\left(\begin{array}{ccccccccc}
A_1(x) & B_1(x) \\
A_2(x) & B_2(x)
\end{array}\right).$$
If $(A_1(x)+B_2(x))^2/(A_1(x)B_2(x)-A_2(x)B_1(x))\in \mathbf{k}$,
then $$t_1/t_2+t_2/t_1+2=(A_1(x)+B_2(x))^2/(A_1(x)B_2(x)-A_2(x)B_1(x))\in \mathbf{k}.$$ Then $t_1/t_2\in \mathbf{k}$, since $\mathbf{k}$ is algebraically closed.

If $t_1=t_2$, then $t_1=t_2=(A_1(x)+B_2(x))/2\in K$. So we may replace $A_i(x),B_i(x)$ by $2A_i(x)/(A_1(x)+B_2(x)),2B_i(x)/(A_1(x)+B_2(x))$. So we may assume that $t_1=t_2=1$. Thus by changing the coordinate $f$ can be written as $(x,y+B(x))$ where $B(x)\in K$. Then for any $k>0$ $f^k=(x,y+kB(x))$. Then $\deg_L f^k$ is bounded. It is a contradiction.

If $t_1\neq t_2$, then $L=K(t_1)$ is a finite field extension over $K$. There is a curve $B\rightarrow C$ corresponding to this field extension. Since $f$ acts on the base curve $C$ trivially, it induces a map $\widetilde{f}$ on $\mathbb{P}^1\times_{C}B$. We see that the non-critical periodic points are also dense. Since $t_1, t_2$ are rational functions on $B$, by changing the coordinate $\widetilde{f}$ can be written as $(x,(t_1/t_2)y)$. Then $\deg_L f^k$ is bounded. It is a contradiction.

Then $T(x)$ is non-constant, for any $n>0$, pick a primitive root $r_n$ of unit with order $n$. Then there is at least one point $x\in C$ such that $T(x)=2r_n+2$. Then in some coordinate, $f$ acts on this fiber as $y\rightarrow r_n y$. So $f$ has $n$ periodic points on this fiber.  So the periodic points of $f$ are Zariski dense.

\endproof

\subsection{The quadratic growth case}

\begin{pro}\label{n^2}
Let $X$ be a projective smooth surface over $\mathbf{k}$ and an  elliptic fibration $$\pi: X\rightarrow C$$ where $C$ is a smooth curve.  Let $L$ be an ample line bundle on $X$ and $f$ be an automorphism of $X$ preserving this fibration such that $\deg_L(f^n)\rightarrow \infty$. Then the periodic points of $f$ are Zariski dense if and only if $f^N$ acts on $C$ as identity for some $N>0$.
\end{pro}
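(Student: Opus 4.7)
Since $\pi$ is dominant and $\pi \circ f = \bar f \circ \pi$, the images under $\pi$ of periodic points of $f$ are periodic points of the induced automorphism $\bar f : C \to C$; Zariski density on $X$ thus forces Zariski density of the periodic set of $\bar f$ in $C$. I would check that any automorphism of a smooth projective curve with Zariski-dense periodic set must be of finite order: if $g(C)\geq 2$ then $\Aut(C)$ is itself finite; if $g(C)=1$, after choosing an origin $\bar f = \tau_s\circ\sigma$ with $\sigma\in\Aut(C,0)$ of finite order, so some iterate is a translation, and such a translation has dense periodic points only if it is the identity; if $C=\P^1$, $\bar f$ is a M\"obius transformation and a non-torsion one has at most two fixed points, with the same bound on each of its iterates. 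Hence $\bar f^N=\id_C$ for some $N>0$.

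\textbf{Sufficiency---reduction.} Replacing $f$ by $f^N$, I may assume $\bar f=\id_C$, so $f$ preserves every fiber of $\pi$. Consider the generic fiber $E_\eta$, a smooth genus-one curve over $K:=\mathbf{k}(C)$; passing to a finite \'etale cover of $C$ if needed (which does not affect Zariski density in $X$), I may assume $E_\eta$ has a $K$-rational point, making it an elliptic curve with zero section $O$. Its automorphism group $\Aut_K(E_\eta)$ is an extension of the finite group $\Aut(E_\eta,O)$ (of order at most $6$ since $\Char\mathbf{k}\neq 2,3$) by $E_\eta(K)$; after replacing $f$ by a further power, $f|_{E_\eta}=\tau_t$ is translation by some $t\in E_\eta(K)$. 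The assumption $\deg_L(f^n)\to\infty$ prevents $f$ from being torsion, so $t$ has infinite order. For each $n\geq 1$ set $W_n:=\{c\in C:nt(c)=0\text{ in }X_c\}$; for $c\in W_n$ over which $\pi$ is smooth, $f^n$ acts as the identity on the entire fiber $X_c$. So $\pi^{-1}(W)$, with $W:=\bigcup_n W_n$, is a union of periodic fibers of $f$, and the Zariski density of the periodic set of $f$ reduces to showing that $W$ is infinite in $C$.

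\textbf{Intersection estimate and main obstacle.} The sections $s_n:=nt\colon C\to X$ cut out distinct irreducible curves $s_n(C)\subset X$ (distinct precisely because $t$ is non-torsion), and $W_n=s_n(C)\cap O(C)$. By Shioda's height formula on the Jacobian elliptic surface,
\begin{equation*}
(s_n(C)\cdot O(C))_X = n^2\,\hat h(t) + O(1),
\end{equation*}
where $\hat h$ is the N\'eron--Tate height on $E_\eta(K)$; since $t$ is non-torsion, $\hat h(t)>0$, so the intersection numbers tend to infinity. The local intersection multiplicity at a point $(c,0_c)\in s_n(C)\cap O(C)$ can be computed in the formal group at $0_c$: since multiplication by $n$ is an automorphism of the formal group for $n$ coprime to $\Char\mathbf{k}$, this multiplicity equals $\ord_c(t-t(c))$, independent of $n$. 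Consequently $|W_n|\to\infty$ and $W$ is infinite in $C$, which concludes the proof. The main obstacle here is the quantitative estimate $(s_n\cdot O)\to\infty$, which rests on the positivity of the canonical height of a non-torsion section in the Mordell--Weil group of the generic fiber, and on a careful match between this growth and the (bounded) local intersection multiplicities coming from the formal group law.
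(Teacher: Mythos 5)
Your overall route in the sufficiency direction is genuinely different from the paper's. The paper argues by contradiction: assuming the periodic points are not dense, it takes a general member $S\in |L|$, shows that all intersection points of $S$ with $f^n(S)$ are concentrated at a fixed finite set $H$ of fixed points, and computes in local coordinates (via the normal form $f^{-n}=(z_1,z_2+z_1^i(na+b_n))$) that the local multiplicities are independent of $n$, so $\deg_L(f^n)=(S\cdot f^n(S))$ stays bounded --- contradicting the hypothesis. You instead argue directly on the Mordell--Weil group of the generic fiber and use Shioda's height formula to show that the sections $nt$ meet the zero section more and more. Both are legitimate strategies, but yours has a genuine gap at its central step.

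The gap is the implication ``$t$ has infinite order $\Rightarrow \hat h(t)>0$.'' Positive definiteness of the N\'eron--Tate height modulo torsion fails exactly in the isotrivial case: if $X$ is (after a finite cover) a product $E_0\times C$ and $t\in E_0(\mathbf{k})\subseteq E_\eta(K)$ is a constant non-torsion point, then $t$ has infinite order but $\hat h(t)=0$, the sections $s_n(C)=C\times\{nt\}$ are disjoint from $O(C)$, every $W_n$ is empty, and indeed $f=\id\times\tau_t$ has no periodic points at all. (Shioda's positivity theorem is stated for elliptic surfaces with at least one singular fiber; in general positivity only holds modulo the image of the $K/\mathbf{k}$-trace.) Your hypothesis $\deg_L(f^n)\to\infty$ does exclude this situation --- a constant translation acts trivially on $N^1$, so its degrees are bounded --- but you only used that hypothesis to conclude that $t$ is non-torsion, which is strictly weaker than what you need. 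To repair the argument you must show directly that $\deg_L(f^n)\to\infty$ forces $\hat h(t)>0$, e.g.\ by treating separately the case where the (relatively minimal model of the) fibration is smooth/isotrivial and $t$ lies in the trace part modulo torsion, and checking that this forces bounded degree growth. Two smaller points: the cover of $C$ trivializing a multisection need not be \'etale (and the base-changed surface must be desingularized and made relatively minimal before applying Shioda's formula); and the claim that the local intersection multiplicities of $s_n(C)$ with $O(C)$ are bounded uniformly in $n$ and in the point deserves a more careful justification than the one-line appeal to the formal group, since the points $c\in W_n$ and the torsion values $t(c)$ vary with $n$. The necessity direction is fine and matches the paper.
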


\proof
Let $\overline{f}$ be the map of the base space $C$ induced by $f$.

Suppose first that the periodic points of $f$ are Zariski dense. Then the periodic points of $\overline{f}$ are Zariski dense also. So there is an integer $N>0$, such that $\overline{f}^N=\id$.

Conversely suppose that $\deg_L(f^n)\rightarrow \infty$ and $\overline{f}=\id$. Since $f$ is an elliptic fibration,  then all but finitely many fibers are elliptic curves.

By [Theorem 10.1 III, \cite{Silverman1986}], the automorphism group of an elliptic curve (as an algebraic group) has order
divides $24$
so that we may replace $f$ by $f^{24}$. Then the restriction of $f$ to any smooth fiber is a translation. On any elliptic fiber $E_x$, if $f_{|E_x}$ has a fixed point if and only if $f_{|E_x}$ is identity.

We assume that the periodic points of $f$ are not Zariski dense. Then there is a set $T=\{x_1,\cdots,x_m\}\in \mathbb{P}^1$, such that for any $x\in C-T$, $E_x$ is a smooth elliptic curve and $f$ has no periodic points in $E_x.$

 By replacing $L$ by $L^n$ for $n$ sufficiently large, we may assume that $L$ is very ample. By Bertini's Theorem (see \cite{Hartshorne1977}),  we can find a general section $S$ of $L$ such that for any $y\in T$, the intersection of $S$ and $F_y$ is transverse, and these intersection points are smooth in $S$ and in $F_y$.

   Let $H$ be the set of periodic points which lies in $S$. Then $H\subseteq S\bigcap (\bigcup_{x\in T}E_x)$ is a finite set.  By replacing $f$ by $f^l$ for some $l$, we may assume that any $x\in H$ is a fixed point. Let $S^n=f^n(S)$ for $n\geq 0$. Then for any $ i\neq j$, $S^i\bigcap S^j=H$. Let $x$ be any point in $H$, $\pi(x)=y$. Since $S$, $F_y$ are smooth at $x$ and the intersection of $S$ and $E_y$ at $x$ is transverse.
 Then let $A_x$ be the completion of the local ring at $x$, we may identify $A_x$ to $\mathbf{k}[[z_1,z_2]]$. In some coordinate  we may assume that $S =(z_2=0)$ and $E_y=(z_1=0)$ in local. Then $f^{-1}$ can be written as
 $$f^{-1}=(z_1,z_2+h(z_1,z_2))$$
 where $h(0,0)=0$. Since $h(z_1,z_2)\neq 0$ and $h(0,0)=0$, $h(z_1,z_2)=0$ defines a curve $D$ in local and any points in $D$ is fixed by $f^{-1}$. Then $x$ is not isolate. But there is an open set $U$ such that there is no fixed point in $U-E_y$, then $h$ has the form $z_1^i(a+b(z_1,z_2))$ where $i\geq 1$, $a\neq 0$ and $b(0,0)=0$.

 \begin{lem}\label{n^22}
One can find local coordinates $(z_1, z_2)$ such that for any integer $n$, one has
$$f^{-n}=(z_1,z_2+z_1^i(na+b_n(z_1,z_2))) $$
 where $b_1=b$ and $b_n(0,0)=0$.
 \end{lem}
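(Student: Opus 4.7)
The plan is to prove this by induction on $n \geq 1$, with no further change of coordinates needed: the coordinates $(z_1, z_2)$ chosen just before the lemma (so that $S = (z_2 = 0)$ and $E_y = (z_1 = 0)$) will work as they stand. The base case $n = 1$ is exactly the given expansion $f^{-1} = (z_1, z_2 + z_1^i(a + b(z_1,z_2)))$ with $b(0,0) = 0$.

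For the inductive step, observe that $f^{-1}$ preserves $z_1$, hence so does every iterate $f^{-n}$. Assuming $f^{-n}(z_1,z_2) = (z_1, z_2 + z_1^i(na + b_n(z_1,z_2)))$ with $b_n(0,0) = 0$, I would compose $f^{-(n+1)} = f^{-1} \circ f^{-n}$ to obtain
\begin{align*}
f^{-(n+1)}(z_1,z_2) &= \bigl(z_1,\, z_2 + z_1^i(na + b_n(z_1,z_2)) + z_1^i\bigl(a + b(z_1, z_2 + z_1^i(na + b_n))\bigr)\bigr)\\
&= \bigl(z_1,\, z_2 + z_1^i((n+1)a + b_{n+1}(z_1,z_2))\bigr),
\end{align*}
where $b_{n+1}(z_1,z_2) := b_n(z_1,z_2) + b(z_1,\, z_2 + z_1^i(na + b_n(z_1,z_2)))$. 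Since $b_n(0,0) = 0$ and $b(0,0) = 0$, we see $b_{n+1}(0,0) = 0$, completing the induction.

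If the statement is meant to include negative $n$ as well, I would handle it identically after checking that $f$ itself has the analogous local form. Writing $f = (z_1, z_2 + g'(z_1,z_2))$ (since $f$ preserves $z_1$) and imposing $f \circ f^{-1} = \id$ gives $g'(z_1, z_2 + z_1^i(a + b)) = -z_1^i(a + b)$; specialising at $z_1 = 0$ shows $g'(0,z_2) \equiv 0$, so $z_1^i$ divides $g'$, and evaluating the leading term yields $g'(z_1,z_2) = z_1^i(-a + \tilde b(z_1,z_2))$ with $\tilde b(0,0) = 0$. Applying the previous induction to $f$ then supplies the required form for $f^n$, $n \geq 1$.

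The only potential obstacle is notational bookkeeping of the composite remainder terms; there is no conceptual difficulty, because preservation of $z_1$ by $f^{-1}$ collapses the induction to a one-variable recursion in the second coordinate.
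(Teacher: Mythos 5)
Your proof is correct and follows essentially the same route as the paper: induction via $f^{-(n+1)}=f^{-1}\circ f^{-n}$, with the same recursion $b_{n+1}(z_1,z_2)=b_n(z_1,z_2)+b\bigl(z_1,\,z_2+z_1^i(na+b_n(z_1,z_2))\bigr)$ and the observation that $b_{n+1}(0,0)=0$. Your additional remark handling negative exponents (i.e., positive iterates of $f$) is a reasonable supplement that the paper omits, but the core argument is identical.
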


\proof[Proof of Lemma \ref{n^22}]
We proceed by induction.
If it is true for $n$, then we have $$f^{-(n+1)}=f^{-1}\circ f^{-n}.$$ So
$$f^{-n-1}=(z_1,z_2+z_1^i(na+b_n(z_1,z_2))+z_1^i(a+b(z_1,z_2+z_1^i(na+b_n(z_1,z_2))))))$$
$$=(z_1,z_2+z_1^i((n+1)a+b_{n+1}(z_1,z_2))) $$
where $b_{n+1}=b_n(z_1,z_2)+b(z_1,z_2+z_1^i(na+b_n(z_1,z_2)))$. Then $b_{n+1}(0,0)=0$.

\endproof

For $n$ not divided by the characteristic of $\mathbf{k}$, we can compute $S^n$ in local coordinates. By the lemma above, we know that $S^n$ is defined by the local equation $z_2+z_1^i(na+b_n(z_1,z_2))=0$. So all the intersections of $S$ and $S^n$ at $x$ are $i$. This $i$ is just dependent of $x\in H$ and independent of $n$. We denotes it by $i_x$, then $$(f^*L\cdot L)=(S\cdot S^n)=\Sigma_{x\in H}i_x$$ which is independent of $n$. It contradicts the fact that $\deg_L(f^n)=(f^*L\cdot L)\rightarrow\infty$. So the periodic points of $f$ are Zariski dense.

\bibliography{dd}

\end{document}